 \newtheorem{thm}{Theorem}[section]
 \newtheorem{cor}[thm]{Corollary}
 \newtheorem{lem}[thm]{Lemma}
 \newtheorem{prop}[thm]{Proposition}
 \theoremstyle{definition}
 \newtheorem{defn}[thm]{Definition}
 \theoremstyle{remark}
 \newtheorem{rem}[thm]{Remark}
 \newtheorem*{ex}{Example}
 \numberwithin{equation}{section}
\def\Rn{{\mathbb{R}^n}}
\def\i{\infty}
\def\L1loc{L_{\Phi}^{\rm loc}(\Rn)}
\def\dual{\,^{^{\complement}}\!}
\newcommand{\es}{\mathop{\rm ess \; inf}\limits}
\begin{document}

\begin{center}
\Large \bf Calder\'{o}n-Zygmund operators and their commutators on generalized weighted Orlicz-Morrey spaces
\end{center}

\

\centerline{\large F. Deringoz$^{a}$, V.S. Guliyev$^{b,c,d}$, M.N. Omarova$^{c,e}$, M.A. Ragusa$^{b,f,}$\footnote{
{correspondent author : maragusa@dmi.unict.it (M. Ragusa)
\\
The research of V. Guliyev and M. Ragusa was partially supported by the Ministry of Education and Science of the Russian Federation (Agreement number: 02.a03.21.0008).
}
\\
E-mail: vagif@guliyev.com (V. Guliyev), deringoz@hotmail.com (F. Deringoz), mehribanomarova@yahoo.com (M. Omarova).}}

\

\centerline{$^{a}$ \small  \it Department of Mathematics, Ahi Evran University, Kirsehir, Turkey}

\centerline{$^{b}$\it \small RUDN University, 6 Miklukho-Maklaya St., Moscow,
Russian Federation 117198}

\centerline{$^{c}$\it \small Institute of Mathematics and Mechanics of NAS of Azerbaijan, AZ1141 Baku, Azerbaijan}

\centerline{$^{d}$\it Institute of Applied Mathematics, Baku State University, AZ 1148 Baku, Azerbaijan}

\centerline{$^{e}$\it Baku State University, AZ1141 Baku, Azerbaijan}

\centerline{$^{f}$\it Dipartimento di Matematica, Universit\'{a} di Catania, Catania, Italy}

\

\begin{abstract}
In this paper, we obtain the necessary and sufficient conditions for the weak/strong boundedness of the Calder\'{o}n-Zygmund operators in generalized weighted Orlicz-Morrey spaces.
We also study the boundedness of the commutators of Calder\'{o}n-Zygmund operators on these spaces. Moreover, the boundedness of Calder\'{o}n-Zygmund operators in the vector-valued setting is given.
\end{abstract}

\

\noindent{\bf AMS Mathematics Subject Classification:} $~~$ 42B20, 42B25, 42B35, 46E30

\noindent{\bf Key words:} {generalized weighted Orlicz-Morrey space; Calder\'{o}n-Zygmund operators; commutator; vector-valued inequalities}

\

\section{Introduction}

The classical Morrey spaces were introduced by
Morrey \cite{Morrey} to study the local behavior of solutions to second-order
elliptic partial differential equations. Moreover, various Morrey spaces are
defined in the process of study. Mizuhara \cite{Miz} and Nakai \cite{Nakai}
introduced generalized Morrey spaces $M^{p,\varphi}(\Rn)$ (see, also \cite{GulJIA}); Komori and Shirai \cite{KomShi} defined weighted Morrey spaces $L^{p,\kappa}(w)$; Guliyev \cite{GulEMJ2012} gave a concept of the generalized weighted Morrey spaces $M^{p,\varphi}_{w}({\mathbb R}^n)$ which could be viewed as extension of both $M^{p,\varphi}(\Rn)$ and $L^{p,\kappa}(w)$. In \cite{GulEMJ2012}, the boundedness of the classical operators and their commutators in spaces $M^{p,\varphi}_{w}$ was also studied,
see also \cite{GulKarMustSer, GulAliz, GulOm, GulHamz, KarGulSer}.

The spaces $M^{p,\varphi}_{w}({\mathbb R}^n)$ defined by the norm
\begin{equation*}
\|f\|_{M^{p,\varphi}_{w}} \equiv
\sup\limits_{x\in\Rn, r>0} \varphi(x,r)^{-1} \,
w(B(x,r))^{-1/p} \, \|f\|_{L^{p}_{w}(B(x,r))},
\end{equation*}
where the function $\varphi$ is a positive measurable function on
${\mathbb R}^n \times (0,\infty)$
and
$w$ is a non-negative measurable function on ${\mathbb R}^n$.
Here and everywhere in the sequel $B(x,r)$ is the ball in $\Rn$ of radius $r$ centered
at $x$ and  $|B(x,r)|=v_n r^n$ is its Lebesgue measure, where $v_n$ is the volume of the unit ball in $\Rn$.

The Orlicz spaces $L^{\Phi}$ were first introduced by Orlicz in \cite{456s, 456sb} as generalizations
of Lebesgue spaces $L^p$. Since then, the theory of Orlicz spaces themselves has been well developed and the spaces have been widely used in probability, statistics, potential theory, partial differential equations, as well as harmonic analysis and some other fields of analysis.

In \cite{DerGulSam}, the generalized Orlicz-Morrey space $M^{\Phi,\varphi}(\Rn)$ was introduced to unify Orlicz and generalized Morrey spaces. Other definitions of generalized Orlicz-Morrey spaces can be found in \cite{Nakai0} and \cite{SawSugTan}. In words of \cite{GulHasSawNak}, our generalized Orlicz-Morrey space is the third kind and the ones in \cite{Nakai0} and \cite{SawSugTan} are the first kind and the second kind, respectively. According to the examples in \cite{GalaSawTan}, one can say that the generalized Orlicz-Morrey space of the first kind and the second kind are different. Notice that the definition of the space of the third kind relies only on the fact that $L^{\Phi}$ is a normed linear space, which is independent of the condition that it is generated by modulars. On the other hand, the spaces of the first and the second kind are defined via the family of modulars.

As based on the results of \cite{AkbGulMust}, the following conditions were introduced in \cite{DerGulSam}
(see, also \cite{GulDerHasJIA}) for the boundedness of the maximal operators and the singular integral operators on $M^{\Phi,\varphi}(\Rn)$, respectively,
\begin{equation}\label{eq3.6.VZMaxGO}
\sup_{r<t<\infty} \Phi^{-1}\big(t^{-n}\big) \es_{t<s<\infty}\frac{\varphi_1(x,s)}{\Phi^{-1}\big(s^{-n}\big)} \le C \, \varphi_2(x,r),
\end{equation}
\begin{equation}\label{eq3.6.VZint}
\int_{r}^{\infty} \Big(\es_{t<s<\infty}\frac{\varphi_1(x,s)}{\Phi^{-1}\big(s^{-n}\big)}\Big) \, \Phi^{-1}\big(t^{-n}\big)\frac{dt}{t}  \le C \, \varphi_2(x,r),
\end{equation}
where $C$ does not depend on $x$ and $r$. It was also shown in \cite{DerGulSam}, the condition \eqref{eq3.6.VZMaxGO} is weaker than \eqref{eq3.6.VZint}.

Various versions of generalized weighted Orlicz-Morrey spaces were introduced in \cite{LiNaYaZh}, \cite{KwokTokMJ}, \cite{PoelTor} and \cite{GulOmSaw}. The spaces in \cite{LiNaYaZh} and \cite{KwokTokMJ} can be seen as the weighted version of generalized Orlicz-Morrey spaces of the first kind and the spaces in \cite{PoelTor} can be seen as the weighted version of generalized Orlicz-Morrey spaces of the second kind.
We used the definition of \cite{GulOmSaw} which can be seen as the weighted version of generalized Orlicz-Morrey spaces of the third kind.

In this paper, we shall investigate the boundedness of the singular integral operators and their commutators on generalized weighted Orlicz-Morrey spaces.

By $A \lesssim B$ we mean that $A \le C B$ with some positive constant $C$
independent of appropriate quantities. If $A \lesssim B$ and $B \lesssim A$, we
write $A\approx B$ and say that $A$ and $B$ are  equivalent.

\

\section{Definitions and Preliminary Results}

Even though the $A_p$ class is well known, for completeness, we offer the definition of $A_p$ weight functions. Let $\mathcal{B}=\{B(x,r):x\in\Rn, ~r>0\}$.
\begin{defn}
For, $1<p<\infty$, a locally integrable function $w:\Rn\to [0,\infty)$ is said to be an $A_p$ weight if
$$
\sup_{B\in \mathcal{B}}\left(\frac{1}{|B|}\int_{B}w(x)dx\right)\left(\frac{1}{|B|}\int_{B}w(x)^{-\frac{p^{\prime}}{p}}dx\right)^{\frac{p}{p^{\prime}}}<\infty.
$$
A locally integrable function $w:\Rn\to [0,\infty)$ is said to be an $A_1$ weight if
$$
\frac{1}{|B|}\int_{B}w(y)dy\le Cw(x),\qquad a.e.~~x\in B
$$
for some constant $C>0$. We define $A_{\infty}=\bigcup_{p\geq 1} A_p$.
\end{defn}

For any $w\in A_{\infty}$ and any Lebesgue measurable set $E$, we write $w(E)=\int_{E}w(x)dx$.

We recall the definition of Young functions.

\begin{defn}\label{def2}
A function $\Phi : [0,\i) \rightarrow [0,\i]$ is called a Young function,
if $\Phi$ is convex, left-continuous,
$\lim\limits_{r\rightarrow 0^+} \Phi(r) = \Phi(0) = 0$
and
$\lim\limits_{r\rightarrow \i} \Phi(r) = \i$.
\end{defn}
The convexity and the condition $\Phi(0) = 0$
force any Young function to be increasing.
In particular,
if there exists $s \in (0,\i)$ such that $\Phi(s) = \i$,
then it follows that $\Phi(r) = \i$ for $r \geq s$.

Let $\mathcal{Y}$ be the set of all Young functions $\Phi$ such that
\begin{equation*}
0<\Phi(r)<\i\qquad \text{for} \qquad 0<r<\i.
\end{equation*}
If $\Phi \in \mathcal{Y}$,
then $\Phi$ is absolutely continuous on every closed interval in $[0,\i)$
and bijective from $[0,\i)$ to itself.

For a Young function $\Phi$ and $0 \leq s \leq \i$, let
$$
\Phi^{-1}(s)
\equiv
\inf\{r\geq 0: \Phi(r)>s\}\qquad (\inf\emptyset=\i).$$

A Young function $\Phi$ is said to satisfy the $\Delta_2$-condition, denoted by $\Phi \in \Delta_2$, if
$$
\Phi(2r)\le k\Phi(r), \qquad r>0
$$
for some $k>1$.
If $\Phi \in \Delta_2$, then $\Phi \in \mathcal{Y}$.
A Young function $\Phi$ is said to satisfy the $\nabla_2$-condition,
denoted also by $\Phi \in \nabla_2$, if
$$\Phi(r)\leq \frac{1}{2k}\Phi(kr),\qquad r\geq 0$$
for some $k>1$.
The function $\Phi(r) = r$ satisfies the $\Delta_2$-condition
and it fails the $\nabla_2$-condition.
If $1 < p < \i$,
then $\Phi(r) = r^p$ satisfies both the conditions.
The function $\Phi(r) = e^r - r - 1$ satisfies the
$\nabla_2$-condition but it fails the $\Delta_2$-condition.

For a Young function $\Phi$,
the complementary function $\widetilde{\Phi}(r)$ is defined by
\begin{equation*}
\widetilde{\Phi}(r)
\equiv
\left\{
\begin{array}{ccc}
\sup\{rs-\Phi(s): s\in [0,\i)\}
& \mbox{ if } & r\in [0,\i), \\
\i&\mbox{ if }& r=\i.
\end{array}
\right.
\end{equation*}
The complementary function $\widetilde{\Phi}$
is also a Young function and
it satisfies $\widetilde{\widetilde{\Phi}}=\Phi$.
Note that $\Phi \in \nabla_2$ if and only if $\widetilde{\Phi} \in \Delta_2$.

It is also known that
\begin{equation}\label{2.3}
r\leq \Phi^{-1}(r)\widetilde{\Phi}^{-1}(r)\leq 2r, \qquad r\geq 0.
\end{equation}

We recall an important pair of indices used for Young functions. For any Young function $\Phi$, write
$$
h_{\Phi}(t)=\sup_{s>0}\frac{\Phi(st)}{\Phi(s)}, \quad t>0.
$$
The lower and upper dilation indices of $\Phi$ are defined by
$$
i_{\Phi}=\lim_{t\to 0^+}\frac{\log h_{\Phi}(t)}{\log t} \text{ and } I_{\Phi}=\lim_{t\to \infty}\frac{\log h_{\Phi}(t)}{\log t},
$$
respectively.

A Young function $\Phi$ is said to be of upper type $p$ (resp. lower type $p$) for some $p\in[0,\i)$, if there exists a positive constant $C$ such that, for all $t\in[1,\i)$ (resp. $t\in[0,1]$) and $s\in[0,\i)$,
\begin{equation}\label{uplowtyp}
\Phi(st)\le Ct^p\Phi(s).
\end{equation}

\begin{rem}\label{remlowup}
It is well known that if $\Phi$ is of lower type $p_0$ and upper type $p_1$ with $1<p_0\le p_1<\i$,
then $\widetilde{\Phi}$ is of lower type $p^{\prime}_1$ and upper type $p^{\prime}_0$
and $\Phi$ is lower type $p_0$ and upper type $p_1$ with $1<p_0\le p_1<\i$ if and only if $\Phi\in \Delta_2\cap\nabla_2$.
\end{rem}

It is easy to see that $\Phi$ is of lower type $i_{\Phi}-\varepsilon$, and of upper type $I_{\Phi}+\varepsilon$ for every $\varepsilon > 0$, where the constant appearing in \eqref{uplowtyp} may depend on $\varepsilon$. We also mention that $i_{\Phi}$ and $I_{\Phi}$ may be viewed as the supremum of the lower types of $\Phi$ and the infimum of upper types, respectively.


\begin{defn}\label{ttss}
For a Young function $\Phi$
and $w\in A_{\infty}$,
the set
$$L^{\Phi}_{w}(\Rn)
\equiv
\left\{f-\text{measurable} :
\int_{\Rn}\Phi(k|f(x)|) w(x)dx<\i \text{ for some $k>0$ }\right\}
$$
is called the weighted Orlicz space.
The local weighted Orlicz space $L^{\Phi,\rm loc}_{w}(\Rn)$
is defined as the set of all functions $f$
such that $f\chi_{_B}\in L^{\Phi}_{w}(\Rn)$ for all balls $B \subset \Rn$.
\end{defn}
Note that $L^{\Phi}_{w}(\Rn)$ is a Banach space
with respect to the norm
$$
\|f\|_{L^{\Phi}_{w}(\Rn)}\equiv\|f\|_{L^{\Phi}_{w}}=
\inf\left\{\lambda>0:
\int_{\Rn}\Phi\Big(\frac{|f(x)|}{\lambda}\Big)w(x)dx\leq 1\right\}
$$
and
\begin{equation*}
  \int_{\Rn}\Phi\Big(\frac{|f(x)|}{\|f\|_{L^{\Phi}_{w}}}\Big)w(x)dx\leq 1.
\end{equation*}

The following analogue of the H\"older inequality is known.
\begin{equation}\label{Hldw}
\left|\int_{\Rn}f(x)g(x)w(x)dx\right|\leq 2 \|f\|_{L^{\Phi}_{w}} \|g\|_{L^{\widetilde{\Phi}}_{w}}.
\end{equation}
For the proof of \eqref{2.3} and \eqref{Hldw}, see, for example \cite{RaoRen}.


For a weight $w$, a measurable function $f$ and $t>0$, let
$$
m(w,\ f,\ t)=w(\{x\in\Rn:|f(x)|>t\}).
$$

\begin{defn}\label{wwos} The weak weighted Orlicz space
$$
WL^{\Phi}_w(\mathbb{R}^{n})=\{f-\text{measurable}:\Vert f\Vert_{WL^{\Phi}_w}<\infty\}
$$
is defined by the norm
$$
\|f\|_{WL^{\Phi}_{w}(\Rn)}\equiv\|f\|_{WL^{\Phi}_{w}}=\inf\Big\{\lambda>0\ :\ \sup_{t>0}\Phi(t)m\Big(w,\,\frac{f}{\lambda},\ t\Big)\ \leq 1\Big\}.
$$
\end{defn}

We can prove the following by a direct calculation:
\begin{equation}\label{charorlw}
\|\chi_{_B}\|_{L^{\Phi}_{w}}=\|\chi_{_B}\|_{WL^{\Phi}_{w}}
= \frac{1}{\Phi^{-1}\left(w(B)^{-1}\right)},\quad B\in\mathcal{B},
\end{equation}
where $\chi_{_B}$ denotes the characteristic function of the $B$.

The Hardy-Littlewood maximal operator $M$ is defined by
$$
Mf(x)=\sup_{r>0}\frac{1}{|B(x,r)|}\int_{B(x,r)}|f(y)|dy,\qquad x\in\Rn
$$
for a locally integrable function $f$ on $\Rn$.

The Calder\'{o}n-Zygmund (singular integral) operator $T$ is a bounded
linear operator on $L^2(\Rn)$ for which there exists a function $K$ on $\Rn\times\Rn$ that satisfies
the following conditions:
\begin{itemize}
\item[(i)] There exists $C>0$ such that $|K(x,y)|\leq\frac{C}{|x-y|^{n}}$ for $x\neq y.$
\item[(ii)] There exists $\varepsilon>0$ and $C>0$ such that
\begin{align*}
|K(x,y)-K(z,y)|+|K(y,x)-K(y,z)|\leq C\frac{|x-z|^{\varepsilon}}{|x-y|^{n+\varepsilon}}
\end{align*}
whenever $|x-y|\geq2|x-z|$ with $x\neq y.$
\item[(iii)] If $f\in L^{\infty}_{\rm comp}(\Rn)$, then
\begin{equation} \label{SIO1}
Tf(x) =  \int_{\Rn} K(x,y) f(y) \,dy
\end{equation}
for all $x\in\Rn\setminus \text{supp}(f).$
\end{itemize}

\begin{rem} \label{GURB01}
One can prove that $T$ is of weak type $(1, 1)$ and type $(p, p)$, $1 < p < \i$, for $f\in L^{\infty}_{\rm comp}(\Rn)$, and then $T$ is uniquely extended to an $L^p$-bounded operator by the density of $L^{\infty}_{\rm comp}(\Rn)$ in $L^p(\Rn)$; for example, see \cite{Stein93}. On the other hand, $L^{\infty}_{\rm comp}(\Rn)$ is not dense in Morrey spaces in general. Therefore, we need to give a precise definition of $Tf$ for the function $f$ in Morrey spaces, for example,
\begin{equation*}
T f(x)=  T \big( f \chi_{_{2B}}\big) + \int_{\Rn \setminus (2B)} K(x,y) f(y)  dy,
\end{equation*}
for some ball $B$ which contains $x$, with proving the absolute convergence of the integral in the second term and the independence of the choice of the
ball $B$ (cf. \cite{KwokJapSoc,Nakai2}).
\end{rem}


\

\section{Generalized weighted Orlicz-Morrey spaces}

In this section, we give the definition of the generalized weighted Orlicz-Morrey spaces $M^{\Phi,\varphi}_{w}({\mathbb R}^n)$ and investigate the fundamental structure of $M^{\Phi,\varphi}_{w}({\mathbb R}^n)$. In the sequel we use the notation $\|f\|_{L^{\Phi}_{w}(B)}\equiv\|f\chi_{_B}\|_{L^{\Phi}_{w}}$, $\varphi(B)\equiv\varphi(x,r)$ and $cB\equiv B(x,cr)$ for $B=B(x,r)\in \mathcal{B}$ and $c>0$.
\begin{defn}\label{weigenOrlMor}
Let $\varphi$ be a positive measurable function
on $\Rn\times (0,\i)$, let $w$ be a non-negative measurable function on $\Rn$
and $\Phi$ any Young function.
Denote by $M^{\Phi,\varphi}_{w}({\mathbb R}^n)$
the generalized weighted Orlicz-Morrey space,
the space of all functions $f\in L^{\Phi,\rm loc}_{w}({\mathbb R}^n)$
such that
\begin{align*}
\|f\|_{M^{\Phi,\varphi}_{w}(\Rn)}\equiv\|f\|_{M^{\Phi,\varphi}_{w}} &=
\sup\limits_{x\in\Rn, r>0} \varphi(x,r)^{-1} \,
\Phi^{-1}\big(w(B(x,r))^{-1}\big) \, \|f\|_{L^{\Phi}_{w}(B(x,r))}\\
&\equiv \sup\limits_{B\in\mathcal{B}} \varphi(B)^{-1} \,
\Phi^{-1}\big(w(B)^{-1}\big) \, \|f\|_{L^{\Phi}_{w}(B)}<\i.
\end{align*}
\end{defn}
\begin{ex}
Let $1\le p<\infty$ and $0<\kappa<1$.
\begin{itemize}
\item
If $\Phi(r)=r^p$ and $\varphi(x,r)=w(B(x,r))^{-1/p}$, then $M^{\Phi,\varphi}_{w}({\mathbb R}^n)=L^{p}_{w}({\mathbb R}^n)$.
\item
If $\Phi(r)=r^p$ and $\varphi(x,r)=w(B(x,r))^{\frac{\kappa-1}{p}}$, then $M^{\Phi,\varphi}_{w}({\mathbb R}^n)=L^{p,\kappa}(w)$.
\item
If $\Phi(r)=r^p$, then $M^{\Phi,\varphi}_{w}({\mathbb R}^n)=M^{p,\varphi}_{w}({\mathbb R}^n)$.
\item
If $\varphi(x,r)=\Phi^{-1}\big(w(B(x,r))^{-1}\big)$, then $M^{\Phi,\varphi}_{w}({\mathbb R}^n)=L^{\Phi}_{w}({\mathbb R}^n)$.
\end{itemize}
\end{ex}
For a Young function $\Phi$ and a non-negative measurable function $w$, we denote by ${\mathcal{G}}_{\Phi}^{w}$ the set of all functions $\varphi:\Rn\times(0,\infty) \to (0,\infty)$
such that
$$
\inf\limits_{B\in\mathcal{B}; \, r_{B} \leq r_{B_0}}\varphi(B) \gtrsim \varphi(B_0) \quad \text{for all}~ B_0\in\mathcal{B}
$$
and
$$
\inf\limits_{B\in\mathcal{B}; \, r_{B}\geq r_{B_0}}\frac{\varphi(B)}{\Phi^{-1}\big(w(B)^{-1}\big)} \gtrsim \frac{\varphi(B_0)}{\Phi^{-1}\big(w(B_0)^{-1}\big)}  \quad \text{for all}~ B_0\in\mathcal{B},
$$
where $r_{B}$ and $r_{B_0}$ denote the radius of the balls $B$ and $B_0$, respectively.

\begin{lem}\label{charwOrlMor}
Let $B_0:=B(x_0,r_0)$. If $\varphi\in{\mathcal{G}}_{\Phi}^{w}$, then there exists $C>0$ such that
$$
\frac{1}{\varphi(B_0)}\leq \|\chi_{B_0}\|_{M^{\Phi,\varphi}_{w}}\leq \frac{C}{\varphi(B_0)}.
$$
\end{lem}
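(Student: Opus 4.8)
The plan is to read off $\|\chi_{B_0}\|_{M^{\Phi,\varphi}_{w}}$ directly from the definition as a supremum over balls $B\in\mathcal{B}$, evaluating the local norm through the identity $\|\chi_{B_0}\|_{L^{\Phi}_{w}(B)}=\|\chi_{B_0\cap B}\|_{L^{\Phi}_{w}}$ and the characteristic-function formula \eqref{charorlw}, together with the monotonicity of the Luxemburg norm. Note first that $\chi_{B_0}\in L^{\Phi,\rm loc}_{w}(\Rn)$, since $\chi_{B_0}\chi_{B}=\chi_{B_0\cap B}$ and $w(B_0\cap B)<\i$ for every $B$ (as $w\in A_{\infty}$ is locally integrable), so the norm makes sense.

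For the lower bound I would simply restrict the supremum to the single ball $B=B_0$. By \eqref{charorlw}, $\|\chi_{B_0}\|_{L^{\Phi}_{w}(B_0)}=\|\chi_{B_0}\|_{L^{\Phi}_{w}}=1/\Phi^{-1}\big(w(B_0)^{-1}\big)$, so the corresponding term in the supremum equals exactly $\varphi(B_0)^{-1}\,\Phi^{-1}\big(w(B_0)^{-1}\big)\cdot\Phi^{-1}\big(w(B_0)^{-1}\big)^{-1}=\varphi(B_0)^{-1}$. Hence $\|\chi_{B_0}\|_{M^{\Phi,\varphi}_{w}}\ge 1/\varphi(B_0)$; no hypothesis on $\varphi$ is needed here.

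For the upper bound I would fix an arbitrary $B=B(x,r)\in\mathcal{B}$ and estimate $\varphi(B)^{-1}\,\Phi^{-1}\big(w(B)^{-1}\big)\,\|\chi_{B_0}\|_{L^{\Phi}_{w}(B)}$ uniformly in $B$. If $B\cap B_0=\emptyset$ this quantity is $0$. Otherwise, since $\chi_{B_0}\chi_B=\chi_{B_0\cap B}$ and $w(B_0\cap B)\le\min\{w(B),w(B_0)\}$, monotonicity of $\Phi^{-1}$ and \eqref{charorlw} give simultaneously
$$
\|\chi_{B_0}\|_{L^{\Phi}_{w}(B)}\le\frac{1}{\Phi^{-1}\big(w(B)^{-1}\big)}
\qquad\text{and}\qquad
\|\chi_{B_0}\|_{L^{\Phi}_{w}(B)}\le\frac{1}{\Phi^{-1}\big(w(B_0)^{-1}\big)}.
$$
I then split into two cases. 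If $r\le r_0$, the first bound reduces the estimate to $\varphi(B)^{-1}\lesssim\varphi(B_0)^{-1}$, which is exactly the first defining inequality of ${\mathcal{G}}_{\Phi}^{w}$. If $r\ge r_0$, the second bound reduces the estimate to $\Phi^{-1}\big(w(B)^{-1}\big)/\varphi(B)\lesssim\Phi^{-1}\big(w(B_0)^{-1}\big)/\varphi(B_0)$, which is exactly the second defining inequality of ${\mathcal{G}}_{\Phi}^{w}$. In both cases the bound $C/\varphi(B_0)$ is obtained with $C$ independent of $B$, and taking the supremum over $B\in\mathcal{B}$ yields $\|\chi_{B_0}\|_{M^{\Phi,\varphi}_{w}}\le C/\varphi(B_0)$.

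The argument is essentially routine; the only point requiring care is the bookkeeping — pairing each of the two monotonicity bounds on $\|\chi_{B_0}\|_{L^{\Phi}_{w}(B)}$ with the matching one of the two hypotheses on $\varphi$, and handling the disjointness case $B\cap B_0=\emptyset$ separately, so that the final estimate is genuinely uniform in $B$.
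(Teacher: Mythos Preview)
Your proof is correct and follows essentially the same route as the paper: the lower bound comes from evaluating the supremum at $B=B_0$, and the upper bound is handled by splitting into the cases $r\le r_0$ and $r\ge r_0$, in each case pairing the appropriate monotonicity estimate on $\|\chi_{B_0}\|_{L^{\Phi}_{w}(B)}$ with the corresponding defining inequality of ${\mathcal{G}}_{\Phi}^{w}$. Your treatment is in fact slightly more careful than the paper's, since you explicitly dispose of the disjoint case $B\cap B_0=\emptyset$ and verify $\chi_{B_0}\in L^{\Phi,\rm loc}_{w}(\Rn)$.
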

\begin{proof}
Let $B=B(x,r)$ denote an arbitrary ball in $\Rn$. By the definition and \eqref{charorlw}, it is easy to see that
\begin{align*}
\|\chi_{B_0}\|_{M^{\Phi,\varphi}_{w}}&=\sup\limits_{B\in\mathcal{B}}\varphi(B)^{-1}\Phi^{-1}(w(B)^{-1})\frac{1}{\Phi^{-1}(w(B\cap B_0)^{-1})}\\
&\geq \varphi(B_0)^{-1}\Phi^{-1}(w(B_0)^{-1})\frac{1}{\Phi^{-1}(w(B_0\cap B_0)^{-1})}=\frac{1}{\varphi(B_0)}.
\end{align*}

Now if $r\leq r_0$, then $\varphi(B_0)\leq C\varphi(B)$ and
\begin{align*}
\varphi(B)^{-1}\Phi^{-1}(w(B)^{-1})\|\chi_{B_0}\|_{L^{\Phi}_{w}(B)}\leq\frac{1}{\varphi(B)}\leq \frac{C}{\varphi(B_0)}.
\end{align*}

On the other hand if $r\geq r_0$, then $\frac{\varphi(B_0)}{\Phi^{-1}(w(B_0)^{-1})}\leq C\frac{\varphi(B)}{\Phi^{-1}(w(B)^{-1})}$ and
\begin{align*}
\varphi(B)^{-1}\Phi^{-1}(w(B)^{-1})\|\chi_{B_0}\|_{L^{\Phi}_{w}(B)}\leq \frac{C}{\varphi(B_0)}.
\end{align*}
This completes the proof.
\end{proof}

We denote by $WM^{\Phi,\varphi}_{w}({\mathbb R}^n)$
the weak generalized weighted Orlicz-Morrey space,
the space of all functions $f\in WL^{\Phi,\rm loc}_{w}({\mathbb R}^n)$
such that
$$
\|f\|_{WM^{\Phi,\varphi}_{w}}=
\sup\limits_{x\in\Rn, r>0} \varphi(x,r)^{-1} \,
\Phi^{-1}\big(w(B(x,r))^{-1}\big) \, \|f\|_{WL^{\Phi}_{w}(B(x,r))}<\i.
$$

\begin{lem}\label{charwOrlMorW}
Let $B_0:=B(x_0,r_0)$. If $\varphi\in{\mathcal{G}}_{\Phi}^{w}$, then there exists $C>0$ such that
$$
\frac{1}{\varphi(B_0)}\leq \|\chi_{B_0}\|_{WM^{\Phi,\varphi}_{w}}\leq \frac{C}{\varphi(B_0)}.
$$
\end{lem}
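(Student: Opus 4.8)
The plan is to follow the proof of Lemma~\ref{charwOrlMor} essentially verbatim, since the only structural input used there is the identity \eqref{charorlw}, and that identity already records the coincidence $\|\chi_{_B}\|_{L^{\Phi}_{w}}=\|\chi_{_B}\|_{WL^{\Phi}_{w}}=1/\Phi^{-1}\big(w(B)^{-1}\big)$ for every ball $B\in\mathcal{B}$. Consequently, for an arbitrary ball $B=B(x,r)$ one has $\|\chi_{B_0}\|_{WL^{\Phi}_{w}(B)}=\|\chi_{B\cap B_0}\|_{WL^{\Phi}_{w}}=1/\Phi^{-1}\big(w(B\cap B_0)^{-1}\big)$, which is exactly the quantity that appeared in the strong case, so the two estimates below go through with $L^{\Phi}_{w}$ replaced by $WL^{\Phi}_{w}$ everywhere.

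For the lower bound I would simply take $B=B_0$ in the supremum defining $\|\chi_{B_0}\|_{WM^{\Phi,\varphi}_{w}}$; since $B_0\cap B_0=B_0$, this particular term equals $\varphi(B_0)^{-1}\Phi^{-1}\big(w(B_0)^{-1}\big)\big/\Phi^{-1}\big(w(B_0)^{-1}\big)=1/\varphi(B_0)$, whence $\|\chi_{B_0}\|_{WM^{\Phi,\varphi}_{w}}\ge 1/\varphi(B_0)$.

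For the upper bound I would split the supremum over $B=B(x,r)$ into the cases $r\le r_0$ and $r\ge r_0$. When $r\le r_0$, the first defining property of ${\mathcal{G}}_{\Phi}^{w}$ gives $\varphi(B_0)\lesssim\varphi(B)$; combining this with $w(B\cap B_0)\le w(B)$ (so that $\Phi^{-1}\big(w(B)^{-1}\big)\le\Phi^{-1}\big(w(B\cap B_0)^{-1}\big)$, as $\Phi^{-1}$ is increasing) yields
$$
\varphi(B)^{-1}\Phi^{-1}\big(w(B)^{-1}\big)\,\|\chi_{B_0}\|_{WL^{\Phi}_{w}(B)}\le\varphi(B)^{-1}\lesssim\varphi(B_0)^{-1}.
$$
When $r\ge r_0$, the second defining property of ${\mathcal{G}}_{\Phi}^{w}$ gives $\varphi(B_0)\big/\Phi^{-1}\big(w(B_0)^{-1}\big)\lesssim\varphi(B)\big/\Phi^{-1}\big(w(B)^{-1}\big)$; combining this with $w(B\cap B_0)\le w(B_0)$ (so that $1/\Phi^{-1}\big(w(B\cap B_0)^{-1}\big)\le 1/\Phi^{-1}\big(w(B_0)^{-1}\big)$) yields
$$
\varphi(B)^{-1}\Phi^{-1}\big(w(B)^{-1}\big)\,\|\chi_{B_0}\|_{WL^{\Phi}_{w}(B)}\le\frac{\Phi^{-1}\big(w(B)^{-1}\big)}{\varphi(B)}\cdot\frac{1}{\Phi^{-1}\big(w(B_0)^{-1}\big)}\lesssim\frac{1}{\varphi(B_0)}.
$$
Taking the supremum over all $B\in\mathcal{B}$ then gives $\|\chi_{B_0}\|_{WM^{\Phi,\varphi}_{w}}\le C/\varphi(B_0)$.

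I do not expect a genuine obstacle here: the argument is a transcription of the preceding lemma, the one substantive point being that the weak and strong weighted Orlicz norms of a characteristic function agree, which is precisely \eqref{charorlw}. The only place requiring a moment's care is the direction of monotonicity of $\Phi^{-1}$ when passing from $w(B\cap B_0)$ to $w(B)$ or to $w(B_0)$, but since $B\cap B_0$ is contained in both $B$ and $B_0$ the needed inequalities $w(B\cap B_0)\le w(B)$ and $w(B\cap B_0)\le w(B_0)$ are immediate.
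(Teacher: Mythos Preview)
Your proposal is correct and is exactly the argument the paper has in mind: its own proof consists of a single sentence pointing back to Lemma~\ref{charwOrlMor} and invoking \eqref{charorlw}, and you have simply written out that transcription in full, with the same case split $r\le r_0$ versus $r\ge r_0$ and the same use of the two defining inequalities of ${\mathcal{G}}_{\Phi}^{w}$.
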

\begin{proof}
The proof could be made similarly to the proof of Lemma \ref{charwOrlMor} thanks to \eqref{charorlw}.
\end{proof}

\

\section{Calder\'{o}n-Zygmund operators}

$~~~$ In this section necessary and sufficient conditions for the weak/strong boundedness of the Calder\'{o}n-Zygmund operator $T$ in generalized weighted Orlicz-Morrey spaces will be obtained.

Before the presentation of the main results, we recall some crucial inequalities to establish the boundedness of
singular integral operator in generalized weighted Orlicz-Morrey spaces.

\begin{thm}\label{MaxweigOrlcWMAx}\cite[Proposition 2.4]{GogKrb1994}
Let $\Phi$ be a Young function.
Assume in addition $w \in A_{i_{\Phi}}$. Then, there is a constant $C>1$ such that
\begin{equation*}
\Phi(t)m\Big(w,\,Mf,\ t\Big)\leq C \int_{\Rn}\Phi\left(C|f(x)|\right)w(x)dx
\end{equation*}
for every locally integrable $f$ and every $t>0$.
\end{thm}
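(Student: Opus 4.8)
The plan is to deduce this modular weak-type estimate from the classical weighted weak-$L^p$ bound for the maximal operator, combined with a truncation of $f$ at height $t$ and the lower-type property of $\Phi$.

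First I would fix the exponent. Since $w\in A_{i_{\Phi}}$ and the Muckenhoupt classes are open (by the reverse Hölder inequality), there is $p$ with $1\le p<i_{\Phi}$ and $w\in A_p$; in the borderline case $i_{\Phi}=1$ one simply takes $p=1$. Recalling that $\Phi$ is of lower type $i_{\Phi}-\varepsilon$ for every $\varepsilon>0$ (and, in the borderline case $p=1=i_{\Phi}$, using merely the convexity of $\Phi$ and $\Phi(0)=0$), we may assume $\Phi$ is of lower type $p$, so by \eqref{uplowtyp} there is $C_0>0$ with
\[
\Phi(\sigma s)\le C_0\,\sigma^{p}\,\Phi(s)\qquad(s\ge 0,\ \sigma\in[0,1]).
\]
Moreover, by Muckenhoupt's theorem $w\in A_p$ implies that $M$ maps $L^{p}_{w}(\Rn)$ into weak-$L^{p}_{w}(\Rn)$; that is, there is $C_1>0$ with
\[
w\big(\{x\in\Rn:\,Mg(x)>\mu\}\big)\le\frac{C_1}{\mu^{p}}\int_{\Rn}|g(x)|^{p}w(x)\,dx
\]
for all $\mu>0$ and all measurable $g$.

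Next, for fixed $t>0$ I would split $f=f_1+f_2$ with $f_1=f\,\chi_{\{|f|>t/2\}}$ and $f_2=f\,\chi_{\{|f|\le t/2\}}$. Since $\|f_2\|_{L^{\infty}}\le t/2$ we have $Mf_2\le t/2$ pointwise, hence $Mf\le Mf_1+t/2$, and therefore $\{Mf>t\}\subset\{Mf_1>t/2\}$. Applying the weak-$L^p_w$ bound with $g=f_1$ and $\mu=t/2$ gives
\[
m(w,Mf,t)\le w\big(\{Mf_1>t/2\}\big)\le C_1\Big(\frac{2}{t}\Big)^{p}\int_{\{|f|>t/2\}}|f(x)|^{p}w(x)\,dx.
\]
On $\{|f|>t/2\}$ we have $\sigma:=t/(2|f(x)|)\in(0,1)$, so $\Phi(t)=\Phi(\sigma\cdot 2|f(x)|)\le C_0\,\sigma^{p}\,\Phi(2|f(x)|)$, i.e. $\Phi(t)\,t^{-p}|f(x)|^{p}\le C_0 2^{-p}\Phi(2|f(x)|)$. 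Multiplying the previous display by $\Phi(t)$ and inserting this pointwise inequality yields
\[
\Phi(t)\,m(w,Mf,t)\le C_0C_1\int_{\{|f|>t/2\}}\Phi\big(2|f(x)|\big)w(x)\,dx\le C_0C_1\int_{\Rn}\Phi\big(C|f(x)|\big)w(x)\,dx
\]
for any $C\ge 2$, since $\Phi$ is increasing; taking $C=\max\{2,C_0C_1\}$ completes the argument.

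The only genuinely delicate point is the choice of $p$ in the first step: one must know that $w\in A_{i_{\Phi}}$ forces $w\in A_p$ for some $p$ strictly below $i_{\Phi}$, so that $\Phi$ is of lower type $p$ and $M$ simultaneously obeys the weighted weak-$L^p$ inequality with the \emph{same} $p$. This is precisely where the self-improvement (openness) of the Muckenhoupt $A_p$ condition enters. The remainder — the truncation and the convexity/lower-type bookkeeping — is routine.
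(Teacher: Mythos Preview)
The paper does not supply its own proof of this theorem; it is quoted verbatim from \cite[Proposition~2.4]{GogKrb1994} and used as a black box. So there is nothing in the present paper to compare your argument against.

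That said, your argument is correct and is essentially the standard route to such modular weak-type inequalities. The two ingredients --- the openness of the $A_p$ condition to secure some $p<i_\Phi$ with $w\in A_p$, and the lower-type-$p$ inequality $\Phi(\sigma s)\le C_0\sigma^{p}\Phi(s)$ for $\sigma\in[0,1]$ --- combine with the Calder\'on--Zygmund truncation $f=f\chi_{\{|f|>t/2\}}+f\chi_{\{|f|\le t/2\}}$ exactly as you describe. Your handling of the borderline case $i_\Phi=1$ via convexity (which gives lower type $1$ with $C_0=1$) is the right patch. The final bookkeeping with $C=\max\{2,C_0C_1\}$ is fine: $C\ge2$ lets you replace $\Phi(2|f|)$ by $\Phi(C|f|)$ via monotonicity, and $C\ge C_0C_1$ absorbs the outer constant.
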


\begin{rem}\label{remwmi}
For a sublinear operator $S$, weak modular inequality
\begin{equation}\label{weakmodinqsub}
\Phi(t)m\Big(w,\,Sf,\ t\Big)\leq C \int_{\Rn}\Phi\left(C|f(x)|\right)w(x)dx
\end{equation}
implies the corresponding norm inequality.
Indeed, let \eqref{weakmodinqsub} holds. Then, we have
\begin{align*}
\Phi(t)w\left(\{x\in\Rn:\frac{|Sf(x)|}{C^2\Vert f\Vert_{L^{\Phi}_w}}>t\}\right)&=\Phi(t)w\left(\{x\in\Rn:\left|S\Big(\frac{f}{C^2\Vert f\Vert_{L^{\Phi}_w}}\Big)(x)\right|>t\}\right)
\\
&\leq C \int_{\Rn}\Phi\left(\frac{|f(x)|}{C\Vert f\Vert_{L^{\Phi}_w}}\right)w(x)dx\leq 1,
\end{align*}
which implies $\|Sf\|_{WL^{\Phi}_w}\lesssim \|f\|_{L^{\Phi}_w}$.
\end{rem}

\begin{thm}\cite[Theorem 2.2]{GogKrb1995}\label{MaxweigOrlcW}
Let $\Phi$ be a Young function with $\Phi\in\Delta_2$.
Assume in addition $w \in A_{i_{\Phi}}$. Then, there is a constant $C>1$ such that
\begin{equation*}
\Phi(t)m\Big(w,\,Tf,\ t\Big)\leq C \int_{\Rn}\Phi\left(C|f(x)|\right)w(x)dx
\end{equation*}
for every locally integrable $f$ and every $t>0$.
\end{thm}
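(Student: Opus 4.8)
The plan is to derive this weak modular inequality for $T$ from the one for $M$ already recorded in Theorem~\ref{MaxweigOrlcWMAx} (applicable because $w\in A_{i_\Phi}$), via a Coifman--Fefferman good-$\lambda$ comparison of $Tf$ with $Mf$, the hypothesis $\Phi\in\Delta_2$ entering precisely to compare $\Phi$ at comparable heights.

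First I would reduce to $f\in L^{\infty}_{\rm comp}(\Rn)$. We may assume $\int_{\Rn}\Phi(C|f(x)|)w(x)\,dx<\infty$ (otherwise there is nothing to prove), so in particular $f\in L^{\Phi}_{w}(\Rn)$; since $\Phi\in\Delta_2$, the bounded compactly supported functions are dense there, and the general case follows by approximating $f$ by $f_N=f\chi_{\{|f|\le N\}}\chi_{B(0,N)}$, using that $Tf_N\to Tf$ in measure on bounded sets together with Fatou's lemma on the left and monotone convergence on the right (here one uses the continuity of $\Phi$). For $f\in L^{\infty}_{\rm comp}(\Rn)$ one moreover has, a priori,
\begin{equation*}
A:=\sup_{t>0}\Phi(t)\,m(w,Tf,t)<\infty,
\end{equation*}
which is essential for the absorption step below: for large $t$ this follows from $T\colon L^{q}_{w}(\Rn)\to L^{q}_{w}(\Rn)$ with $q>I_\Phi$ (choose $q$ with $w\in A_q$, possible since $A_{i_\Phi}\subset A_q$) together with the finite upper type of $\Phi$, while for small $t$ one uses that $\{x:|Tf(x)|>t\}$ lies in a ball of radius $\approx t^{-1/n}$ (because $|Tf(x)|\lesssim|x|^{-n}$ off $\operatorname{supp}f$) and that $\Phi(t)\,w(B(0,t^{-1/n}))\lesssim 1$, the latter a consequence of Theorem~\ref{MaxweigOrlcWMAx} applied to a fixed characteristic function.

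The core is the good-$\lambda$ inequality: since $w\in A_{i_\Phi}\subset A_\infty$, there are $C>0$ and $\eta>0$ such that, for all $t>0$ and all $\gamma\in(0,1)$,
\begin{equation*}
w\big(\{x:|Tf(x)|>2t,\ Mf(x)\le\gamma t\}\big)\le C\,\gamma^{\eta}\,w\big(\{x:|Tf(x)|>t\}\big),
\end{equation*}
whence $m(w,Tf,2t)\le C\gamma^{\eta}m(w,Tf,t)+m(w,Mf,\gamma t)$. Multiplying by $\Phi(2t)$ and invoking $\Phi\in\Delta_2$ twice --- as $\Phi(2t)\le k\Phi(t)$, and as $\Phi(2t)\le k^{N(\gamma)}\Phi(\gamma t)$ with $N(\gamma)$ chosen so that $2^{N(\gamma)}\ge 2\gamma^{-1}$ --- gives
\begin{equation*}
\Phi(2t)\,m(w,Tf,2t)\le Ck\,\gamma^{\eta}\,\Phi(t)\,m(w,Tf,t)+k^{N(\gamma)}\,\Phi(\gamma t)\,m(w,Mf,\gamma t).
\end{equation*}
Taking the supremum over $t>0$, re-indexing so that the supremum of the left side equals $A$ and the last term is controlled by Theorem~\ref{MaxweigOrlcWMAx}, we obtain
\begin{equation*}
A\le Ck\,\gamma^{\eta}\,A+k^{N(\gamma)}\,C\int_{\Rn}\Phi\big(C|f(x)|\big)w(x)\,dx.
\end{equation*}
Choosing $\gamma\in(0,1)$ with $Ck\gamma^{\eta}\le\tfrac12$ and absorbing (legitimate since $A<\infty$) yields $A\le 2k^{N(\gamma)}C\int_{\Rn}\Phi(C|f|)w$, i.e.\ the asserted inequality with a new constant.

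The main obstacle is exactly the a priori finiteness of $A$: without it the absorption is vacuous, which is why the reduction to $f\in L^{\infty}_{\rm comp}$ and the concluding limiting argument are unavoidable. A secondary technical point is that the good-$\lambda$ inequality in the form above is proved through a Calder\'on--Zygmund (Whitney) decomposition of $\{|Tf|>2t\}$, which needs that set to have finite measure --- again guaranteed for $f\in L^{\infty}_{\rm comp}$. If one wishes to avoid citing the Coifman--Fefferman estimate, one may instead combine the pointwise bound $M^{\#}(Tf)\lesssim Mf$ with the weighted Fefferman--Stein good-$\lambda$ inequality between $Mg$ and $M^{\#}g$ and then use $|Tf|\le M(Tf)$ a.e.; the skeleton (Theorem~\ref{MaxweigOrlcWMAx} $+$ $\Delta_2$ $+$ absorption) is unchanged.
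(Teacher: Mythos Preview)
The paper does not supply a proof of this theorem: it is quoted verbatim from \cite[Theorem~2.2]{GogKrb1995} and used as a black box. There is therefore no ``paper's own proof'' to compare against.

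Your outline is the standard Coifman--Fefferman route and is essentially correct. The reduction to $f\in L^{\infty}_{\rm comp}$, the a~priori finiteness of $A$ via $L^q_w$-boundedness for large $t$ and the kernel decay for small $t$, the $A_\infty$ good-$\lambda$ inequality, and the use of $\Delta_2$ both to pass from $\Phi(2t)$ to $\Phi(t)$ and from $\Phi(2t)$ to $\Phi(\gamma t)$ are all sound, and the absorption step is legitimate once $A<\infty$. Two minor points worth tightening if you write this out in full: (i) in the small-$t$ bound you should make explicit that the radius is $C_f\,t^{-1/n}$ with $C_f$ depending on $\|f\|_{1}$ and $\operatorname{supp}f$, and then use the doubling of $w\in A_\infty$ to compare $w(B(0,C_f t^{-1/n}))$ with the quantity controlled by Theorem~\ref{MaxweigOrlcWMAx}; (ii) the limiting step ``$Tf_N\to Tf$ in measure on bounded sets'' deserves one line of justification, e.g.\ via $L^q_w$-convergence for some $q$ with $w\in A_q$. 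This is, to the best of my knowledge, close in spirit to how \cite{GogKrb1995} proceeds (good-$\lambda$ plus the maximal inequality of \cite{GogKrb1994}), so your argument is not a genuinely different route but rather a reconstruction of the cited proof.
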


\begin{thm}\label{SIOweigOrlcold}\cite[Theorem 3.2]{GogKrb1995}
Let $\Phi$ be a Young function with $\Phi\in\Delta_2\cap\nabla_2$.
Assume in addition $w \in A_{i_{\Phi}}$. Then, there is a constant $C\geq1$ such that
\begin{equation}\label{strmodinqSIO}
\int_{\Rn}\Phi\left(|Tf(x)|\right)w(x)dx\leq C \int_{\Rn}\Phi\left(|f(x)|\right)w(x)dx
\end{equation}
for any locally integrable function $f$.
\end{thm}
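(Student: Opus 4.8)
The plan is to deduce \eqref{strmodinqSIO} from two ingredients which are then chained: the Coifman--Fefferman control of the singular integral by the maximal operator,
\begin{equation}\label{cfaux}
\int_{\Rn}\Phi(|Tf(x)|)\,w(x)\,dx \;\le\; C_1\int_{\Rn}\Phi(Mf(x))\,w(x)\,dx ,
\end{equation}
valid for every $w\in A_{\infty}$ and every $\Phi\in\Delta_2$, together with the strong modular inequality for $M$,
\begin{equation}\label{maux}
\int_{\Rn}\Phi(Mf(x))\,w(x)\,dx \;\le\; C_2\int_{\Rn}\Phi(|f(x)|)\,w(x)\,dx ,
\end{equation}
valid for $\Phi\in\Delta_2\cap\nabla_2$ and $w\in A_{i_{\Phi}}$. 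Since here $\Phi\in\Delta_2\cap\nabla_2$ and $A_{i_{\Phi}}\subset A_{\infty}$, composing \eqref{cfaux} and \eqref{maux} yields \eqref{strmodinqSIO}. (A more modern alternative for the whole statement would be Rubio de Francia extrapolation from the weighted $L^{q}$ bounds for $T$, but the route through $M$ is more self-contained here.)

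For \eqref{cfaux} I would run the classical good-$\lambda$ estimate of Coifman--Fefferman: for $w\in A_{\infty}$ there exist $\delta>0$ and $C>0$ such that
$$
w\bigl(\{x:|Tf(x)|>2\lambda,\ Mf(x)\le\gamma\lambda\}\bigr)\le C\,\gamma^{\delta}\,w\bigl(\{x:|Tf(x)|>\lambda\}\bigr),\qquad \lambda>0,\ 0<\gamma\le1 .
$$
Using the layer-cake identity $\int_{\Rn}\Phi(h)w\,dx=\int_0^{\infty}\Phi'(\lambda)\,w(\{h>\lambda\})\,d\lambda$ with $h=|Tf|$, the change of variables $\lambda\mapsto2\lambda$, the inclusion $\{|Tf|>2\lambda\}\subset\{|Tf|>2\lambda,\ Mf\le\gamma\lambda\}\cup\{Mf>\gamma\lambda\}$, the good-$\lambda$ bound, and the elementary consequences of $\Phi\in\Delta_2$ (namely $\Phi(\lambda)\le\lambda\Phi'(\lambda)\le\Phi(2\lambda)\le k\Phi(\lambda)$, whence $\Phi'(2\lambda)\le(k^{2}/2)\Phi'(\lambda)$ and, iterating, $\Phi'(c\lambda)\le C_{k,c}\Phi'(\lambda)$ for every $c>0$), one reaches
$$
\int_{\Rn}\Phi(|Tf|)w\,dx\le C\,k^{2}\gamma^{\delta}\int_{\Rn}\Phi(|Tf|)w\,dx+C_{\gamma}\int_{\Rn}\Phi(Mf)w\,dx .
$$
Fixing $\gamma$ so small that $C\,k^{2}\gamma^{\delta}<\tfrac12$ and absorbing the first term on the right gives \eqref{cfaux}. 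The absorption is legitimate only after $\int_{\Rn}\Phi(|Tf|)w\,dx<\infty$ is secured; I would do this by first proving \eqref{cfaux} for $f\in L^{\infty}_{\rm comp}(\Rn)$ — for such $f$ one has $Tf\in L^{p}(\Rn)$ for all $p\in(1,\infty)$ and $|Tf(x)|\lesssim|x|^{-n}$ at infinity, while $\Phi(t)\lesssim 1+t^{p_1}$ (upper type $p_1$) and $w\in A_{\infty}$ enjoys a reverse Hölder inequality, so the integral is finite — and then removing the restriction either by the density of $L^{\infty}_{\rm comp}(\Rn)$ in $L^{\Phi}_{w}(\Rn)$ (which holds because $\Phi\in\Delta_2$) or by applying the argument to the truncations $|Tf|\wedge N$ and letting $N\to\infty$.

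For \eqref{maux} I would use interpolation. By Remark~\ref{remlowup}, $\Phi\in\Delta_2\cap\nabla_2$ means $\Phi$ is of lower type $p_0$ and upper type $p_1$ with $1<p_0\le p_1<\infty$; in particular $i_{\Phi}>1$ and $I_{\Phi}<\infty$. Since $w\in A_{i_{\Phi}}$, the openness of the Muckenhoupt classes gives $w\in A_{i_{\Phi}-\varepsilon_0}$ for some $\varepsilon_0>0$, and since every exponent in $(0,i_{\Phi})$ is a lower type of $\Phi$ one may fix $q\in(1,i_{\Phi})$ with $q\ge i_{\Phi}-\varepsilon_0$, so that $w\in A_{q}$ and $\Phi$ is of lower type $q$. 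Then $M$ is bounded on $L^{q}(w\,dx)$ by the Muckenhoupt theorem and trivially on $L^{\infty}(w\,dx)=L^{\infty}$; since $(L^{q}(w\,dx),L^{\infty})$ is a Calderón couple and $L^{\Phi}_{w}(\Rn)$ is an interpolation space for it (its Boyd indices $i_{\Phi}$ and $I_{\Phi}$ satisfying $q\le i_{\Phi}\le I_{\Phi}<\infty$), $M$ is bounded on $L^{\Phi}_{w}(\Rn)$, and $\Phi\in\Delta_2$ upgrades this norm bound to the modular inequality \eqref{maux}. Equivalently, one may invoke the known strong modular boundedness of $M$ on weighted Orlicz spaces for $\Phi\in\Delta_2\cap\nabla_2$ and $w\in A_{i_{\Phi}}$, i.e.\ the strong-type companion of Theorem~\ref{MaxweigOrlcWMAx} (cf.\ \cite{GogKrb1994,GogKrb1995}).

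The part I expect to require the most care is not a single deep step but the interfaces: justifying the absorption in the good-$\lambda$ argument (hence the finiteness bookkeeping and the $L^{\infty}_{\rm comp}$ approximation) and verifying that $L^{\Phi}_{w}$ is an admissible interpolation endpoint. The role of the hypotheses is then transparent: $\Delta_2$ is what lets one move freely between the modular and the norm and what makes $\Phi'$ behave like a $\Delta_2$ density in the layer-cake computation, while $\nabla_2$ — equivalently $\widetilde{\Phi}\in\Delta_2$, i.e.\ $\Phi$ of lower type strictly above $1$ — is precisely what makes $M$, and hence $T$, strongly modularly bounded; this conclusion fails for $\Phi\in\Delta_2\setminus\nabla_2$ such as $\Phi(r)=r$.
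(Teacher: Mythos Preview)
The paper does not prove this theorem at all: it is quoted as \cite[Theorem~3.2]{GogKrb1995} and used as a black box throughout (the remarks immediately following it only comment on the history of the modular inequality), so there is no in-paper argument to compare yours against.

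Your sketch is a correct reconstruction of the standard proof, and is essentially the route taken in the cited source: a Coifman--Fefferman good-$\lambda$ inequality to dominate $T$ by $M$ at the modular level (this uses only $w\in A_\infty$ and $\Phi\in\Delta_2$), followed by the strong modular inequality for $M$ (this is where $\Phi\in\nabla_2$ and $w\in A_{i_\Phi}$ enter). The finiteness/absorption bookkeeping you flag is indeed the only delicate point in the good-$\lambda$ step, and your cure via $L^\infty_{\rm comp}$ and $\Phi\in\Delta_2$ is the standard one.

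One small inaccuracy worth noting: the claim that ``$\Phi\in\Delta_2$ upgrades the norm bound on $M$ to the modular inequality \eqref{maux}'' is not automatic. For a sublinear $S$, norm boundedness on $L^\Phi_w$ together with $\Phi\in\Delta_2$ yields $\int\Phi(|Sf|)w\lesssim 1$ whenever $\int\Phi(|f|)w\le 1$, but upgrading this to $\int\Phi(|Sf|)w\le C\int\Phi(|f|)w$ for \emph{all} $f$ with a uniform $C$ is not a formal consequence of $\Delta_2$, since $\Phi$ is not homogeneous. Your own alternative --- invoking the strong modular bound for $M$ directly from \cite{GogKrb1994,GogKrb1995} --- sidesteps this cleanly and is exactly what the paper implicitly relies on; if you prefer to keep the interpolation self-contained, run it at the distribution-function/modular level (Boyd or Marcinkiewicz in Orlicz form) rather than passing through norms.
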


\begin{rem}
It is well known that the modular inequality \eqref{strmodinqSIO} implies the boundedness on $L^{\Phi}_{w}(\Rn)$. However, the converse is not necessarily true.
\end{rem}

\begin{rem}
Nakai \cite{Nakai2} proved the unweighted version of modular inequality \eqref{strmodinqSIO}. Cianchi \cite{Cianchi} gave a necessary and sufficient condition of $\Phi$ and $\Psi$ for the boundedness from an Orlicz space $L^{\Phi}(\Rn)$ to another Orlicz space $L^{\Psi}(\Rn)$. He treated singular integral operators $T$ with the form
$$
Tf(x)=\lim_{\varepsilon\to 0^{+}}\int_{|y|>\varepsilon}\frac{\Omega(y)}{|y|^n}f(x-y)dy,
$$
where $\Omega$ is an odd function on $\Rn$ which is homogeneous of degree $0$ and satisfies the 'Dini-type' condition. These operators are Calder\'{o}n-Zygmund operators with $K(x,y)=\frac{\Omega(x-y)}{|x-y|^n}$.

Note that the modular inequality \eqref{strmodinqSIO} was also investigated by Poelhuis and Torchinsky in \cite[Theorem 5.3]{PoelTor}.
\end{rem}

\begin{lem}\label{lemHold}
	Let $\Phi$ be a Young function and $f \in L^{\Phi}_{w, {\rm loc}}(\Rn)$.
	Assume in addition $w \in A_{i_{\Phi}}$.
	For a ball $B$,
	the following inequality is valid:
	$$
	\|f\|_{L^{1}(B)} \lesssim |B|
	\Phi^{-1}\left(w(B)^{-1}\right) \|f\|_{L^{\Phi}_{w}(B)}.
	$$
\end{lem}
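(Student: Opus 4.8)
The claim is a weighted version of the standard fact that $L^\Phi$-control over a ball yields $L^1$-control, with the $A_\infty$ weight providing the bridge between the Orlicz modular and Lebesgue integration. The natural route is through the weighted Hölder inequality \eqref{Hldw}: write
$$
\int_B |f(x)|\,dx = \int_{\Rn} |f(x)|\chi_{_B}(x)\,w(x)\,w(x)^{-1}\,dx,
$$
and apply \eqref{Hldw} with the pair $\Phi,\widetilde\Phi$ to the functions $f$ and $w^{-1}\chi_{_B}$ (the weight in \eqref{Hldw} being $w$). This gives
$$
\|f\|_{L^1(B)} \le 2\,\|f\|_{L^\Phi_w(B)}\,\big\|w^{-1}\chi_{_B}\big\|_{L^{\widetilde\Phi}_w}.
$$
So the whole matter reduces to showing
$$
\big\|w^{-1}\chi_{_B}\big\|_{L^{\widetilde\Phi}_w} \lesssim |B|\,\Phi^{-1}\!\big(w(B)^{-1}\big).
$$

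For this last estimate I would first dispose of the case where $w$ is roughly constant and then invoke the $A_\infty$ machinery. More precisely, by definition of the Luxemburg norm it suffices to exhibit a $\lambda$ comparable to $|B|\,\Phi^{-1}(w(B)^{-1})$ with $\int_B \widetilde\Phi\big(w(x)^{-1}/\lambda\big)\,w(x)\,dx \le 1$. Using the elementary inequality $\widetilde\Phi(s)\le s\,\Phi^{-1}(?)$-type bounds is awkward directly; instead the cleaner device is the complementary-function identity \eqref{2.3}, which gives $\widetilde\Phi^{-1}(r)\le 2r/\Phi^{-1}(r)$, hence a pointwise bound on $\widetilde\Phi$ after inverting. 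Alternatively — and this is the approach I would actually push — one uses that for $w\in A_\infty$ there is a reverse Hölder / doubling-type comparison allowing one to replace $w(x)^{-1}$ on $B$ by an average: by Jensen's inequality applied to the convex function $\widetilde\Phi$,
$$
\frac{1}{w(B)}\int_B \widetilde\Phi\!\Big(\tfrac{w(x)^{-1}}{\lambda}\Big)w(x)\,dx \ \ge\ \widetilde\Phi\!\Big(\tfrac{1}{\lambda}\cdot\tfrac{|B|}{w(B)}\Big),
$$
which unfortunately runs the wrong direction, so Jensen must be used on $\Phi$ after rewriting; the correct path is to note $\int_B w^{-1}\cdot w = |B|$ and that $\|w^{-1}\chi_{_B}\|_{L^{\widetilde\Phi}_w}$ is, up to the normalization \eqref{charorlw}, governed by the "size" $|B|/w(B)$ together with the calibration $\widetilde\Phi^{-1}(w(B)^{-1})$. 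Combining \eqref{charorlw} (which handles $\chi_{_B}$ itself) with the observation that $w^{-1}\le C$ on a fixed ball only when $w$ is bounded below — which is not assumed — shows one genuinely needs the $A_\infty$ hypothesis here.

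The role of $w\in A_{i_\Phi}$: since $i_\Phi$ is the supremum of the lower types, $w\in A_{i_\Phi}$ places $w$ in some $A_p$ with $p$ close to the lower type of $\Phi$, and the $A_p$ condition
$$
\Big(\tfrac{1}{|B|}\int_B w\Big)\Big(\tfrac{1}{|B|}\int_B w^{-p'/p}\Big)^{p/p'} \le C
$$
is precisely what converts $\int_B w^{-1}$ (via reverse Hölder, $\int_B w^{-1}\lesssim |B|^2/w(B)$ when $p$ is near $1$, or more robustly via the $A_p$ bound applied with the conjugate exponent) into a quantity comparable to $|B|^2/w(B)$. Feeding $\int_B w^{-1}\lesssim |B|^2/w(B)$ into the Luxemburg-norm computation for $\|w^{-1}\chi_{_B}\|_{L^{\widetilde\Phi}_w}$, together with \eqref{2.3} to relate $\widetilde\Phi^{-1}$ and $\Phi^{-1}$ at the argument $w(B)^{-1}$, yields the desired bound $\|w^{-1}\chi_{_B}\|_{L^{\widetilde\Phi}_w}\lesssim |B|\,\Phi^{-1}(w(B)^{-1})$, and the lemma follows.

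**Main obstacle.** The delicate point is the estimate $\big\|w^{-1}\chi_{_B}\big\|_{L^{\widetilde\Phi}_w}\lesssim |B|\,\Phi^{-1}(w(B)^{-1})$: one must correctly marry the $A_p$/reverse-Hölder bound on $\int_B w^{-1}$ with the convexity identity \eqref{2.3} for the complementary Young function, being careful that the type/dilation-index hypotheses ($w\in A_{i_\Phi}$, together with whatever $\Delta_2/\nabla_2$ behavior is implicitly needed to make $\widetilde\Phi$ well-behaved) are actually sufficient. Everything else — the application of the weighted Hölder inequality \eqref{Hldw} and the bookkeeping with \eqref{charorlw} — is routine.
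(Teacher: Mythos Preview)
Your reduction via the weighted H\"older inequality \eqref{Hldw} to the estimate $\|w^{-1}\chi_{_B}\|_{L^{\widetilde\Phi}_w}\lesssim |B|\,\Phi^{-1}(w(B)^{-1})$ is a legitimate reformulation --- indeed, by Orlicz duality this dual-norm estimate is essentially equivalent to the lemma itself --- but you do not actually prove it. The claim that ``feeding $\int_B w^{-1}\lesssim |B|^2/w(B)$ into the Luxemburg-norm computation'' closes the argument is where the gap sits: the Luxemburg norm requires control of $\int_B \widetilde\Phi(w(x)^{-1}/\lambda)\,w(x)\,dx$, and a bound on $\int_B w^{-1}$ alone says nothing about this integral for a general $\widetilde\Phi$. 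The natural way to push it through --- bounding $\widetilde\Phi(st)\lesssim (t^{p_0'}+t^{p_1'})\widetilde\Phi(s)$ via upper and lower type and then invoking the $A_{p_i}$ conditions on $w$, exactly the mechanism the paper uses later for the related estimate \eqref{esbjm} --- requires $\Phi\in\Delta_2\cap\nabla_2$. You correctly flag this concern in your ``main obstacle'' paragraph, but the lemma assumes only that $\Phi$ is a Young function with $w\in A_{i_\Phi}$; no $\Delta_2$ or $\nabla_2$ hypothesis is available, so your sketch does not reach the stated generality.

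The paper sidesteps the complementary-function estimate entirely by using the weak-type boundedness of the maximal operator (Theorem~\ref{MaxweigOrlcWMAx}), which holds for \emph{any} Young function once $w\in A_{i_\Phi}$. From the elementary pointwise bound $\frac{1}{|B|}\|f\|_{L^1(B)}\,\chi_{_B}(x)\lesssim M(f\chi_{_B})(x)$ one takes $WL^\Phi_w$ norms of both sides, applies the weak maximal inequality, and reads off the conclusion using \eqref{charorlw}. This is both shorter and strictly more general than the H\"older route you propose.
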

\begin{proof}
	Let
	$$\mathfrak{M}f(x)=\sup_{B\in\mathcal{B}}\frac{\chi_{_B}(x)}{|B|}\int_{B}|f(y)|dy,\quad x\in \Rn$$
	and $\tilde{f}$ denotes the extension of $f$ from $B$ to $\Rn$ by zero. It is well known that $\mathfrak{M}f(x)\leq 2^n Mf(x)$ for all $x\in\Rn$. Then using Theorem \ref{MaxweigOrlcWMAx}, we have
	\begin{align*}
	&\frac{\|f\|_{L^1(B)}}{|B|}
	\|\chi_B\|_{WL^\Phi_w(B)}
	=\frac{\|\tilde{f}\|_{L^1(B)}}{|B|}
	\|\chi_B\|_{WL^\Phi_w(B)} \lesssim
	\|\mathfrak{M}\tilde{f}\|_{WL^\Phi_w(B)}
	\\
	&\lesssim
	\|M\tilde{f}\|_{WL^\Phi_w(B)}
	\le
	\|M\tilde{f}\|_{WL^\Phi_w(\Rn)}
	\lesssim
	\|\tilde{f}\|_{L^{\Phi}_{w}(\Rn)}=\|f\|_{L^{\Phi}_{w}(B)}.
	\end{align*}
	So, Lemma \ref{lemHold} is proved.
\end{proof}

The following boundedness result for the Hardy-Littlewood maximal operator on generalized weighted Orlicz-Morrey spaces is valid.
\begin{thm}\label{thm4.4.max}\cite{DerGulHasTJM}
Let $\Phi\in\nabla_2$ and $w \in A_{i_{\Phi}}$. If $\varphi_1\in{\mathcal{G}}^{\Phi}_w$, then the condition
\begin{equation}\label{condMnec}
\varphi_1(x,r)\le C \varphi_2(x,r),
\end{equation}
where $C$ does not depend on $x$ and $r$,
is necessary and sufficient for the boundedness of $M$ from $M^{\Phi,\varphi_1}_{w}({\mathbb R}^n)$ to $M^{\Phi,\varphi_2}_{w}({\mathbb R}^n)$.
\end{thm}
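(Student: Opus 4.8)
The plan is to prove the two implications separately: necessity of \eqref{condMnec} by testing on characteristic functions, and sufficiency by first establishing a local estimate for $Mf$ on an arbitrary ball and then globalizing it with the help of the class $\mathcal{G}_{\Phi}^{w}$.

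For necessity I would assume $M\colon M^{\Phi,\varphi_1}_{w}\to M^{\Phi,\varphi_2}_{w}$ is bounded, fix a ball $B_0=B(x_0,r_0)$, and apply the operator to $f=\chi_{B_0}$. Since $B_0\subset B(x,2r_0)$ for every $x\in B_0$, one has $M\chi_{B_0}\ge 2^{-n}\chi_{B_0}$, so $\|M\chi_{B_0}\|_{M^{\Phi,\varphi_2}_{w}}\gtrsim\|\chi_{B_0}\|_{M^{\Phi,\varphi_2}_{w}}\ge\varphi_2(B_0)^{-1}$ by the unconditional lower bound in Lemma~\ref{charwOrlMor}; on the other hand $\|\chi_{B_0}\|_{M^{\Phi,\varphi_1}_{w}}\lesssim\varphi_1(B_0)^{-1}$ by the upper bound in the same lemma (this is where $\varphi_1\in\mathcal{G}_{\Phi}^{w}$ enters). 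Combining these with the operator bound gives $\varphi_1(B_0)\lesssim\varphi_2(B_0)$, and since $B_0$ is arbitrary this is \eqref{condMnec}.

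For sufficiency the engine is the local (Guliyev-type) estimate
\[
\|Mf\|_{L^{\Phi}_{w}(B)}\lesssim\frac{1}{\Phi^{-1}\big(w(B)^{-1}\big)}\,\sup_{t>r}\Phi^{-1}\big(w(B(x_0,t))^{-1}\big)\,\|f\|_{L^{\Phi}_{w}(B(x_0,t))}
\]
for every ball $B=B(x_0,r)$ and every $f\in M^{\Phi,\varphi_1}_{w}$; call the right-hand side $\Phi^{-1}\big(w(B)^{-1}\big)^{-1}A$. To get it I would split $f=f\chi_{2B}+f\chi_{(2B)^{c}}$. For the local part I would use that $M$ is bounded on $L^{\Phi}_{w}(\Rn)$ when $\Phi\in\nabla_2$ and $w\in A_{i_{\Phi}}$ (which follows from the $\nabla_2$-condition together with Theorem~\ref{MaxweigOrlcWMAx}), so $\|M(f\chi_{2B})\|_{L^{\Phi}_{w}(B)}\le\|M(f\chi_{2B})\|_{L^{\Phi}_{w}(\Rn)}\lesssim\|f\|_{L^{\Phi}_{w}(2B)}$. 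For the tail, if $x\in B$ and $B(x,\rho)\cap(2B)^{c}\ne\emptyset$ then necessarily $\rho>r$ and $B(x,\rho)\subset B(x_0,2\rho)$, so Lemma~\ref{lemHold} gives $|B(x,\rho)|^{-1}\int_{B(x,\rho)\cap(2B)^{c}}|f|\lesssim\Phi^{-1}\big(w(B(x_0,2\rho))^{-1}\big)\|f\|_{L^{\Phi}_{w}(B(x_0,2\rho))}$; taking the supremum over $\rho>r$ bounds $M(f\chi_{(2B)^{c}})$ on $B$ by a constant multiple of $A$, and then \eqref{charorlw} gives $\|M(f\chi_{(2B)^{c}})\|_{L^{\Phi}_{w}(B)}\lesssim A\,\Phi^{-1}\big(w(B)^{-1}\big)^{-1}$. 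Finally, taking $t=2r$ in $A$ and combining the elementary convexity bound $\Phi^{-1}(\theta s)\ge\theta\,\Phi^{-1}(s)$ for $0<\theta<1$ with the doubling of $w$ shows that the local term $\|f\|_{L^{\Phi}_{w}(2B)}$ is also $\lesssim A\,\Phi^{-1}\big(w(B)^{-1}\big)^{-1}$, and adding the two contributions proves the displayed estimate.

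To conclude I would bound each factor inside the supremum by the $M^{\Phi,\varphi_1}_{w}$-norm, namely $\Phi^{-1}\big(w(B(x_0,t))^{-1}\big)\|f\|_{L^{\Phi}_{w}(B(x_0,t))}\le\varphi_1(x_0,t)\,\|f\|_{M^{\Phi,\varphi_1}_{w}}$, so that the local estimate becomes $\|Mf\|_{L^{\Phi}_{w}(B)}\lesssim\Phi^{-1}\big(w(B)^{-1}\big)^{-1}\|f\|_{M^{\Phi,\varphi_1}_{w}}\sup_{t>r}\varphi_1(x_0,t)$; the first defining property of $\mathcal{G}_{\Phi}^{w}$, applied with $B_0=B(x_0,t)$ and the smaller ball $B(x_0,r)$, gives $\varphi_1(x_0,t)\lesssim\varphi_1(x_0,r)$ for all $t\ge r$, hence $\sup_{t>r}\varphi_1(x_0,t)\lesssim\varphi_1(x_0,r)$, and then \eqref{condMnec} together with a supremum over all balls yields $\|Mf\|_{M^{\Phi,\varphi_2}_{w}}\lesssim\|f\|_{M^{\Phi,\varphi_1}_{w}}$. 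I expect the main obstacle to be the local estimate itself—in particular the tail bound, which has to marry Lemma~\ref{lemHold} with the ball geometry, and the input that $M$ is bounded on $L^{\Phi}_{w}(\Rn)$, which is precisely where $\Phi\in\nabla_2$ is indispensable; by contrast $\mathcal{G}_{\Phi}^{w}$ only does the bookkeeping (collapsing the supremum over scales to $\varphi_1(x_0,r)$ and, via Lemma~\ref{charwOrlMor}, making the characteristic-function test sharp), and \eqref{condMnec} is exactly what the general condition \eqref{eq3.6.VZMaxGO} degenerates to under $\varphi_1\in\mathcal{G}_{\Phi}^{w}$.
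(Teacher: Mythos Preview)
The paper does not actually prove this theorem: it is quoted from \cite{DerGulHasTJM} and stated without proof, so there is no ``paper's own proof'' to compare against. That said, your argument is correct and is essentially the standard route one expects in the cited reference. The necessity via testing on $\chi_{B_0}$ together with Lemma~\ref{charwOrlMor} is exactly right (and you correctly observe that only the upper bound of that lemma needs $\varphi_1\in\mathcal{G}_{\Phi}^{w}$). For sufficiency, the splitting $f=f\chi_{2B}+f\chi_{(2B)^c}$, the use of the strong $L^{\Phi}_{w}$-boundedness of $M$ under $\Phi\in\nabla_2$ for the local piece, the pointwise tail bound via Lemma~\ref{lemHold}, and the collapse of $\sup_{t>r}\varphi_1(x_0,t)$ to $\varphi_1(x_0,r)$ via the first defining inequality of $\mathcal{G}_{\Phi}^{w}$ all go through; your doubling/convexity step to absorb $\|f\|_{L^{\Phi}_{w}(2B)}$ into $A/\Phi^{-1}(w(B)^{-1})$ is fine since $w\in A_{i_{\Phi}}\subset A_{\infty}$ is doubling. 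The only point worth tightening is the parenthetical ``follows from the $\nabla_2$-condition together with Theorem~\ref{MaxweigOrlcWMAx}'': the strong boundedness of $M$ on $L^{\Phi}_{w}$ under $\Phi\in\nabla_2$, $w\in A_{i_{\Phi}}$ is indeed known, but it is not a one-line consequence of the weak modular inequality; you should cite it directly (e.g.\ from \cite{GogKrb1994} or \cite{DerGulHasTJM}) rather than suggest it drops out of Theorem~\ref{MaxweigOrlcWMAx}.
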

If we take $\varphi_1=\varphi_2=\varphi$ at Theorem \ref{thm4.4.max} we get the following corollary.
\begin{cor}\label{thm4.4.maxcor}
If $\Phi\in\nabla_2$, $w \in A_{i_{\Phi}}$ and $\varphi\in{\mathcal{G}}_{\Phi}^{w}$, then the Hardy-Littlewood maximal operator $M$ is bounded on $M^{\Phi,\varphi}_{w}({\mathbb R}^n)$.
\end{cor}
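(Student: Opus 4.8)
The plan is to obtain this as an immediate specialization of Theorem~\ref{thm4.4.max}. First I would set $\varphi_1 = \varphi_2 = \varphi$ in that theorem. The hypothesis $\varphi \in {\mathcal{G}}_{\Phi}^{w}$ of the corollary is then exactly the requirement $\varphi_1 \in {\mathcal{G}}_{\Phi}^{w}$ appearing there, and the comparison condition \eqref{condMnec} becomes $\varphi(x,r) \le C\,\varphi(x,r)$, which holds trivially with $C = 1$. Since $\Phi \in \nabla_2$ and $w \in A_{i_{\Phi}}$ are precisely the standing assumptions of Theorem~\ref{thm4.4.max}, its conclusion yields that $M$ is bounded from $M^{\Phi,\varphi}_{w}(\Rn)$ to $M^{\Phi,\varphi}_{w}(\Rn)$, which is the assertion.

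There is essentially no obstacle in this argument; the only thing to verify is the harmless notational point that the classes ${\mathcal{G}}^{\Phi}_{w}$ and ${\mathcal{G}}_{\Phi}^{w}$ used in the statement of Theorem~\ref{thm4.4.max} and in the corollary denote the same set of functions. If one prefers a self-contained route instead of invoking the sufficiency half of Theorem~\ref{thm4.4.max}, one could alternatively argue directly: using that $\Phi \in \nabla_2$ together with $w \in A_{i_{\Phi}}$ gives the boundedness of $M$ on $L^{\Phi}_{w}(\Rn)$ (e.g.\ via the weak modular inequality of Theorem~\ref{MaxweigOrlcWMAx}, Remark~\ref{remwmi}, and duality), one derives a local estimate of the form $\|Mf\|_{L^{\Phi}_{w}(B)} \lesssim \sup_{B'\supseteq B}\Phi^{-1}\big(w(B')^{-1}\big)^{-1}\|f\|_{L^{\Phi}_{w}(B')}$ for balls $B$, and then closes the estimate by feeding in the two defining inequalities of ${\mathcal{G}}_{\Phi}^{w}$. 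But the one-line deduction from Theorem~\ref{thm4.4.max} is the cleanest and is what I would record as the proof.
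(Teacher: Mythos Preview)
Your proposal is correct and matches the paper's own approach exactly: the corollary is stated immediately after Theorem~\ref{thm4.4.max} with the remark that it follows by taking $\varphi_1=\varphi_2=\varphi$, which is precisely your one-line deduction. Your observation that condition~\eqref{condMnec} becomes trivial with $C=1$ and that the notations ${\mathcal{G}}^{\Phi}_{w}$ and ${\mathcal{G}}_{\Phi}^{w}$ denote the same class are both accurate.
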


We recall that $M$ is weak generalized weighted Orlicz-Morrey bounded as the following theorem shows:
\begin{thm}\label{thm4.4.maxw}\cite{DerGulHasTJM}
Let $\Phi$ be a Young function and $w \in A_{i_{\Phi}}$. If $\varphi_1\in{\mathcal{G}}^{\Phi}_w$, then the condition \eqref{condMnec}
is necessary and sufficient for the boundedness of $M$ from $M^{\Phi,\varphi_1}_{w}({\mathbb R}^n)$ to $WM^{\Phi,\varphi_2}_{w}({\mathbb R}^n)$.
\end{thm}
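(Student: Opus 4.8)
The plan is to prove the two implications separately: necessity by testing $M$ on characteristic functions, and sufficiency by first reducing to the case $\varphi_1=\varphi_2$ and then proving a pointwise‑in‑the‑ball (``local'') weak estimate for $Mf$.

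For necessity, assume $M$ is bounded from $M^{\Phi,\varphi_1}_{w}$ to $WM^{\Phi,\varphi_2}_{w}$ and fix $B_0=B(x_0,r_0)$. Since $\varphi_1\in{\mathcal{G}}_{\Phi}^{w}$, Lemma~\ref{charwOrlMor} gives $\|\chi_{B_0}\|_{M^{\Phi,\varphi_1}_{w}}\lesssim \varphi_1(B_0)^{-1}$. On the other hand, for every $x\in B_0$ the ball $B(x,2r_0)$ contains $B_0$, so $M\chi_{B_0}\geq 2^{-n}\chi_{B_0}$, and hence, taking $B=B_0$ in the supremum defining the weak norm and using \eqref{charorlw}, $\|M\chi_{B_0}\|_{WM^{\Phi,\varphi_2}_{w}}\gtrsim \varphi_2(B_0)^{-1}$ (this lower bound requires no hypothesis on $\varphi_2$). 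Combining these with the boundedness of $M$ yields $\varphi_2(B_0)^{-1}\lesssim\varphi_1(B_0)^{-1}$, i.e.\ \eqref{condMnec}.

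For sufficiency, note first that \eqref{condMnec} forces $\|g\|_{WM^{\Phi,\varphi_2}_{w}}\lesssim\|g\|_{WM^{\Phi,\varphi_1}_{w}}$ for every $g$, so it suffices to bound $M$ from $M^{\Phi,\varphi_1}_{w}$ into $WM^{\Phi,\varphi_1}_{w}$; write $\varphi:=\varphi_1\in{\mathcal{G}}_{\Phi}^{w}$. Fix $B_0=B(x_0,r)$ and split $f=f_1+f_2$ with $f_1=f\chi_{2B_0}$. Theorem~\ref{MaxweigOrlcWMAx} and Remark~\ref{remwmi} give $\|Mf_1\|_{WL^{\Phi}_{w}(B_0)}\leq\|Mf_1\|_{WL^{\Phi}_{w}(\Rn)}\lesssim\|f\|_{L^{\Phi}_{w}(2B_0)}$. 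For $f_2$: if $x\in B_0$ and a ball $B(x,\rho)$ meets $(2B_0)^{c}$ then $\rho>r$ and $B(x,\rho)\subset B(x_0,2\rho)$, so $Mf_2(x)\lesssim\sup_{t>r}|B(x_0,t)|^{-1}\int_{B(x_0,t)}|f|$, and Lemma~\ref{lemHold} (this is where $w\in A_{i_{\Phi}}$ is used) bounds this by
\[
a:=\sup_{t>r}\Phi^{-1}\big(w(B(x_0,t))^{-1}\big)\,\|f\|_{L^{\Phi}_{w}(B(x_0,t))}.
\]
Since $a$ is constant on $B_0$, \eqref{charorlw} yields $\|Mf_2\|_{WL^{\Phi}_{w}(B_0)}\lesssim a\,\Phi^{-1}(w(B_0)^{-1})^{-1}$; moreover $\|f\|_{L^{\Phi}_{w}(2B_0)}\leq\Phi^{-1}(w(2B_0)^{-1})^{-1}a$, and since $w$ is doubling and $\Phi^{-1}$ is concave one has $\Phi^{-1}(w(B_0)^{-1})\leq\Phi^{-1}(C_w\,w(2B_0)^{-1})\leq C_w\,\Phi^{-1}(w(2B_0)^{-1})$, hence $\|Mf_1\|_{WL^{\Phi}_{w}(B_0)}\lesssim a\,\Phi^{-1}(w(B_0)^{-1})^{-1}$ as well. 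Adding the two pieces — the weak Orlicz quasi‑norm satisfies $\|g+h\|_{WL^{\Phi}_{w}}\lesssim\|g\|_{WL^{\Phi}_{w}}+\|h\|_{WL^{\Phi}_{w}}$ with a universal constant, which follows directly from $m(w,g+h,t)\leq m(w,g,t/2)+m(w,h,t/2)$ together with the convexity and monotonicity of $\Phi$ — gives the local estimate $\|Mf\|_{WL^{\Phi}_{w}(B_0)}\lesssim a\,\Phi^{-1}(w(B_0)^{-1})^{-1}$.

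To finish, multiply the local estimate by $\varphi(B_0)^{-1}\Phi^{-1}(w(B_0)^{-1})$, bound $\Phi^{-1}(w(B(x_0,t))^{-1})\|f\|_{L^{\Phi}_{w}(B(x_0,t))}\leq\varphi(B(x_0,t))\,\|f\|_{M^{\Phi,\varphi}_{w}}$ from the definition of the norm, invoke the first defining property of ${\mathcal{G}}_{\Phi}^{w}$ to get $\sup_{t>r}\varphi(B(x_0,t))\lesssim\varphi(B_0)$, and take the supremum over all $B_0$; this gives $\|Mf\|_{WM^{\Phi,\varphi}_{w}}\lesssim\|f\|_{M^{\Phi,\varphi}_{w}}$. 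The crux is the local estimate, and inside it the two delicate points are the pointwise control of the tail $Mf_2$ via Lemma~\ref{lemHold} (precisely where the $A_{i_{\Phi}}$ assumption enters) and the passage from $w(2B_0)$ back to $w(B_0)$ through doubling and the concavity of $\Phi^{-1}$. It is worth stressing that — in contrast with the strong‑type statement of Theorem~\ref{thm4.4.max} — no $\nabla_2$ (nor $\Delta_2$) condition on $\Phi$ is needed here, since Theorem~\ref{MaxweigOrlcWMAx} already disposes of the local part $Mf_1$ for an arbitrary Young function.
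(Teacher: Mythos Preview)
The paper does not supply its own proof of this statement; it is quoted from \cite{DerGulHasTJM} without argument, so there is no in-paper proof to compare against. Your argument is correct and proceeds along the natural lines: necessity by testing on $\chi_{B_0}$ (Lemma~\ref{charwOrlMor} for the upper bound on the domain side, and the elementary lower bound $\|\chi_{B_0}\|_{WM^{\Phi,\varphi_2}_w}\ge\varphi_2(B_0)^{-1}$ on the target side), and sufficiency by the local splitting $f=f\chi_{2B_0}+f\chi_{(2B_0)^c}$, with Theorem~\ref{MaxweigOrlcWMAx} together with Remark~\ref{remwmi} handling the near part and Lemma~\ref{lemHold} controlling the tail, followed by the almost-decreasing property encoded in the first condition of $\mathcal{G}^w_\Phi$. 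This is precisely the template the paper itself deploys for $T$ in Theorems~\ref{sufsio} and~\ref{sufsiow} (and, for the doubling estimate $\Phi^{-1}(w(B_0)^{-1})\lesssim\Phi^{-1}(w(2B_0)^{-1})$, compare your concavity argument with the paper's maximal-function trick in \eqref{es3}), so your approach is fully consistent with the surrounding methodology. One small comment on your quasi-triangle inequality for $WL^{\Phi}_w$: after $m(w,g+h,t)\le m(w,g,t/2)+m(w,h,t/2)$ one arrives at $\sup_t\Phi(t)m(w,\cdot,t)\le 2$ rather than $\le 1$, and the final halving step uses $\Phi(2t)\ge 2\Phi(t)$, which is indeed a consequence of convexity and $\Phi(0)=0$; so the claim is valid, but ``monotonicity'' alone would not suffice.
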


%

Sufficient conditions on $(\Phi,\varphi_1,\varphi_2,w)$ for the boundedness of the Calder\'{o}n-Zygmund operators from one generalized weighted Orlicz-Morrey space $M^{\Phi,\varphi_1}_{w}$ to another $M^{\Phi,\varphi_2}_{w}$ as stated in the following theorem.
\begin{thm}\label{sufsio}
Let $\varphi_1,\varphi_2$ be positive measurable functions on $\Rn\times (0,\i)$ with satisfying the condition
\begin{equation}\label{es1}
\varphi_1(x,2r)\leq C\varphi_2(x,r),
\end{equation}
where $C$ does not depend on $x\in\Rn$ and $r>0$. Let also $\Phi$ be a Young function with $\Phi\in\Delta_2\cap\nabla_2$ and $w \in A_{i_{\Phi}}$. Then the condition
\begin{equation}\label{wgtcond}
\int_{r}^{\infty}\varphi_1(x,t)\frac{dt}{t}\le C \varphi_2(x,r),
\end{equation}
where $C$ does not depend on $x$ and $r$, is sufficient for the boundedness of Calder\'{o}n-Zygmund operator $T$ from $M^{\Phi,\varphi_1}_{w}({\mathbb R}^n)$ to $M^{\Phi,\varphi_2}_{w}({\mathbb R}^n)$.
\end{thm}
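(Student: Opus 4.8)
The plan is to follow the now-standard Guliyev-type two-step scheme. The heart of the argument is a \emph{local estimate}: for an arbitrary ball $B=B(x_0,r)$ I want to prove
\begin{align*}
\Phi^{-1}\big(w(B)^{-1}\big)\,\|Tf\|_{L^{\Phi}_{w}(B)}
&\lesssim\Phi^{-1}\big(w(B)^{-1}\big)\,\|f\|_{L^{\Phi}_{w}(2B)}\\
&\quad+\int_{2r}^{\i}\Phi^{-1}\big(w(B(x_0,t))^{-1}\big)\,\|f\|_{L^{\Phi}_{w}(B(x_0,t))}\,\frac{dt}{t}.
\end{align*}
Granting this, one multiplies both sides by $\varphi_2(x_0,r)^{-1}$ and takes the supremum over all balls. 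For the first summand one bounds $\|f\|_{L^{\Phi}_{w}(2B)}\le\varphi_1(x_0,2r)\,\Phi^{-1}(w(2B)^{-1})^{-1}\,\|f\|_{M^{\Phi,\varphi_1}_{w}}$, uses \eqref{es1} to control $\varphi_2(x_0,r)^{-1}\varphi_1(x_0,2r)$, and uses the doubling of $w\in A_\infty$ (together with convexity of $\Phi$) to control $\Phi^{-1}(w(B)^{-1})/\Phi^{-1}(w(2B)^{-1})$. For the second summand one writes $\Phi^{-1}(w(B(x_0,t))^{-1})\,\|f\|_{L^{\Phi}_{w}(B(x_0,t))}\le\varphi_1(x_0,t)\,\|f\|_{M^{\Phi,\varphi_1}_{w}}$ straight from the definition of the Morrey norm, and then \eqref{wgtcond} gives $\varphi_2(x_0,r)^{-1}\int_{2r}^{\i}\varphi_1(x_0,t)\,\frac{dt}{t}\lesssim 1$; taking the supremum yields $\|Tf\|_{M^{\Phi,\varphi_2}_{w}}\lesssim\|f\|_{M^{\Phi,\varphi_1}_{w}}$.

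To obtain the local estimate I split $f=f_1+f_2$ with $f_1=f\chi_{2B}$ and $f_2=f\chi_{\Rn\setminus 2B}$. Since $\Phi\in\Delta_2\cap\nabla_2$ and $w\in A_{i_\Phi}$, Theorem \ref{SIOweigOrlcold} gives the $L^{\Phi}_{w}(\Rn)$-boundedness of $T$, so $\|Tf_1\|_{L^{\Phi}_{w}(B)}\le\|Tf_1\|_{L^{\Phi}_{w}(\Rn)}\lesssim\|f\|_{L^{\Phi}_{w}(2B)}$, which is the first summand. For $x\in B$ the function $Tf_2(x)$ equals $\int_{\Rn\setminus 2B}K(x,y)f(y)\,dy$, and the size condition (i) on $K$ together with $|x-y|\approx|x_0-y|$ for $x\in B$, $y\notin 2B$, gives $|Tf_2(x)|\lesssim\int_{\Rn\setminus 2B}|x_0-y|^{-n}|f(y)|\,dy$; decomposing $\Rn\setminus 2B$ into the dyadic annuli $2^{k+1}B\setminus 2^{k}B$, on which $|x_0-y|\approx 2^{k}r$, and turning the resulting series into an integral yields
\begin{equation*}
|Tf_2(x)|\lesssim\int_{2r}^{\i}\Big(\frac{1}{t^{n}}\int_{B(x_0,t)}|f(y)|\,dy\Big)\frac{dt}{t},\qquad x\in B.
\end{equation*}
By Lemma \ref{lemHold} applied on the ball $B(x_0,t)$ (here $w\in A_{i_\Phi}$ is used) and $|B(x_0,t)|\approx t^{n}$, the inner average is $\lesssim\Phi^{-1}(w(B(x_0,t))^{-1})\,\|f\|_{L^{\Phi}_{w}(B(x_0,t))}$, so the last display is bounded by a constant $A$ independent of $x\in B$; hence, by \eqref{charorlw}, $\|Tf_2\|_{L^{\Phi}_{w}(B)}\le A\,\|\chi_B\|_{L^{\Phi}_{w}}=A\,\Phi^{-1}(w(B)^{-1})^{-1}$. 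Adding the two contributions and multiplying by $\Phi^{-1}(w(B)^{-1})$ gives the local estimate.

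I expect the main obstacle to be the local estimate itself, and within it two routine-but-delicate points. The first is the passage from the dyadic series $\sum_{k}(2^{k}r)^{-n}\int_{B(x_0,2^{k+1}r)}|f|$ to the integral $\int_{2r}^{\i}(\cdots)\,\frac{dt}{t}$: the annuli must be indexed so that $t^{-n}$ genuinely dominates $|x_0-y|^{-n}$ over the matching range of $t$, so that summing $\int_{2^{k+1}r}^{2^{k+2}r}\frac{dt}{t}=\log 2$ produces no loss. The second is the family of comparisons $\Phi^{-1}(w(B')^{-1})\approx\Phi^{-1}(w(B)^{-1})$ for balls $B\subset B'$ of comparable radii, together with the subordinate fact that $\Phi^{-1}(\lambda s)\lesssim\Phi^{-1}(s)$ with a constant depending on $\lambda\ge 1$; these follow from the doubling property of $A_\infty$ weights and from the convexity of $\Phi$, the hypothesis $\Phi\in\Delta_2\cap\nabla_2$ being already in force through Theorem \ref{SIOweigOrlcold}. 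Everything else is bookkeeping, the H\"older-type inequality \eqref{Hldw} having been packaged already into Lemma \ref{lemHold}.
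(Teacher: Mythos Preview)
Your boundedness argument is essentially the same as the paper's: the same splitting $f=f_1+f_2$ with $f_1=f\chi_{2B}$, Theorem~\ref{SIOweigOrlcold} for the local part, the kernel size estimate plus Lemma~\ref{lemHold} for the far part, and then \eqref{es1}--\eqref{wgtcond} to close. The only cosmetic differences are that the paper converts $\int_{\dual{(2B)}}|x_0-y|^{-n}|f(y)|\,dy$ into $\int_{2r}^{\i}\|f\|_{L^1(B(x_0,t))}\,t^{-n-1}\,dt$ via Fubini rather than via dyadic annuli, and that it proves the comparison $\Phi^{-1}(w(B)^{-1})\lesssim\Phi^{-1}(w(2B)^{-1})$ by the pointwise inequality $\chi_{2B}\lesssim M\chi_B$ together with Theorem~\ref{MaxweigOrlcWMAx} (see \eqref{es3}) rather than by $A_\infty$-doubling and concavity of $\Phi^{-1}$; both routes are fine.

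There is, however, one point you have not addressed and which the paper treats carefully: the \emph{definition} of $Tf$ for $f\in M^{\Phi,\varphi_1}_{w}$. As noted in Remark~\ref{GURB01}, $L^{\infty}_{\rm comp}$ is not dense in Morrey-type spaces, so one cannot extend $T$ by density; one must set $Tf(x):=T_0(f\chi_{2B})(x)+\int_{\Rn\setminus 2B}K(x,y)f(y)\,dy$ for a ball $B\ni x$, check absolute convergence of the integral (this is exactly your estimate for $Tf_2$, so you already have it), and verify that the value is independent of the choice of $B$. Your line ``for $x\in B$ the function $Tf_2(x)$ equals $\int_{\Rn\setminus 2B}K(x,y)f(y)\,dy$'' presupposes that $Tf$ is already defined and that this identity holds, which is precisely what needs justification. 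Once you add this definability discussion, your proof matches the paper's.
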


\begin{proof}
Taking into account Remark \ref{GURB01} we proceed as in the following.

For the moment, we denote the singular integral operator on $L^{\Phi}_{w}(\Rn)$ by $T_{0}$ to avoid confusion.
For $f\in M^{\Phi,\varphi_1}_{w}({\mathbb R}^n)$ and $x\in\Rn$ we choose a ball $B=B(x_0,r)\in \mathcal{B}$ such that $x\in B$, and let
\begin{equation}\label{defsinmor}
Tf(x):=T_{0}f_{1}(x)+\int_{\Rn}K(x,y)f_{2}(y)dy,\qquad f=f_1+f_2,~~f_1=f\chi _{2B}.
\end{equation}

First we show that $Tf(x)$ is well-defined $a.e.$ $x$ and independent of the choice $B$ containing $x$.

As $T_{0}$ is bounded on $L^{\Phi}_{w}(\Rn)$ provided by Theorem \ref{SIOweigOrlcold} and $f_1\in L^{\Phi}_{w}(\Rn)$, $T_{0}f_{1}$ is well-defined.

Next, we show that the second-term of the right-hand side defining $Tf(x)$ converges absolutely for any $f\in M^{\Phi,\varphi_1}_{w}({\mathbb R}^n)$ and almost every $x\in\Rn$.

Observe  that the inclusions   $x\in B$, $y\in \Rn\setminus 2B=\dual{(2B)}$ imply
$\frac{1}{2}|x_0-y|\le |x-y|\le\frac{3}{2}|x_0-y|$. Then we get
$$
\int_{\Rn}|K(x,y)f_{2}(y)|dy \lesssim \int_{\dual {(2B)}}\frac{|f(y)|}{|x-y|^{n}}dy \lesssim \int_{\dual {(2B)}}\frac{|f(y)|}{|x_0-y|^{n}}dy.
$$

By Fubini's theorem we have
\begin{equation*}
\begin{split}
\int_{\dual {(2B)}}\frac{|f(y)|}{|x_0-y|^{n}}dy &
\thickapprox
\int_{\dual {(2B)}}|f(y)|\int_{|x_0-y|}^{\i}\frac{dt}{t^{n+1}}dy
\\
&\thickapprox \int_{2r}^{\i}\int_{2r\leq |x_0-y|< t}|f(y)|dy\frac{dt}{t^{n+1}}
\\
&\lesssim \int_{2r}^{\i}\int_{B(x_0,t) }|f(y)|dy\frac{dt}{t^{n+1}}.
\end{split}
\end{equation*}
Applying Lemma \ref{lemHold}, we get
\begin{equation}\label{sal00}
\int_{\dual {(2B)}}\frac{|f(y)|}{|x_0-y|^{n}}dy \lesssim
\int_{2r}^{\i}\|f\|_{L^{\Phi}_{w}(B(x_0,t))} \Phi^{-1}\big(w(B(x_0,t))^{-1}\big) \frac{dt}{t}.
\end{equation}
Lastly, by condition \eqref{wgtcond} we obtain for all $x\in B$
\begin{equation}\label{sal00xyz}
\int_{\Rn}|K(x,y)f_{2}(y)dy| \lesssim \|f\|_{M^{\Phi,\varphi_1}_{w}}\int_{r}^{\i}\varphi_1(x_0,t)\frac{dt}{t}\lesssim \|f\|_{M^{\Phi,\varphi_1}_{w}}\,\varphi_2(x_0,r)<\infty.
\end{equation}

Finally it remains to show that the definition is independent of the choice of $B$. That is, if $B_{1},{B_2}\in \mathcal{B}$ and $x\in B_{1}\cap{B_2}$, then
\begin{equation}\label{indchB}
T_{0}(f\chi _{2B_1})(x)+\int_{\Rn\setminus 2B_1}K(x,y)f(y)dy=T_{0}(f\chi _{2B_2})(x)+\int_{\Rn\setminus 2B_2}K(x,y)f(y)dy.
\end{equation}
Actually, let $B_3\in\mathcal{B}$ be selected so that $2B_{1}\cup{2B_2}\subset B_3$.
Since $f\chi _{2B_1}, f\chi _{B_3\setminus 2B_1} \in L^{\Phi}_{w}(\Rn)$, the linearity of $T_0$ on $L^{\Phi}_{w}(\Rn)$ yields
\begin{align}\label{indbchs1}
  &T_{0}(f\chi _{2B_1})(x)+\int_{\Rn\setminus 2B_1}K(x,y)f(y)dy \notag \\
  &= T_{0}(f\chi _{2B_1})(x) + \int_{B_3\setminus 2B_1}K(x,y)f(y)dy+ \int_{\Rn\setminus B_3}K(x,y)f(y)dy \notag \\
  &= T_{0}(f\chi _{2B_1})(x) + T_{0}(f\chi _{B_3\setminus 2B_1})(x)+ \int_{\Rn\setminus B_3}K(x,y)f(y)dy \notag \\
  &= T_{0}(f\chi _{B_3})(x) + \int_{\Rn\setminus B_3}K(x,y)f(y)dy.
\end{align}

Similarly, we also have
\begin{align}\label{indbchs2}
  T_{0}(f\chi _{2B_2})(x)+\int_{\Rn\setminus 2B_2}K(x,y)f(y)dy = T_{0}(f\chi _{B_3})(x) + \int_{\Rn\setminus B_3}K(x,y)f(y)dy.
\end{align}
Therefore, combining \eqref{indbchs1} and \eqref{indbchs2} we obtain \eqref{indchB}.

Now, we show the boundedness.

Since $f_1\in L^{\Phi}_{w}(\Rn)$, by the boundedness of $T_{0}$ in $L^{\Phi}_{w}(\Rn)$ provided by Theorem \ref{SIOweigOrlcold}, it follows that
\begin{equation}\label{es2}
\|T_{0}f_1\|_{L^{\Phi}_{w}(B)}\leq \|T_{0}f_1\|_{L^{\Phi}_{w}(\Rn)}\lesssim
\|f_1\|_{L^{\Phi}_{w}(\Rn)}=\|f\|_{L^{\Phi}_{w}(2B)}.
\end{equation}

We also have
\begin{equation}\label{es3}
\frac{1}{\Phi^{-1}\big(w(2B)^{-1}\big)}=\|\chi_{2B}\|_{WL^{\Phi}_{w}}
\lesssim
\|M\chi_{B}\|_{WL^{\Phi}_{w}}
\lesssim
\|\chi_{B}\|_{L^{\Phi}_{w}}=\frac{1}{\Phi^{-1}\big(w(B)^{-1}\big)}
\end{equation}
from the well-known pointwise estimate
$\chi_{2B}(z) \lesssim M\chi_{B}(z), \text{    for all  } z \in {\mathbb R}^n$
and Theorem \ref{MaxweigOrlcWMAx}.

By combining \eqref{es1}, \eqref{es2} and \eqref{es3}, we get the estimate
\begin{align}\label{es4ftg}
\varphi_2(B)^{-1} \,
\Phi^{-1}\big(w(B)^{-1}\big) \, \|T_{0}f_1\|_{L^{\Phi}_{w}(B)}
& \lesssim \varphi_1(2B)^{-1} \,
\Phi^{-1}\big(w(2B)^{-1}\big) \, \|f\|_{L^{\Phi}_{w}(2B)}
\notag
\\
& \lesssim\|f\|_{M^{\Phi,\varphi_1}_{w}}.
\end{align}

From \eqref{sal00xyz} for all $x\in B$, we have
\begin{align}\label{gfvjzdhhs}
  |Tf(x)| &\leq |T_{0}f_{1}(x)|+\int_{\Rn}|K(x,y)f_{2}(y)dy| \notag \\
  & \lesssim |T_{0}f_{1}(x)| + \|f\|_{M^{\Phi,\varphi_1}_{w}}\,\varphi_2(B).
\end{align}
Applying the norm $\|\cdot\|_{L^{\Phi}_{w}}$ on both sides of \eqref{gfvjzdhhs}, then by \eqref{charorlw} we get,
\begin{equation*}
\|Tf\|_{L^{\Phi}_{w}(B)}\lesssim \|T_{0}f_1\|_{L^{\Phi}_{w}(B)} + \frac{\|f\|_{M^{\Phi,\varphi_1}_{w}}\,\varphi_2(B)}{\Phi^{-1}\big(w(B)^{-1}\big)}.
\end{equation*}
Consequently, by \eqref{es4ftg} we have
\begin{align*}
&\varphi_2(B)^{-1} \,
\Phi^{-1}\big(w(B)^{-1}\big)\|Tf\|_{L^{\Phi}_{w}(B)} \notag \\
&~~~~\lesssim \varphi_2(B)^{-1} \,
\Phi^{-1}\big(w(B)^{-1}\big)\|T_{0}f_1\|_{L^{\Phi}_{w}(B)}+ \|f\|_{M^{\Phi,\varphi_1}_{w}}  \lesssim \|f\|_{M^{\Phi,\varphi_1}_{w}}.
\end{align*}
By taking supremum over $B\in\mathcal{B}$, we obtain the boundedness of $T$ from $M^{\Phi,\varphi_1}_{w}({\mathbb R}^n)$ to $M^{\Phi,\varphi_2}_{w}({\mathbb R}^n)$.
\end{proof}

%
%
%

If we take $\varphi_1=\varphi_2=\varphi$ at Theorem \ref{sufsio} we get the following corollary.
\begin{cor}\label{sufsiocor}
Let $\varphi$ be a positive measurable function
on $\Rn\times (0,\i)$ with satisfying doubling condition uniformly over the first variable, that is, there exist constant $C>0$
$$
\varphi(x,r)\geq C\varphi(x,2r),
$$
for all $x\in\Rn$ and $r>0$. Let also $\Phi$ be a Young function with $\Phi\in\Delta_2\cap\nabla_2$ and $w \in A_{i_{\Phi}}$. Then the condition
\begin{equation}\label{patcas}
\int_{r}^{\infty}\varphi(x,t)\frac{dt}{t}\le C \varphi(x,r),
\end{equation}
where $C$ does not depend on $x$ and $r$, is sufficient for the boundedness of Calder\'{o}n-Zygmund operator $T$ on $M^{\Phi,\varphi}_{w}({\mathbb R}^n)$.
\end{cor}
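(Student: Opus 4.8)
The plan is to deduce Corollary~\ref{sufsiocor} from Theorem~\ref{sufsio} by specializing to the diagonal case $\varphi_1=\varphi_2=\varphi$, so that the whole proof reduces to matching the hypotheses of the two statements.

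First I would observe that, with $\varphi_1=\varphi_2=\varphi$, condition~\eqref{es1} reads $\varphi(x,2r)\le C\varphi(x,r)$; rewriting the doubling hypothesis $\varphi(x,r)\ge C\varphi(x,2r)$ in the form $\varphi(x,2r)\le C^{-1}\varphi(x,r)$ shows that it is exactly~\eqref{es1} (with constant $C^{-1}$). Second, the integral condition~\eqref{patcas}, namely $\int_{r}^{\infty}\varphi(x,t)\,\frac{dt}{t}\le C\varphi(x,r)$, is literally~\eqref{wgtcond} once $\varphi_1=\varphi_2=\varphi$. The remaining assumptions --- $\Phi$ a Young function with $\Phi\in\Delta_2\cap\nabla_2$ and $w\in A_{i_{\Phi}}$ --- are identical in both statements. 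Hence Theorem~\ref{sufsio} applies and yields the boundedness of $T$ from $M^{\Phi,\varphi}_{w}(\Rn)$ to $M^{\Phi,\varphi}_{w}(\Rn)$, that is, the boundedness of $T$ on $M^{\Phi,\varphi}_{w}(\Rn)$, which is the claim.

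I do not expect any genuine obstacle: the argument is pure bookkeeping. The only point deserving a remark is that the doubling hypothesis must be assumed separately, since it does not follow from~\eqref{patcas} alone --- without some monotonicity of $t\mapsto\varphi(x,t)$ the tail integral $\int_{r}^{\infty}\varphi(x,t)\,\frac{dt}{t}$ need not control $\varphi(x,2r)$ from above --- and it is precisely this hypothesis that supplies~\eqref{es1}. Should a self-contained presentation be preferred, one can instead repeat the proof of Theorem~\ref{sufsio} verbatim with both $\varphi_1$ and $\varphi_2$ replaced by $\varphi$, invoking~\eqref{patcas} at the estimates culminating in~\eqref{sal00xyz} and the doubling condition at~\eqref{es4ftg}.
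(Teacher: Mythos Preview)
Your proposal is correct and matches the paper's approach exactly: the paper states the corollary as the immediate specialization $\varphi_1=\varphi_2=\varphi$ of Theorem~\ref{sufsio}, and your verification that the doubling hypothesis yields~\eqref{es1} while~\eqref{patcas} is~\eqref{wgtcond} is precisely the bookkeeping needed. Nothing further is required.
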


If we take $\Phi(t)=t^p,~1\le p<\infty$ at Theorem \ref{sufsio} we get the following corollary for generalized weighted Morrey spaces which was proved in \cite{KarGulSer}.
\begin{cor}
Let $1< p<\infty$, $w \in A_{p}$ and $\varphi_1,\varphi_2$ be positive measurable functions on $\Rn\times (0,\i)$ with satisfying the condition
\eqref{es1}. Then the condition \eqref{wgtcond} is sufficient for the boundedness of Calder\'{o}n-Zygmund operator $T$ from $M^{p,\varphi_1}_{w}({\mathbb R}^n)$ to $M^{p,\varphi_2}_{w}({\mathbb R}^n)$.
\end{cor}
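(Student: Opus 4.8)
The plan is to obtain the statement as the special case $\Phi(t)=t^{p}$ of Theorem~\ref{sufsio}, so that the whole argument reduces to checking that the hypotheses match. First I would recall that for $1<p<\infty$ the function $\Phi(t)=t^{p}$ is a Young function satisfying both the $\Delta_{2}$ and the $\nabla_{2}$ conditions, as noted right after the definitions of these conditions; it is precisely at $p=1$ that $\nabla_{2}$ fails, which explains why the corollary is stated for $1<p<\infty$ even though $M^{p,\varphi}_{w}$ is defined for $1\le p<\infty$.

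Next I would identify the relevant index. Since $h_{\Phi}(t)=\sup_{s>0}(st)^{p}/s^{p}=t^{p}$, one has $\log h_{\Phi}(t)/\log t=p$ for all $t>0$, hence $i_{\Phi}=I_{\Phi}=p$. Consequently the assumption $w\in A_{p}$ of the corollary is exactly the assumption $w\in A_{i_{\Phi}}$ required in Theorem~\ref{sufsio}. The hypotheses \eqref{es1} on the pair $(\varphi_{1},\varphi_{2})$ and \eqref{wgtcond} are carried over verbatim.

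Finally I would invoke the elementary computation $\Phi^{-1}(s)=s^{1/p}$, which gives $\Phi^{-1}\big(w(B)^{-1}\big)=w(B)^{-1/p}$ and $\|\cdot\|_{L^{\Phi}_{w}}=\|\cdot\|_{L^{p}_{w}}$, so that $M^{\Phi,\varphi}_{w}(\Rn)=M^{p,\varphi}_{w}(\Rn)$ with identical norms (this is the third item of the Example in Section~3). Applying Theorem~\ref{sufsio} with this choice of $\Phi$ then yields the boundedness of $T$ from $M^{p,\varphi_{1}}_{w}(\Rn)$ to $M^{p,\varphi_{2}}_{w}(\Rn)$.

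There is no genuine obstacle here: the only points requiring a moment's care are the evaluation $i_{\Phi}=p$ (so that $A_{i_{\Phi}}=A_{p}$) and the membership $t^{p}\in\Delta_{2}\cap\nabla_{2}$ for $1<p<\infty$. As the paper observes, the resulting inequality already appears in \cite{KarGulSer}; the corollary is included to exhibit Theorem~\ref{sufsio} as a genuine common generalization.
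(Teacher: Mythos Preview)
Your proposal is correct and matches the paper's own derivation: the corollary is stated immediately after Theorem~\ref{sufsio} as the specialization $\Phi(t)=t^{p}$, and the paper offers no further argument beyond that remark. Your verification that $t^{p}\in\Delta_{2}\cap\nabla_{2}$, that $i_{\Phi}=p$ (so $A_{i_{\Phi}}=A_{p}$), and that $M^{\Phi,\varphi}_{w}=M^{p,\varphi}_{w}$ is exactly what is needed to justify this specialization.
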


To obtain a necessary condition for the boundedness of the singular integral $T$ on $M^{\Phi,\varphi}_{w}({\mathbb R}^n)$ we shall consider the genuine singular integral operator, defined by Burenkov et al. \cite{BurGulSerTar}.

\begin{defn}A Calder\'{o}n-Zygmund operator $T$ is called a genuine singular integral
operator if there exist some constant $C > 0$ and a cone $V:=R\{x\in\Rn:|x'|<\theta|x_n|\}$ with $\theta>0$ and $R\in O(n)$ such that
\begin{align*}
K(x,y)\geq\frac{C}{|x-y|^{n}}
\end{align*}
for all $x,y\in\Rn$ with $x-y\in V.$ Here, $O(n)$ denotes the set of all orthogonal matrices
in $\Rn$.
\end{defn}

\begin{thm}\label{thm4.4.}
If $\Phi\in\nabla_2$, $w \in A_{i_{\Phi}}$, $\varphi_1,\varphi_2$ be positive measurable functions on $\Rn\times (0,\i)$ with satisfying the condition \eqref{condMnec} and $\varphi_1\in{\mathcal{G}}^{\Phi}_w$, then the condition \eqref{wgtcond} is necessary for the boundedness of genuine $T$ from $M^{\Phi,\varphi_1}_{w}({\mathbb R}^n)$ to $M^{\Phi,\varphi_2}_{w}({\mathbb R}^n)$.
\end{thm}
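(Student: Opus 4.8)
The plan is to prove necessity by testing the hypothesised boundedness of $T$ against test functions that are constant on dyadic annuli around $x_0$ and are supported in the cone along which the genuine kernel is positive, so that $T$ reproduces the ``tail integral'' of \eqref{wgtcond} with no sign cancellation. Fix $B_0=B(x_0,r)$, let $V$ be the cone from the definition of a genuine singular integral operator, and fix an open cone $\widetilde V$ with $\overline{\widetilde V}\setminus\{0\}\subset V$. There is a constant $c_0=c_0(V,\widetilde V)>1$ such that $x-y\in V$ and $|x-y|\approx|x_0-y|$ whenever $x\in B_0$ and $y\in(x_0-\widetilde V)\setminus B(x_0,c_0r)$. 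Put $E_k:=(x_0-\widetilde V)\cap\{\,2^{k}c_0r\le|x_0-y|<2^{k+1}c_0r\,\}$, so that $|E_k|\approx(2^{k}c_0r)^{n}$, and for $N\in\N$ set
\[
f_N:=\sum_{k=0}^{N}\varphi_1\big(x_0,2^{k}c_0r\big)\,\chi_{E_k}.
\]

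Since $\text{supp}\,f_N\cap B_0=\emptyset$, for $x\in B_0$ the value $Tf_N(x)=\int_{\Rn}K(x,y)f_N(y)\,dy$ is an ordinary absolutely convergent integral, and because $x-y\in V$ for every $y\in\text{supp}\,f_N$ the lower kernel bound $K(x,y)\ge C|x-y|^{-n}$ applies to the whole integrand; hence, uniformly in $x\in B_0$,
\[
Tf_N(x)\gtrsim\sum_{k=0}^{N}\varphi_1\big(x_0,2^{k}c_0r\big)\int_{E_k}\frac{dy}{|x_0-y|^{n}}\approx\sum_{k=0}^{N}\varphi_1\big(x_0,2^{k}c_0r\big)\approx\int_{c_0r}^{2^{N}c_0r}\varphi_1(x_0,t)\,\frac{dt}{t},
\]
the last comparison using the essential monotonicity of $\varphi_1$ built into $\varphi_1\in{\mathcal{G}}_{\Phi}^{w}$. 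As the pointwise lower bound is a positive constant on $B_0$, applying $\|\cdot\|_{L^{\Phi}_{w}(B_0)}$, multiplying by $\varphi_2(B_0)^{-1}\Phi^{-1}(w(B_0)^{-1})$ and using \eqref{charorlw} we obtain
\[
\|Tf_N\|_{M^{\Phi,\varphi_2}_{w}}\gtrsim\frac{1}{\varphi_2(x_0,r)}\int_{c_0r}^{2^{N}c_0r}\varphi_1(x_0,t)\,\frac{dt}{t}.
\]

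The decisive point is then to prove that $\|f_N\|_{M^{\Phi,\varphi_1}_{w}}\le C$ with $C$ independent of $N$, $x_0$, $r$. For an arbitrary ball $B$ one estimates $\|f_N\|_{L^{\Phi}_{w}(B)}=\big\|\sum_k\varphi_1(x_0,2^{k}c_0r)\chi_{E_k\cap B}\big\|_{L^{\Phi}_{w}}$ directly from the Luxemburg modular, using: that $w\in A_{i_{\Phi}}\subset A_{\infty}$ is doubling and reverse‑doubling, so $w(E_k)\approx w(B(x_0,2^{k}c_0r))$ and $\sum_{k\le J}w(B(x_0,2^{k}c_0r))\approx w(B(x_0,2^{J}c_0r))$; the two defining inequalities of ${\mathcal{G}}_{\Phi}^{w}$, which control $\varphi_1$ at \emph{every} ball, not only at concentric ones; the inequality $\Phi(\mu s)\le\mu\,\Phi(s)$ for $\mu\in[0,1]$ (from convexity and $\Phi(0)=0$); and the $\nabla_2$‑property of $\Phi$. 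Putting these together, the $L^{\Phi}_{w}$‑modular of $f_N$ divided by $\kappa\,\varphi_1(B)\,\Phi^{-1}(w(B)^{-1})^{-1}$ is at most $1$ for a suitable absolute constant $\kappa$, whence $\varphi_1(B)^{-1}\Phi^{-1}(w(B)^{-1})\|f_N\|_{L^{\Phi}_{w}(B)}\le\kappa$, and the bound follows by taking the supremum over $B$.

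Combining the estimate for $\|Tf_N\|_{M^{\Phi,\varphi_2}_{w}}$ with $\|f_N\|_{M^{\Phi,\varphi_1}_{w}}\le C$ and the boundedness of $T$ gives $\varphi_2(x_0,r)^{-1}\int_{c_0r}^{2^{N}c_0r}\varphi_1(x_0,t)\,\frac{dt}{t}\lesssim\|T\|$; letting $N\to\infty$ yields $\int_{c_0r}^{\infty}\varphi_1(x_0,t)\frac{dt}{t}\lesssim\|T\|\,\varphi_2(x_0,r)$. The short range is elementary: by the essential monotonicity of $\varphi_1$ and \eqref{condMnec}, $\int_{r}^{c_0r}\varphi_1(x_0,t)\frac{dt}{t}\lesssim\varphi_1(x_0,r)\lesssim\varphi_2(x_0,r)$, and adding the two ranges gives \eqref{wgtcond}. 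I expect the main obstacle to be the uniform estimate $\|f_N\|_{M^{\Phi,\varphi_1}_{w}}\le C$: the Luxemburg norm is not additive over the disjoint annular pieces $E_k$, so one cannot merely add their contributions and must genuinely exploit $\Phi\in\nabla_2$ together with the scaling encoded in ${\mathcal{G}}_{\Phi}^{w}$; a secondary, more routine but fiddly issue is the cone bookkeeping that makes $x-y\in V$ simultaneously for all $x\in B_0$ and $y\in\text{supp}\,f_N$, which is precisely what removes every cancellation from $Tf_N$ on $B_0$.
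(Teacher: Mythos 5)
Your reduction of \eqref{wgtcond} to the uniform bound $\|f_N\|_{M^{\Phi,\varphi_1}_{w}}\le C$ is where the argument breaks down: that bound is not merely the hard step you flag, it is false under the stated hypotheses. Take $\Phi(t)=t^{p}$ with $1<p<\infty$, $w\equiv1$ and $\varphi_1(x,t)=\varphi_2(x,t)=t^{-n/p}$, so that $M^{\Phi,\varphi_1}_{w}(\Rn)=L^{p}(\Rn)$; every assumption of the theorem holds ($\Phi\in\nabla_2$, $w\in A_{i_{\Phi}}$, $\varphi_1\in{\mathcal{G}}^{\Phi}_w$ since $\varphi_1(B)/\Phi^{-1}(w(B)^{-1})$ is constant, and \eqref{condMnec} is trivial). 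Then $f_N=\sum_{k=0}^{N}(2^{k}c_0r)^{-n/p}\chi_{E_k}$ with $|E_k|\approx(2^{k}c_0r)^{n}$ gives
$$
\int_{\Rn}f_N(y)^{p}\,dy=\sum_{k=0}^{N}(2^{k}c_0r)^{-n}|E_k|\approx N+1,
$$
so $\|f_N\|_{M^{\Phi,\varphi_1}_{w}}\approx\|f_N\|_{L^{p}}\approx N^{1/p}\to\infty$. The structural reason is that ${\mathcal{G}}^{\Phi}_w$ only makes $t\mapsto\varphi_1(x_0,t)/\Phi^{-1}\big(w(B(x_0,t))^{-1}\big)$ almost \emph{increasing}; it permits this ratio to be constant, in which case all $N+1$ annular layers of $f_N$ contribute comparably to the Luxemburg modular over a large ball, and no choice of $\kappa$ and no use of $\nabla_2$ can force the modular below $1$ uniformly in $N$. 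Consequently your final inequality $\int_{c_0r}^{2^{N}c_0r}\varphi_1(x_0,t)\frac{dt}{t}\lesssim\|T\|\,\varphi_2(x_0,r)\,\|f_N\|_{M^{\Phi,\varphi_1}_{w}}$ degenerates as $N\to\infty$ and does not yield \eqref{wgtcond}. (The first half of the argument --- the cone bookkeeping and the pointwise bound $Tf_N\gtrsim\int_{c_0r}^{2^{N}c_0r}\varphi_1(x_0,t)\frac{dt}{t}$ on $B_0$ --- is sound.)

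The paper sidesteps exactly this difficulty by arguing by contradiction with a single-layer test function. Supposing that for every $m\ge3$ some ball $B_m$ satisfies $\varphi_1(2^{m}B_m)^{-1}\le2\varphi_2(B_m)^{-1}$, it tests $T$ on $f_m=\chi_{_V}(-\cdot)\chi_{2^{m-1}B_m\setminus2B_m}$, whose norm is controlled by $\|\chi_{2^{m}B_m}\|_{M^{\Phi,\varphi_1}_{w}}\approx\varphi_1(2^{m}B_m)^{-1}$ via Lemma \ref{charwOrlMor} --- no summation over scales is needed --- while $Tf_m\gtrsim\log m$ on $V\cap B_m$; combined with the maximal-function estimate $\varphi_2(B_m)^{-1}\lesssim\|\chi_{V\cap B_m}\|_{M^{\Phi,\varphi_1}_{w}}$ from Theorem \ref{thm4.4.max}, this forces $\log m\lesssim1$, a contradiction. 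The resulting decay $\varphi_1(2^{m_0}B)<\frac12\varphi_2(B)$ across scales is then summed geometrically to obtain \eqref{wgtcond}. In other words, one must first extract a quantitative decay of $\varphi_1$ relative to $\varphi_2$ over dyadic scales before integrating; your construction tries to read off the integral directly and therefore needs a norm bound that the hypotheses do not supply.
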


\begin{proof}
In the proof, we follow the ideas of \cite{HakNakSaw}. Assume that for any $m\in\mathbb{N} \cap [3,\i)$ there exists $B_m\in \mathcal{B}$ such that $\varphi_1(2^{m}B_m)^{-1}\leq2\varphi_2(B_m)^{-1}.$ Then, consider $f_m(x)=\chi_{_V}(-x)\chi_{2^{m-1}B_m\backslash 2B_m}(x)$. Let $x\in V\cap B_m$. If $y\in \text{supp}(f_m),$ then $x-y\in V$ and $r_{B_m}\leq |x-y|\leq2^{m}r_{B_m}$. Thus
\begin{align*}
Tf_m(x)&=\int_{\Rn}K(x,y)f_m(y)dy\\
&=\int_{\text{supp}(f_m)}K(x,y)dy\geq C\int_{\text{supp}(f_m)}\frac{dy}{|x-y|^n}\\
&\geq C\int_{\text{supp}(f_m)}\frac{dy}{|y|^n}=C\log m.
\end{align*}
Since V is a cone, we have $\chi_{B_m}\lesssim M\chi_{V\cap B_m}$. We use this estimate and the
boundedness of $M$ from $M^{\Phi,\varphi_1}_{w}({\mathbb R}^n)$ to $M^{\Phi,\varphi_2}_{w}({\mathbb R}^n)$, see Theorem \ref{thm4.4.max}, to obtain
\begin{align*}
\varphi_2(B_m)^{-1}\lesssim\|\chi_{B_m}\|_{M^{\Phi,\varphi_2}_{w}}\lesssim\|M\chi_{V\cap B_m}\|_{M^{\Phi,\varphi_2}_{w}}\lesssim\|\chi_{V\cap B_m}\|_{M^{\Phi,\varphi_1}_{w}}.
\end{align*}
By using the inequality $\log m\lesssim Tf_m(x)$ for $x\in V\cap B_m$ and the boundedness
of $T$ from $M^{\Phi,\varphi_1}_{w}({\mathbb R}^n)$ to $M^{\Phi,\varphi_2}_{w}({\mathbb R}^n)$, we have
\begin{align*}
\varphi_2(B_m)^{-1}\log m&\lesssim\|Tf_m\|_{M^{\Phi,\varphi_2}_{w}}\\
&\lesssim\|f_m\|_{M^{\Phi,\varphi_1}_{w}}\\
&\leq\|\chi_{2^{m}B_m}\|_{M^{\Phi,\varphi_1}_{w}}\lesssim\varphi_1(2^{m}B_m)^{-1}\lesssim\varphi_2(B_m)^{-1}.
\end{align*}
This implies $\log m\leq C$ where $C$ is independent of $m$, contradictory to the fact that
$m\geq3$ is arbitrary. Hence, there exists some $m_0\in\mathbb{N}$ such that $\varphi_1(2^{m_0} B ) < \frac{1}{2}\varphi_2(B)$ for all $B\in \mathcal{B}$.

Therefore,
\begin{align*}
\int_r^\i \varphi_1(x,t)\frac{dt}{t}&=\sum_{k=1}^\i\int_{2^{(k-1)m_0}r}^{2^{km_0}r}\varphi_1(B)\frac{dt}{t}
\\
&\leq\sum_{k=1}^\i\varphi_1(2^{(k-1)m_0}B)\int_{2^{(k-1)m_0}r}^{2^{km_0}r}\frac{dt}{t}
\\
&=m_0\log2\sum_{k=1}^\i\varphi_1(2^{(k-1)m_0}B)
\\
&\lesssim \varphi_2(B) \sum_{k=1}^\i\frac{1}{2^{k-1}}\lesssim \varphi_2(B).
\end{align*}
\end{proof}

If we take $\varphi_1=\varphi_2=\varphi$ at Theorem \ref{thm4.4.} we get the following corollary.
\begin{cor}
If $\Phi\in\nabla_2$, $\varphi \in{\mathcal{G}}^{\Phi}_w$ and $w \in A_{i_{\Phi}}$, then the condition \eqref{patcas} is necessary for the boundedness of genuine $T$ on $M^{\Phi,\varphi}_{w}({\mathbb R}^n)$.
\end{cor}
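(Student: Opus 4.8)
The plan is to obtain this corollary as the diagonal case $\varphi_1=\varphi_2=\varphi$ of Theorem \ref{thm4.4.}, so essentially no new work is needed beyond checking that the hypotheses specialize correctly. First I would verify that, under the assumptions $\Phi\in\nabla_2$, $w\in A_{i_\Phi}$ and $\varphi\in\mathcal{G}^{\Phi}_w$, all the structural requirements of Theorem \ref{thm4.4.} hold with $\varphi_1=\varphi_2=\varphi$: the conditions $\Phi\in\nabla_2$ and $w\in A_{i_\Phi}$ are assumed verbatim; the requirement $\varphi_1\in\mathcal{G}^{\Phi}_w$ becomes exactly $\varphi\in\mathcal{G}^{\Phi}_w$; and the auxiliary condition \eqref{condMnec}, namely $\varphi_1(x,r)\le C\,\varphi_2(x,r)$, is trivially satisfied with $C=1$ because the two functions coincide.

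Once the hypotheses are in place, Theorem \ref{thm4.4.} asserts that \eqref{wgtcond} is necessary for the boundedness of the genuine Calder\'on--Zygmund operator $T$ from $M^{\Phi,\varphi_1}_{w}(\Rn)$ to $M^{\Phi,\varphi_2}_{w}(\Rn)$. Specializing \eqref{wgtcond} to $\varphi_1=\varphi_2=\varphi$ turns it into $\int_{r}^{\infty}\varphi(x,t)\,\frac{dt}{t}\le C\,\varphi(x,r)$, which is precisely \eqref{patcas}, while the source and target spaces both collapse to $M^{\Phi,\varphi}_{w}(\Rn)$; thus boundedness ``from $M^{\Phi,\varphi_1}_{w}$ to $M^{\Phi,\varphi_2}_{w}$'' is exactly boundedness of $T$ on $M^{\Phi,\varphi}_{w}(\Rn)$, and the proof is complete.

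Since the statement is a pure specialization there is no genuine obstacle at this level; all the substance already lies inside the proof of Theorem \ref{thm4.4.}. For completeness I would recall the mechanism there: one tests $T$ against $f_m=\chi_{V}(-\,\cdot)\,\chi_{2^{m-1}B_m\setminus 2B_m}$, uses the genuine lower bound $K(x,y)\gtrsim|x-y|^{-n}$ on the cone $V$ to get $Tf_m(x)\gtrsim\log m$ on $V\cap B_m$, and combines this with the characterization $\|\chi_{B_0}\|_{M^{\Phi,\varphi}_{w}}\approx\varphi(B_0)^{-1}$ (Lemma \ref{charwOrlMor}), the estimate $\chi_{B_m}\lesssim M\chi_{V\cap B_m}$, and the boundedness of $M$ (Theorem \ref{thm4.4.max}) to force, for some $m_0$, the geometric decay $\varphi_1(2^{m_0}B)<\tfrac12\varphi_2(B)$ for all $B$; a dyadic summation then yields the integral bound. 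The only point requiring care is that bookkeeping step, but with $\varphi_1=\varphi_2=\varphi$ it is already carried out verbatim at the end of the proof of Theorem \ref{thm4.4.}.
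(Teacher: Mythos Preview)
Your proposal is correct and matches the paper's approach exactly: the corollary is obtained simply by taking $\varphi_1=\varphi_2=\varphi$ in Theorem~\ref{thm4.4.}, and you have correctly verified that all hypotheses (in particular \eqref{condMnec} with $C=1$) specialize as required. The paper itself gives no separate proof beyond this one-line specialization, so your additional recap of the mechanism inside Theorem~\ref{thm4.4.} is more detail than the paper provides but is accurate.
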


If we take $\Phi(t)=t^p,~1\le p<\infty$ at Theorem \ref{thm4.4.} we get the following corollary for generalized weighted Morrey spaces.
\begin{cor}
If $1<p<\infty$, $w \in A_{p}$, $\varphi_1,\varphi_2$ be positive measurable functions on $\Rn\times (0,\i)$ with satisfying the condition \eqref{condMnec} and $\varphi_1\in{\mathcal{G}}^{p}_w$, then the condition \eqref{wgtcond} is necessary for the boundedness of genuine Calder\'{o}n-Zygmund operator $T$ from $M^{p,\varphi_1}_{w}({\mathbb R}^n)$ to $M^{p,\varphi_2}_{w}({\mathbb R}^n)$.
\end{cor}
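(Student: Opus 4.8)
The plan is simply to read off this corollary as the special case $\Phi(t)=t^p$ of Theorem~\ref{thm4.4.}, so the work reduces to matching hypotheses and identifying the spaces. First I would record the elementary facts about the Young function $\Phi(t)=t^p$ with $1<p<\infty$: one has $h_\Phi(t)=\sup_{s>0}(st)^p/s^p=t^p$, hence the dilation indices satisfy $i_\Phi=I_\Phi=p$; moreover $r^p$ obeys both the $\Delta_2$- and the $\nabla_2$-condition (this is already noted in Section~2). Consequently the standing assumptions of Theorem~\ref{thm4.4.}, namely $\Phi\in\nabla_2$ and $w\in A_{i_\Phi}$, become exactly $w\in A_p$, which is the hypothesis of the corollary.

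Next I would check that the target objects coincide. Since $\Phi^{-1}(s)=s^{1/p}$, the weighted Orlicz (local) norm collapses to $\|f\|_{L^{t^p}_w(B)}=\big(\int_B|f(y)|^p\,w(y)\,dy\big)^{1/p}=\|f\|_{L^p_w(B)}$ and $\Phi^{-1}\big(w(B)^{-1}\big)=w(B)^{-1/p}$; plugging these into Definition~\ref{weigenOrlMor} recovers precisely the generalized weighted Morrey norm $\|f\|_{M^{p,\varphi}_w}$ recalled in the Introduction, i.e.\ $M^{t^p,\varphi}_w=M^{p,\varphi}_w$ (third bullet of the Example in Section~3). The same substitution turns the second defining inequality of ${\mathcal{G}}^{\Phi}_w$ into $\inf_{B:\,r_B\ge r_{B_0}}\varphi(B)w(B)^{1/p}\gtrsim\varphi(B_0)w(B_0)^{1/p}$, which is the definition of ${\mathcal{G}}^{p}_w$; thus $\varphi_1\in{\mathcal{G}}^{p}_w$ is the right hypothesis. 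Finally, conditions \eqref{condMnec} and \eqref{wgtcond} make no reference to $\Phi$ and so carry over unchanged.

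With all identifications in hand, Theorem~\ref{thm4.4.} applied with $\Phi(t)=t^p$ gives the claim verbatim. I do not anticipate any genuine obstacle: the only points that need a line of verification are the index computation $i_{t^p}=p$ and the two identifications $M^{t^p,\varphi}_w=M^{p,\varphi}_w$ and ${\mathcal{G}}^{t^p}_w={\mathcal{G}}^{p}_w$, all of which are immediate from the definitions.
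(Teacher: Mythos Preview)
Your proposal is correct and is exactly the paper's own approach: the corollary is stated immediately after Theorem~\ref{thm4.4.} with the one-line justification ``If we take $\Phi(t)=t^p$, $1\le p<\infty$ at Theorem~\ref{thm4.4.}\dots'', and your verification that $i_{t^p}=p$, $M^{t^p,\varphi}_w=M^{p,\varphi}_w$, and ${\mathcal{G}}^{t^p}_w={\mathcal{G}}^{p}_w$ simply spells out what that specialization entails.
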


In particular, by combining Theorems \ref{sufsio} and \ref{thm4.4.} we have the following result.
\begin{cor}
Let $\Phi$ be a Young function with $\Phi\in\Delta_2\cap\nabla_2$ and $\varphi_1,\varphi_2$ be positive measurable functions on $\Rn\times (0,\i)$ with satisfying the condition \eqref{condMnec}. Let also $\varphi_1\in{\mathcal{G}}^{\Phi}_w$ and $w \in A_{i_{\Phi}}$. Then the condition \eqref{wgtcond} is necessary and sufficient for the boundedness of genuine Calder\'{o}n-Zygmund operator $T$ from $M^{\Phi,\varphi_1}_{w}({\mathbb R}^n)$ to $M^{\Phi,\varphi_2}_{w}({\mathbb R}^n)$.
\end{cor}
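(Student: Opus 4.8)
The plan is to obtain this corollary by assembling the sufficiency half of Theorem~\ref{sufsio} and the necessity half of Theorem~\ref{thm4.4.}; the one point that needs a short argument is that, under the standing hypothesis $\varphi_1\in{\mathcal{G}}^{\Phi}_w$, the integral condition \eqref{wgtcond} already implies the doubling-type bound \eqref{es1} that Theorem~\ref{sufsio} requires as a separate assumption.

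For the sufficiency, fix $x\in\Rn$ and $r>0$ and put $B_0=B(x,2r)$. Applying the first defining inequality of ${\mathcal{G}}^{\Phi}_w$ to $B_0$ and to the balls $B(x,t)$ with $r\le t\le 2r$ (which all have radius at most $r_{B_0}$) gives $\varphi_1(x,2r)\lesssim\varphi_1(x,t)$ for every such $t$. Integrating in $t$ against $dt/t$ over $[r,2r]$ and then invoking \eqref{wgtcond} yields
\[
\varphi_1(x,2r)\,\log 2\;\lesssim\;\int_r^{2r}\varphi_1(x,t)\,\frac{dt}{t}\;\le\;\int_r^{\infty}\varphi_1(x,t)\,\frac{dt}{t}\;\le\;C\,\varphi_2(x,r),
\]
so \eqref{es1} holds. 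Since moreover $\Phi\in\Delta_2\cap\nabla_2$ and $w\in A_{i_\Phi}$, Theorem~\ref{sufsio} applies and gives the boundedness of the Calder\'{o}n-Zygmund operator $T$ from $M^{\Phi,\varphi_1}_{w}(\Rn)$ to $M^{\Phi,\varphi_2}_{w}(\Rn)$.

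For the necessity there is nothing further to do: $\Phi\in\Delta_2\cap\nabla_2$ forces $\Phi\in\nabla_2$, and together with $w\in A_{i_\Phi}$, the relation \eqref{condMnec} between $\varphi_1$ and $\varphi_2$, and $\varphi_1\in{\mathcal{G}}^{\Phi}_w$, these are precisely the hypotheses of Theorem~\ref{thm4.4.}; hence boundedness of the genuine operator $T$ from $M^{\Phi,\varphi_1}_{w}(\Rn)$ to $M^{\Phi,\varphi_2}_{w}(\Rn)$ forces \eqref{wgtcond}, and the two implications together give the claimed equivalence.

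I do not foresee a genuine obstacle. The only place requiring a little care is the legitimacy of passing, in the first ${\mathcal{G}}^{\Phi}_w$ inequality, from the infimum over all balls of radius $\le r_{B_0}$ to the pointwise comparison at the fixed center $x$; this is immediate since the balls $B(x,t)$ with $t\le 2r$ are among those over which the infimum is taken. Thus the whole argument reduces to the one-line estimate displayed above followed by the two citations.
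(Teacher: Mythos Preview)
Your proof is correct and follows essentially the paper's route: combine Theorem~\ref{sufsio} for sufficiency with Theorem~\ref{thm4.4.} for necessity, after checking that \eqref{es1} is available. The only minor difference is in how you obtain \eqref{es1}: the paper (see the Remark immediately following Corollary~4.17) observes that $\varphi_1\in\mathcal{G}^\Phi_w$ together with \eqref{condMnec} already gives $\varphi_1(x,2r)\lesssim\varphi_1(x,r)\le C\varphi_2(x,r)$ directly, without passing through the integral condition \eqref{wgtcond}; your detour via the integral is valid but one step longer than needed.
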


\begin{cor}
Let $1<p<\infty$ and $\varphi_1,\varphi_2$ be positive measurable functions on $\Rn\times (0,\i)$ with satisfying the condition \eqref{condMnec}. Let also $\varphi_1\in{\mathcal{G}}^{p}_w$ and  $w \in A_{p}$. Then the condition \eqref{wgtcond} is necessary and sufficient for the boundedness of genuine Calder\'{o}n-Zygmund operator $T$ from $M^{p,\varphi_1}_{w}({\mathbb R}^n)$ to $M^{p,\varphi_2}_{w}({\mathbb R}^n)$.
\end{cor}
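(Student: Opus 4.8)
The plan is to obtain this statement as the special case $\Phi(t)=t^{p}$ of the preceding corollary (the one combining Theorems~\ref{sufsio} and~\ref{thm4.4.}). So essentially no new argument is needed: the task is only to check that, for $1<p<\infty$, the function $\Phi(t)=t^{p}$ satisfies the hypotheses of that corollary and that every object involved collapses to its Lebesgue-space analogue.

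First I would record the structural facts about $\Phi(t)=t^{p}$. It is a Young function, and for $1<p<\infty$ it lies in $\Delta_{2}\cap\nabla_{2}$ (as already noted in Section~2, using Remark~\ref{remlowup} with $p_{0}=p_{1}=p$). Moreover $h_{\Phi}(t)=\sup_{s>0}(st)^{p}/s^{p}=t^{p}$, so the lower and upper dilation indices coincide, $i_{\Phi}=I_{\Phi}=p$. Consequently the requirement $w\in A_{i_{\Phi}}$ in the general corollary is exactly $w\in A_{p}$, and $\Phi\in\Delta_{2}\cap\nabla_{2}$ is automatic, so Theorem~\ref{SIOweigOrlcold} and Theorem~\ref{thm4.4.max} are available.

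Next come the identifications of the spaces. Since $\Phi^{-1}(s)=s^{1/p}$, we have $\Phi^{-1}\big(w(B)^{-1}\big)=w(B)^{-1/p}$, so the norm of Definition~\ref{weigenOrlMor} reduces to the $M^{p,\varphi}_{w}$-norm from the Introduction; that is, $M^{\Phi,\varphi}_{w}({\mathbb R}^{n})=M^{p,\varphi}_{w}({\mathbb R}^{n})$ (one of the cases listed in the Example). For the same reason the two defining inequalities of ${\mathcal G}^{\Phi}_{w}$ become precisely those of ${\mathcal G}^{p}_{w}$, so $\varphi_{1}\in{\mathcal G}^{\Phi}_{w}$ is the same as $\varphi_{1}\in{\mathcal G}^{p}_{w}$. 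Finally, conditions \eqref{condMnec} and \eqref{wgtcond} contain no reference to $\Phi$, hence are unchanged. Feeding these reductions into the preceding corollary gives exactly the assertion. There is no real obstacle in this argument; the only points meriting an explicit line are that $t^{p}\in\Delta_{2}\cap\nabla_{2}$ for $1<p<\infty$ and that $i_{\Phi}=p$. (One could equally well bypass the general corollary and simply combine the two Lebesgue-space corollaries already deduced from Theorem~\ref{sufsio} and Theorem~\ref{thm4.4.}, which amounts to the same thing.)
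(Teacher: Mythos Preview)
Your proposal is correct and matches the paper's approach exactly: the paper states this corollary with no proof, treating it as the immediate specialization $\Phi(t)=t^{p}$ of the preceding Orlicz-level corollary (combining Theorems~\ref{sufsio} and~\ref{thm4.4.}). The verifications you spell out (that $t^{p}\in\Delta_{2}\cap\nabla_{2}$ for $1<p<\infty$, that $i_{\Phi}=p$, and that $M^{\Phi,\varphi}_{w}$ and $\mathcal{G}^{\Phi}_{w}$ reduce to $M^{p,\varphi}_{w}$ and $\mathcal{G}^{p}_{w}$) are precisely what the paper leaves implicit.
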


\begin{rem}
It is obvious that if $\varphi_1\in{\mathcal{G}}^{\Phi}_w$ or $\varphi_1\in{\mathcal{G}}^{p}_w$, then condition \eqref{condMnec} implies the condition \eqref{es1}.
\end{rem}

The boundedness result for the singular integral operator on weak generalized weighted Orlicz-Morrey spaces is given in the following theorem.
\begin{thm}\label{sufsiow}
Let $\varphi_1,\varphi_2$ be positive measurable functions
on $\Rn\times (0,\i)$ with satisfying \eqref{es1}, $\Phi$ be a Young function with $\Phi\in\Delta_2$ and $w \in A_{i_{\Phi}}$. Then the condition \eqref{wgtcond} is sufficient
for the boundedness of Calder\'{o}n-Zygmund operator $T$ from $M^{\Phi,\varphi_1}_{w}({\mathbb R}^n)$ to $WM^{\Phi,\varphi_2}_{w}({\mathbb R}^n)$.
\end{thm}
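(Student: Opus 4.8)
The plan is to follow the proof of Theorem \ref{sufsio} line by line, replacing the strong modular inequality of Theorem \ref{SIOweigOrlcold} (which needs $\Phi\in\Delta_2\cap\nabla_2$) by the weak modular inequality of Theorem \ref{MaxweigOrlcW} (which needs only $\Phi\in\Delta_2$), and measuring the image $Tf$ in $WL^{\Phi}_{w}$ rather than in $L^{\Phi}_{w}$. The point is that every other ingredient used in the proof of Theorem \ref{sufsio} --- Lemma \ref{lemHold}, the maximal modular inequality Theorem \ref{MaxweigOrlcWMAx}, the estimates \eqref{sal00}--\eqref{sal00xyz} and \eqref{es3} --- requires only $w\in A_{i_{\Phi}}$, not $\nabla_2$.

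Concretely, I would fix $f\in M^{\Phi,\varphi_1}_{w}(\Rn)$ and, for a ball $B=B(x_0,r)$ with $x\in B$, define $Tf(x):=T_{0}f_{1}(x)+\int_{\Rn}K(x,y)f_{2}(y)\,dy$ with $f_1=f\chi_{2B}$, $f_2=f\chi_{\Rn\setminus 2B}$, exactly as in \eqref{defsinmor}. Since $f_1\in L^{\Phi}_{w}(\Rn)$, Theorem \ref{MaxweigOrlcW} together with Remark \ref{remwmi} shows $T_{0}f_1\in WL^{\Phi}_{w}(\Rn)$ with $\|T_{0}f_1\|_{WL^{\Phi}_{w}}\lesssim\|f_1\|_{L^{\Phi}_{w}}$; in particular $T_{0}f_1$ is a well-defined measurable function. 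The absolute convergence of $\int_{\Rn}K(x,y)f_2(y)\,dy$ for a.e.\ $x\in B$ and the independence of the representation from the choice of $B$ are obtained exactly as in the proof of Theorem \ref{sufsio} (both rely only on Lemma \ref{lemHold}, on \eqref{wgtcond}, and on $T$ being of weak type $(1,1)$, hence linear on the $L^1$ truncations of $f$); neither argument uses $\nabla_2$. In particular \eqref{sal00xyz} stays valid: $\int_{\Rn}|K(x,y)f_{2}(y)|\,dy\lesssim\|f\|_{M^{\Phi,\varphi_1}_{w}}\varphi_2(x_0,r)$ for all $x\in B$.

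For the boundedness I would estimate $\|Tf\|_{WL^{\Phi}_{w}(B)}$. Using the quasi-triangle inequality for $WL^{\Phi}_{w}$ and the pointwise bound above,
\[
\|Tf\|_{WL^{\Phi}_{w}(B)}\lesssim\|T_{0}f_1\|_{WL^{\Phi}_{w}(B)}+\|f\|_{M^{\Phi,\varphi_1}_{w}}\,\varphi_2(B)\,\|\chi_{B}\|_{WL^{\Phi}_{w}}.
\]
For the first term, $\|T_{0}f_1\|_{WL^{\Phi}_{w}(B)}\le\|T_{0}f_1\|_{WL^{\Phi}_{w}(\Rn)}\lesssim\|f_1\|_{L^{\Phi}_{w}(\Rn)}=\|f\|_{L^{\Phi}_{w}(2B)}$; combining this with \eqref{es1} and \eqref{es3} as in the derivation of \eqref{es4ftg} (with the weak bound from Theorem \ref{MaxweigOrlcW} in place of the strong bound \eqref{es2}) gives $\varphi_2(B)^{-1}\Phi^{-1}\big(w(B)^{-1}\big)\|T_{0}f_1\|_{WL^{\Phi}_{w}(B)}\lesssim\|f\|_{M^{\Phi,\varphi_1}_{w}}$. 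For the second term, $\|\chi_{B}\|_{WL^{\Phi}_{w}}=1/\Phi^{-1}\big(w(B)^{-1}\big)$ by \eqref{charorlw}, so multiplying by $\varphi_2(B)^{-1}\Phi^{-1}\big(w(B)^{-1}\big)$ leaves just $\|f\|_{M^{\Phi,\varphi_1}_{w}}$. Adding the two contributions and taking the supremum over $B\in\mathcal{B}$ yields $\|Tf\|_{WM^{\Phi,\varphi_2}_{w}}\lesssim\|f\|_{M^{\Phi,\varphi_1}_{w}}$.

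The only step that is not a verbatim transcription of the earlier proof is the additivity inequality $\|F+G\|_{WL^{\Phi}_{w}}\lesssim\|F\|_{WL^{\Phi}_{w}}+\|G\|_{WL^{\Phi}_{w}}$, needed because $WL^{\Phi}_{w}$ is only a quasi-normed space; this is routine, following from $\{|F+G|>2t\}\subset\{|F|>t\}\cup\{|G|>t\}$ and the convexity estimate $\Phi(2t)\ge 2\Phi(t)$, and it holds for any Young function with an absolute constant. This is the main (and minor) obstacle; everything else reduces to the fact that the weak modular inequality of Theorem \ref{MaxweigOrlcW} is available under the weaker hypothesis $\Phi\in\Delta_2$.
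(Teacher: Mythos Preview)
Your proposal is correct and follows essentially the same approach as the paper's own proof: both replace the strong $L^{\Phi}_{w}$ bound of Theorem \ref{SIOweigOrlcold} by the weak bound from Theorem \ref{MaxweigOrlcW}, reuse the pointwise estimate \eqref{gfvjzdhhs} (equivalently \eqref{sal00xyz}) together with \eqref{es1} and \eqref{es3}, and then apply the $WL^{\Phi}_{w}$ quasi-norm over $B$ before taking the supremum. The only cosmetic difference is that you make the quasi-triangle inequality for $WL^{\Phi}_{w}$ explicit, whereas the paper applies $\|\cdot\|_{WL^{\Phi}_{w}}$ directly to \eqref{gfvjzdhhs} without comment.
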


\begin{proof}
Let $T$ be defined as in \eqref{defsinmor}. Since $f_1\in L^{\Phi}_{w}(\Rn)$, by the boundedness of $T$ from $L^{\Phi}_{w}(\Rn)$ to $WL^{\Phi}_{w}(\Rn)$ provided by Theorem \ref{MaxweigOrlcW}, it follows that
\begin{equation}\label{es2w}
\|T_{0}f_1\|_{WL^{\Phi}_{w}(B)}\leq \|T_{0}f_1\|_{WL^{\Phi}_{w}(\Rn)}\lesssim
\|f_1\|_{L^{\Phi}_{w}(\Rn)}=\|f\|_{L^{\Phi}_{w}(2B)}.
\end{equation}
By combining \eqref{es1}, \eqref{es2w} and \eqref{es3}, we get the estimate
\begin{align}\label{es4ftgw}
& \varphi_2(B)^{-1} \,
\Phi^{-1}\big(w(B)^{-1}\big) \, \|T_{0}f_1\|_{WL^{\Phi}_{w}(B)}  \notag
\\
&\lesssim \varphi_1(2B)^{-1} \,
\Phi^{-1}\big(w(2B)^{-1}\big) \, \|f\|_{L^{\Phi}_{w}(2B)}
\lesssim\|f\|_{M^{\Phi,\varphi_1}_{w}}.
\end{align}

Applying the norm $\|\cdot\|_{WL^{\Phi}_{w}}$ on both sides of \eqref{gfvjzdhhs}, then by \eqref{charorlw} we get,
\begin{equation*}
\|Tf\|_{WL^{\Phi}_{w}(B)}\lesssim \|T_{0}f_1\|_{WL^{\Phi}_{w}(B)} + \frac{\|f\|_{M^{\Phi,\varphi_1}_{w}}\,\varphi_2(B)}{\Phi^{-1}\big(w(B)^{-1}\big)}.
\end{equation*}
Consequently, from \eqref{es4ftgw} we have
\begin{align*}
&\varphi_2(B)^{-1} \,
\Phi^{-1}\big(w(B)^{-1}\big)\|Tf\|_{WL^{\Phi}_{w}(B)} \notag \\
&~~~~\lesssim \varphi_2(B)^{-1} \,
\Phi^{-1}\big(w(B)^{-1}\big)\|T_{0}f_1\|_{WL^{\Phi}_{w}(B)}+ \|f\|_{M^{\Phi,\varphi_1}_{w}}  \lesssim \|f\|_{M^{\Phi,\varphi_1}_{w}}.
\end{align*}
By taking supremum over $B\in\mathcal{B}$, we obtain the boundedness of $T$ from $M^{\Phi,\varphi_1}_{w}({\mathbb R}^n)$ to $WM^{\Phi,\varphi_2}_{w}({\mathbb R}^n)$.
\end{proof}

If we take $\Phi(t)=t^p,~1\le p<\infty$ at Theorem \ref{sufsiow} we get the following corollary for weak generalized weighted Morrey spaces  which was proved in \cite{KarGulSer}.
\begin{cor}
Let $1\le p<\infty$, $w \in A_{p}$ and $\varphi_1,\varphi_2$ be positive measurable functions on $\Rn\times (0,\i)$ with satisfying the condition
\eqref{es1}. Then the condition \eqref{wgtcond} is sufficient for the boundedness of Calder\'{o}n-Zygmund operator $T$ from $M^{p,\varphi_1}_{w}({\mathbb R}^n)$ to $WM^{p,\varphi_2}_{w}({\mathbb R}^n)$.
\end{cor}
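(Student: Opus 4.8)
The plan is to obtain this corollary simply as the special case $\Phi(t)=t^p$, $1\le p<\infty$, of Theorem~\ref{sufsiow}; no genuinely new argument is required, only a translation of the hypotheses and of the function spaces involved. First I would record the elementary facts about the Young function $\Phi(t)=t^p$: it is convex with $\Phi(0)=0$ and satisfies $\Phi(2r)=2^p\Phi(r)$, so $\Phi\in\Delta_2$; moreover $h_{\Phi}(t)=\sup_{s>0}(st)^p/s^p=t^p$, whence $i_{\Phi}=I_{\Phi}=p$. Consequently the requirement ``$\Phi\in\Delta_2$ and $w\in A_{i_{\Phi}}$'' in Theorem~\ref{sufsiow} becomes precisely ``$w\in A_p$'', which is exactly the hypothesis of the corollary.

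Next I would identify the spaces. From the definition of the Luxemburg norm, the condition $\int_{\Rn}\Phi(|f(x)|/\lambda)\,w(x)\,dx\le 1$ reads $\lambda^{-p}\int_{\Rn}|f(x)|^p w(x)\,dx\le 1$, so $\|f\|_{L^{\Phi}_{w}}=\|f\|_{L^p_w}$; the analogous computation starting from Definition~\ref{wwos} gives $\|f\|_{WL^{\Phi}_{w}}=\|f\|_{WL^p_w}$. Since also $\Phi^{-1}\big(w(B)^{-1}\big)=w(B)^{-1/p}$, comparing Definition~\ref{weigenOrlMor} with the norm of $M^{p,\varphi}_{w}$ recalled in the Introduction shows $M^{\Phi,\varphi}_{w}({\mathbb R}^n)=M^{p,\varphi}_{w}({\mathbb R}^n)$ with identical norms (as already noted in the Example of Section~3), and likewise $WM^{\Phi,\varphi}_{w}({\mathbb R}^n)=WM^{p,\varphi}_{w}({\mathbb R}^n)$. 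The structural conditions \eqref{es1} and \eqref{wgtcond} appearing in the corollary are verbatim the hypotheses of Theorem~\ref{sufsiow}. Hence that theorem applies and delivers the boundedness of $T$ from $M^{\Phi,\varphi_1}_{w}=M^{p,\varphi_1}_{w}$ to $WM^{\Phi,\varphi_2}_{w}=WM^{p,\varphi_2}_{w}$, which is the assertion.

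There is essentially no obstacle in this reduction; the only point requiring a little care is the endpoint $p=1$. For $1<p<\infty$ one has $\Phi(t)=t^p\in\Delta_2\cap\nabla_2$, and one could alternatively invoke Theorem~\ref{sufsio} to get the stronger $M^{p,\varphi_1}_{w}\to M^{p,\varphi_2}_{w}$ bound, from which the weak bound follows a fortiori. But for $p=1$ the function $\Phi(t)=t$ fails the $\nabla_2$-condition, so Theorem~\ref{sufsio} is unavailable; one must use Theorem~\ref{sufsiow}, which requires only $\Phi\in\Delta_2$ and rests on the weighted weak-type estimate $T:L^1_w\to WL^1_w$ for $w\in A_1$ (the case $\Phi(t)=t$ of Theorem~\ref{MaxweigOrlcW}). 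Stating the corollary for all $p\in[1,\infty)$ with weak target is exactly what makes it uniform across this endpoint.
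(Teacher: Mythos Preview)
Your proposal is correct and follows exactly the paper's approach: the paper obtains this corollary simply by specializing Theorem~\ref{sufsiow} to $\Phi(t)=t^p$, $1\le p<\infty$, without any additional argument. Your write-up merely spells out the routine verifications (that $\Phi\in\Delta_2$, $i_\Phi=p$, and the identification of the spaces) that the paper leaves implicit.
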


By using Theorem \ref{thm4.4.maxw} and going through a similar argument to that in Theorem \ref{thm4.4.}, we also have a necessary condition for the boundedness of genuine singular integral operator on weak generalized weighted Orlicz-Morrey spaces, the details being omitted.

\begin{thm}\label{thm4.4.w}
If $\Phi$ be a Young function, $w \in A_{i_{\Phi}}$, $\varphi_1,\varphi_2$ be positive measurable functions on $\Rn\times (0,\i)$ with satisfying the condition \eqref{condMnec} and $\varphi_1\in{\mathcal{G}}^{\Phi}_w$, then the condition \eqref{wgtcond} is necessary for the boundedness of genuine $T$ from $M^{\Phi,\varphi_1}_{w}({\mathbb R}^n)$ to $WM^{\Phi,\varphi_2}_{w}({\mathbb R}^n)$.
\end{thm}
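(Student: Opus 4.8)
The plan is to argue by contradiction, running the proof of Theorem~\ref{thm4.4.} with the strong target norm $\|\cdot\|_{M^{\Phi,\varphi_2}_{w}}$ replaced throughout by the weak norm $\|\cdot\|_{WM^{\Phi,\varphi_2}_{w}}$, Theorem~\ref{thm4.4.max} replaced by its weak-type analogue Theorem~\ref{thm4.4.maxw}, and Lemma~\ref{charwOrlMor} replaced by Lemma~\ref{charwOrlMorW}; the geometric part of the argument (the cone $V$ from the definition of a genuine singular integral operator, the test functions, and the lower pointwise bound for $Tf_m$) is kept verbatim. So I would assume $T$ bounded from $M^{\Phi,\varphi_1}_{w}(\Rn)$ to $WM^{\Phi,\varphi_2}_{w}(\Rn)$ and suppose, towards a contradiction, that for every integer $m\ge 3$ there is a ball $B_m$ with $\varphi_1(2^{m}B_m)^{-1}\le 2\varphi_2(B_m)^{-1}$. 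Taking $f_m(x)=\chi_{V}(-x)\chi_{2^{m-1}B_m\setminus 2B_m}(x)$ exactly as in Theorem~\ref{thm4.4.}, one still gets $Tf_m(x)\ge C\log m$ for all $x\in V\cap B_m$, that is $|Tf_m|\ge C\log m\,\chi_{V\cap B_m}$ pointwise.

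The first point to settle is the lower bound $\varphi_2(B_m)^{-1}\log m\lesssim\|Tf_m\|_{WM^{\Phi,\varphi_2}_{w}}$. Here I would use that $g\mapsto m(w,g/\lambda,t)$ is monotone in $|g|$, so that the weak functional $\|\cdot\|_{WL^{\Phi}_{w}}$ is monotone and $\|Tf_m\|_{WL^{\Phi}_{w}(B_m)}\ge C\log m\,\|\chi_{V\cap B_m}\|_{WL^{\Phi}_{w}}$. Then, testing the supremum defining $\|\cdot\|_{WM^{\Phi,\varphi_2}_{w}}$ at the ball $B_m$, using the identity $\|\chi_E\|_{WL^{\Phi}_{w}}=\frac{1}{\Phi^{-1}(w(E)^{-1})}$ (valid for any measurable $E$ by the same direct computation as in \eqref{charorlw}), and using $w(V\cap B_m)\approx w(B_m)$ — which holds because $w\in A_{i_\Phi}\subset A_\infty$ is doubling and the cone cap $V\cap B_m$ contains a ball of radius comparable to $r_{B_m}$, while convexity of $\Phi$ gives $\Phi^{-1}(\lambda s)\le\lambda\Phi^{-1}(s)$ for $\lambda\ge 1$ — I obtain the stated estimate. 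One also records, from $\chi_{B_m}\lesssim M\chi_{V\cap B_m}$ and the weak boundedness of $M$ from $M^{\Phi,\varphi_1}_{w}$ to $WM^{\Phi,\varphi_2}_{w}$ provided by Theorem~\ref{thm4.4.maxw} (applicable since $w\in A_{i_\Phi}$, $\varphi_1\in\mathcal{G}^{\Phi}_w$ and \eqref{condMnec} hold), together with Lemma~\ref{charwOrlMorW}, the estimate $\varphi_2(B_m)^{-1}\lesssim\|\chi_{V\cap B_m}\|_{M^{\Phi,\varphi_1}_{w}}$, just as the corresponding strong estimate is obtained in Theorem~\ref{thm4.4.}.

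Combining the lower bound with the assumed boundedness of $T$ and with $\|f_m\|_{M^{\Phi,\varphi_1}_{w}}\le\|\chi_{2^{m}B_m}\|_{M^{\Phi,\varphi_1}_{w}}\lesssim\varphi_1(2^{m}B_m)^{-1}$ (Lemma~\ref{charwOrlMor}, using $\varphi_1\in\mathcal{G}^{\Phi}_w$) gives
\[
\varphi_2(B_m)^{-1}\log m\lesssim\|Tf_m\|_{WM^{\Phi,\varphi_2}_{w}}\lesssim\|f_m\|_{M^{\Phi,\varphi_1}_{w}}\lesssim\varphi_1(2^{m}B_m)^{-1}\lesssim\varphi_2(B_m)^{-1},
\]
so $\log m\le C$ uniformly in $m$, which is impossible. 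Hence there is $m_0\in\mathbb{N}$ with $\varphi_1(2^{m_0}B)<\tfrac12\varphi_2(B)$ for all $B\in\mathcal{B}$, and from this \eqref{wgtcond} follows by the same geometric-series computation as at the end of the proof of Theorem~\ref{thm4.4.}: split $\int_r^\infty\varphi_1(x,t)\,\frac{dt}{t}$ over the intervals $[2^{(k-1)m_0}r,2^{km_0}r)$, majorize $\varphi_1(x,t)$ on the $k$-th interval by $\varphi_1(2^{(k-1)m_0}B)$ (legitimate because $\varphi_1\in\mathcal{G}^{\Phi}_w$ is almost decreasing in the radius), and sum.

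The only genuine difference from Theorem~\ref{thm4.4.} is that $\|\cdot\|_{WL^{\Phi}_{w}}$ need not be subadditive, so every step where the strong argument splits a norm additively has to be replaced by a monotonicity argument; I expect this, together with the elementary comparison $w(V\cap B_m)\approx w(B_m)$, to be the only place requiring attention, and it is routine, which is why the statement records that the details are omitted.
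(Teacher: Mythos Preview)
Your proposal is correct and follows precisely the route the paper indicates: rerun the proof of Theorem~\ref{thm4.4.} with $WM^{\Phi,\varphi_2}_{w}$ in place of $M^{\Phi,\varphi_2}_{w}$, invoking Theorem~\ref{thm4.4.maxw} and Lemma~\ref{charwOrlMorW} in place of Theorem~\ref{thm4.4.max} and Lemma~\ref{charwOrlMor}. Your explicit justification of the lower bound $\varphi_2(B_m)^{-1}\log m\lesssim\|Tf_m\|_{WM^{\Phi,\varphi_2}_{w}}$ via monotonicity of $\|\cdot\|_{WL^{\Phi}_{w}}$ together with the $A_\infty$ comparison $w(V\cap B_m)\approx w(B_m)$ is in fact more detailed than what the paper writes for the analogous strong-norm step in Theorem~\ref{thm4.4.}.
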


If we take $\Phi(t)=t^p,~1\le p<\infty$ at Theorem \ref{thm4.4.w} we get the following corollary for weak generalized weighted Morrey spaces.
\begin{cor}
If $1\le p<\infty$, $w \in A_{p}$, $\varphi_1,\varphi_2$ be positive measurable functions on $\Rn\times (0,\i)$ with satisfying the condition \eqref{condMnec} and $\varphi_1\in{\mathcal{G}}^{p}_w$, then the condition \eqref{wgtcond} is necessary for the boundedness of genuine $T$ from $M^{p,\varphi_1}_{w}({\mathbb R}^n)$ to $WM^{p,\varphi_2}_{w}({\mathbb R}^n)$.
\end{cor}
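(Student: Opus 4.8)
The plan is to carry out the proof of Theorem~\ref{thm4.4.} with the target norm $\|\cdot\|_{M^{\Phi,\varphi_2}_{w}}$ replaced everywhere by the weak norm $\|\cdot\|_{WM^{\Phi,\varphi_2}_{w}}$, using Theorem~\ref{thm4.4.maxw} in place of Theorem~\ref{thm4.4.max} and Lemma~\ref{charwOrlMorW} in place of Lemma~\ref{charwOrlMor} wherever a ball is measured in the target space. Since Theorem~\ref{thm4.4.maxw} holds for every Young function $\Phi$ under the sole assumption $w\in A_{i_\Phi}$, this substitution is precisely what lets us drop the hypothesis $\Phi\in\nabla_2$ present in Theorem~\ref{thm4.4.}.

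In detail, I would argue by contradiction. Suppose that for every integer $m\ge 3$ there is a ball $B_m=B(0,r_m)$, which we may center at the vertex of a rotated copy of the cone $V$, with $\varphi_1(2^m B_m)^{-1}\le 2\,\varphi_2(B_m)^{-1}$, and put $f_m:=\chi_{_V}(-\,\cdot\,)\,\chi_{_{2^{m-1}B_m\setminus 2B_m}}$, the same test function as in Theorem~\ref{thm4.4.}. For $x\in V\cap B_m$ and $y\in\mathrm{supp}(f_m)$ one has $x-y\in V$ and $r_m\le|x-y|\le 2^m r_m$, hence $|x-y|\approx|y|$, so the genuineness of $T$ yields
$$
Tf_m(x)\ \ge\ C\int_{\mathrm{supp}(f_m)}\frac{dy}{|x-y|^n}\ \gtrsim\ \int_{\mathrm{supp}(f_m)}\frac{dy}{|y|^n}\ \gtrsim\ \log m ,\qquad x\in V\cap B_m .
$$
Since $V$ is a cone we have $\chi_{_{B_m}}\lesssim M\chi_{_{V\cap B_m}}$ pointwise; combining this with Lemma~\ref{charwOrlMorW} and with the boundedness of $M$ from $M^{\Phi,\varphi_1}_{w}$ to $WM^{\Phi,\varphi_2}_{w}$ (Theorem~\ref{thm4.4.maxw}, whose hypotheses hold because $\varphi_1\in{\mathcal{G}}^{\Phi}_{w}$ and \eqref{condMnec} is in force) gives
$$
\varphi_2(B_m)^{-1}\ \lesssim\ \|\chi_{_{B_m}}\|_{WM^{\Phi,\varphi_2}_{w}}\ \lesssim\ \|M\chi_{_{V\cap B_m}}\|_{WM^{\Phi,\varphi_2}_{w}}\ \lesssim\ \|\chi_{_{V\cap B_m}}\|_{M^{\Phi,\varphi_1}_{w}} .
$$

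Using $0\le f_m\le\chi_{_{2^m B_m}}$, Lemma~\ref{charwOrlMor} together with $\varphi_1\in{\mathcal{G}}^{\Phi}_{w}$ gives $\|f_m\|_{M^{\Phi,\varphi_1}_{w}}\le\|\chi_{_{2^m B_m}}\|_{M^{\Phi,\varphi_1}_{w}}\lesssim\varphi_1(2^m B_m)^{-1}$. Feeding in the pointwise lower bound $Tf_m\gtrsim\log m$ on $V\cap B_m$, the assumed boundedness of $T$ from $M^{\Phi,\varphi_1}_{w}$ to $WM^{\Phi,\varphi_2}_{w}$, and the standing hypothesis $\varphi_1(2^m B_m)^{-1}\le 2\,\varphi_2(B_m)^{-1}$, we obtain
$$
\varphi_2(B_m)^{-1}\log m\ \lesssim\ \|Tf_m\|_{WM^{\Phi,\varphi_2}_{w}}\ \lesssim\ \|f_m\|_{M^{\Phi,\varphi_1}_{w}}\ \lesssim\ \varphi_1(2^m B_m)^{-1}\ \lesssim\ \varphi_2(B_m)^{-1},
$$
whence $\log m\lesssim 1$ with a constant independent of $m$, which is absurd. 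Hence there is $m_0\in\mathbb{N}$ with $\varphi_1(2^{m_0}B)<\tfrac12\varphi_2(B)$ for all $B\in\mathcal{B}$, and decomposing $\int_r^\i\varphi_1(x,t)\frac{dt}{t}$ over the annuli $2^{(k-1)m_0}r\le t<2^{km_0}r$ and invoking $\varphi_1\in{\mathcal{G}}^{\Phi}_{w}$ on each of them — exactly as at the end of the proof of Theorem~\ref{thm4.4.} — yields \eqref{wgtcond}.

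The only point where something genuinely new has to be checked, and it is minor, is the first inequality in the last display, $\varphi_2(B_m)^{-1}\log m\lesssim\|Tf_m\|_{WM^{\Phi,\varphi_2}_{w}}$: one must be sure that replacing the strong $L^\Phi_w$-norm of $Tf_m$ on $B_m$ by the weak one does not cost the logarithmic factor. By \eqref{charorlw} this reduces to the comparison $\|\chi_{_{V\cap B_m}}\|_{WL^\Phi_w}\approx\|\chi_{_{B_m}}\|_{WL^\Phi_w}$, which follows from three standard facts: the cone geometry forces $V\cap B_m$ to contain a ball of radius $\approx r_m$; $w\in A_\infty$ is doubling, so $w(V\cap B_m)\approx w(B_m)$; and $\Phi^{-1}(\lambda s)\le 2\lambda\,\Phi^{-1}(s)$ for $\lambda\ge 1$, which is immediate from \eqref{2.3}. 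Everything else is a verbatim transcription of the proof of Theorem~\ref{thm4.4.}.
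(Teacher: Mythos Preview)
Your proposal is correct and follows essentially the same approach as the paper. The paper obtains this corollary simply by specializing Theorem~\ref{thm4.4.w} to $\Phi(t)=t^p$, and the (omitted) proof of Theorem~\ref{thm4.4.w} is exactly what you have written out: rerun the argument of Theorem~\ref{thm4.4.} with the weak target norm, invoking Theorem~\ref{thm4.4.maxw} and Lemma~\ref{charwOrlMorW} in place of their strong counterparts.
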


In particular, by combining Theorems \ref{sufsiow} and \ref{thm4.4.w} we have the following result.
\begin{cor}
Let $\Phi$ be a Young function with $\Phi\in\Delta_2$ and $\varphi_1,\varphi_2$ be positive measurable functions on $\Rn\times (0,\i)$ with satisfying the condition \eqref{condMnec}. Let also $\varphi_1\in{\mathcal{G}}^{\Phi}_w$ and $w \in A_{i_{\Phi}}$. Then the condition \eqref{wgtcond} is necessary and sufficient for the boundedness of genuine Calder\'{o}n-Zygmund operator $T$ from $M^{\Phi,\varphi_1}_{w}({\mathbb R}^n)$ to $WM^{\Phi,\varphi_2}_{w}({\mathbb R}^n)$.
\end{cor}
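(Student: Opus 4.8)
The plan is to transcribe, almost verbatim, the proof of Theorem~\ref{thm4.4.}, systematically replacing the strong objects by their weak counterparts: the target space $M^{\Phi,\varphi_2}_{w}$ becomes $WM^{\Phi,\varphi_2}_{w}$, Lemma~\ref{charwOrlMor} is used for $\varphi_1$ exactly as before but the free lower bound coming from \eqref{charorlw} (equivalently Lemma~\ref{charwOrlMorW}) replaces it on the $\varphi_2$ side, and the strong maximal bound of Theorem~\ref{thm4.4.max} is replaced by the weak maximal bound of Theorem~\ref{thm4.4.maxw}. Note that the hypotheses here ($\Phi$ a Young function, $w\in A_{i_{\Phi}}$, $\varphi_1\in{\mathcal{G}}^{\Phi}_w$, and \eqref{condMnec}) are precisely those under which Theorem~\ref{thm4.4.maxw} furnishes the boundedness $M:M^{\Phi,\varphi_1}_{w}\to WM^{\Phi,\varphi_2}_{w}$, so no extra assumption on $\Phi$ (no $\Delta_2$ or $\nabla_2$) is needed, consistently with the statement of Theorem~\ref{thm4.4.w}.

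First I would argue by contradiction: assume that for every integer $m\ge 3$ there is a ball $B_m$ with $\varphi_1(2^m B_m)^{-1}\le 2\varphi_2(B_m)^{-1}$. With $V=R\{x:|x'|<\theta|x_n|\}$ the cone from the definition of a genuine singular integral operator, set $f_m(x)=\chi_{V}(-x)\,\chi_{2^{m-1}B_m\setminus 2B_m}(x)$. For $x\in V\cap B_m$ and $y\in\text{supp}(f_m)$ one has $x-y\in V$ and $r_{B_m}\le|x-y|\le 2^m r_{B_m}$, so the genuineness lower bound $K(x,y)\ge C|x-y|^{-n}$ gives $Tf_m(x)\ge C\int_{\text{supp}(f_m)}|y|^{-n}\,dy\gtrsim\log m$; this is the identical computation as in Theorem~\ref{thm4.4.}. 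Using the pointwise bound $\log m\cdot\chi_{V\cap B_m}\lesssim|Tf_m|$, the assumed boundedness of $T$ from $M^{\Phi,\varphi_1}_{w}$ to $WM^{\Phi,\varphi_2}_{w}$, the trivial estimate $f_m\le\chi_{2^m B_m}$, Lemma~\ref{charwOrlMor} applied to $\varphi_1\in{\mathcal{G}}^{\Phi}_w$, and finally the standing assumption on $B_m$ together with \eqref{condMnec}, one chains
$$
\varphi_2(B_m)^{-1}\log m\ \lesssim\ \|Tf_m\|_{WM^{\Phi,\varphi_2}_{w}}\ \lesssim\ \|f_m\|_{M^{\Phi,\varphi_1}_{w}}\ \lesssim\ \|\chi_{2^m B_m}\|_{M^{\Phi,\varphi_1}_{w}}\ \lesssim\ \varphi_1(2^m B_m)^{-1}\ \lesssim\ \varphi_2(B_m)^{-1},
$$
so $\log m\lesssim 1$ uniformly in $m$, a contradiction. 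Hence there is an $m_0\in\mathbb{N}$ with $\varphi_1(2^{m_0}B)<\tfrac12\varphi_2(B)$ for every $B\in\mathcal{B}$, and then the dyadic telescoping estimate at the end of the proof of Theorem~\ref{thm4.4.} (break $\int_r^{\infty}$ into the intervals $[2^{(k-1)m_0}r,2^{km_0}r]$, pull $\varphi_1(2^{(k-1)m_0}B)$ out of each using $\varphi_1\in{\mathcal{G}}^{\Phi}_w$, and sum the geometric series $\sum_k 2^{-(k-1)}$) delivers \eqref{wgtcond}.

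The delicate point is the very first inequality $\varphi_2(B_m)^{-1}\log m\lesssim\|Tf_m\|_{WM^{\Phi,\varphi_2}_{w}}$. Expanding the weak generalized weighted Orlicz--Morrey norm at the ball $B_m$ and inserting $|Tf_m|\ge C\log m\,\chi_{V\cap B_m}$ produces, via \eqref{charorlw}, the factor $\Phi^{-1}(w(B_m)^{-1})/\Phi^{-1}(w(V\cap B_m)^{-1})$, so one must know $w(V\cap B_m)\approx w(B_m)$. This is where the $A_{\infty}$ (equivalently $A_{i_{\Phi}}$) character of $w$ and the cone geometry enter: a cone-section of a ball carries weight comparable to that of the whole ball, and one may assume the balls $B_m$ are positioned so that $|V\cap B_m|\approx|B_m|$ (this is also what legitimizes the companion estimate $\chi_{B_m}\lesssim M\chi_{V\cap B_m}$, which feeds Theorem~\ref{thm4.4.maxw} and Lemma~\ref{charwOrlMorW} in the same way as in Theorem~\ref{thm4.4.}). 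I expect this weight-comparability step to be the main obstacle, since $WL^{\Phi}_{w}$ lacks the convenient triangle-type control available in $L^{\Phi}_{w}$; once it is in place the argument is a routine transcription, so the details may indeed be omitted as the paper states.
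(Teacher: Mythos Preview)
Your proposal treats only the \emph{necessity} half of the corollary; the \emph{sufficiency} direction is never addressed. In the paper the corollary is obtained simply by combining Theorem~\ref{sufsiow} (sufficiency) with Theorem~\ref{thm4.4.w} (necessity). To invoke Theorem~\ref{sufsiow} one must check hypothesis~\eqref{es1}, and this follows from \eqref{condMnec} together with $\varphi_1\in\mathcal{G}^{\Phi}_w$ (see the Remark following the strong-case corollaries). Note also that it is Theorem~\ref{sufsiow} that consumes the hypothesis $\Phi\in\Delta_2$; your observation that ``no $\Delta_2$ or $\nabla_2$ is needed'' is correct for Theorem~\ref{thm4.4.w} alone, but it leaves the $\Delta_2$ assumption in the corollary unexplained precisely because you have omitted the sufficiency step.

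For the necessity direction your transcription of the proof of Theorem~\ref{thm4.4.} --- replacing $M^{\Phi,\varphi_2}_w$ by $WM^{\Phi,\varphi_2}_w$, Lemma~\ref{charwOrlMor} by Lemma~\ref{charwOrlMorW} on the $\varphi_2$ side, and Theorem~\ref{thm4.4.max} by Theorem~\ref{thm4.4.maxw} --- is exactly the argument the paper has in mind for Theorem~\ref{thm4.4.w}. Your perceived ``main obstacle'' is not one: the pointwise estimate $\chi_{B_m}\lesssim M\chi_{V\cap B_m}$ together with Theorem~\ref{MaxweigOrlcWMAx} (via Remark~\ref{remwmi}) already yields $\Phi^{-1}\big(w(V\cap B_m)^{-1}\big)\lesssim\Phi^{-1}\big(w(B_m)^{-1}\big)$ at the Orlicz level, and inserting this into the definition of $\|\cdot\|_{WM^{\Phi,\varphi_2}_w}$ at the ball $B_m$ gives $\varphi_2(B_m)^{-1}\lesssim\|\chi_{V\cap B_m}\|_{WM^{\Phi,\varphi_2}_w}$ directly, so the first inequality of your chain goes through without any extra work.
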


\

\section{Commutators}
Given a function $b$ locally integrable on $\Rn$ and the Calder\'{o}n-Zygmund operator $T$, we consider the linear commutator $[b,T]$ defined by setting, for smooth, compactly supported functions $f$,
$$
[b,T](f) = bT(f) - T(bf).
$$

We recall the definition of the space of $BMO(\Rn)$.

\begin{defn}
Suppose that $b\in L_1^{\rm loc}(\Rn)$, let
\begin{equation*}
\|b\|_\ast=\sup_{x\in\Rn, r>0}\frac{1}{|B(x,r)|} \int_{B(x,r)}|b(y)-b_{B(x,r)}|dy,
\end{equation*}
where
$$
b_{B(x,r)}=\frac{1}{|B(x,r)|} \int_{B(x,r)} b(y)dy.
$$
Define
$$
BMO(\Rn)\equiv BMO=\{ b\in L_1^{\rm loc}(\Rn) ~ : ~ \| b \|_{\ast} < \infty  \}.
$$
\end{defn}

\begin{lem}\cite{S.Janson}
Let $b \in BMO$. Then there is a constant $C>0$ such that
\begin{equation} \label{propBMO}
\left|b_{B(x,r)}-b_{B(x,t)}\right| \le C \|b\|_\ast \ln \frac{t}{r} \;\;\; \mbox{for} \;\;\; 0<2r<t,
\end{equation}
where $C$ is independent of $b$, $x$, $r$ and $t$.
\end{lem}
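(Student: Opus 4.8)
The statement is the standard estimate comparing the averages of a $BMO$ function over two concentric balls, and the plan is to prove it by a telescoping argument along a dyadic chain of concentric balls. The starting point is the elementary observation that whenever $B\subseteq\widetilde B$ are balls one has
\[
\left|b_B-b_{\widetilde B}\right|=\frac{1}{|B|}\left|\int_B\bigl(b(y)-b_{\widetilde B}\bigr)\,dy\right|\le\frac{|\widetilde B|}{|B|}\cdot\frac{1}{|\widetilde B|}\int_{\widetilde B}\left|b(y)-b_{\widetilde B}\right|\,dy\le\frac{|\widetilde B|}{|B|}\,\|b\|_\ast .
\]
In particular, for concentric balls whose radii differ by a factor $2$ the ratio $|\widetilde B|/|B|$ equals $2^{n}$, so $|b_{B(x,s)}-b_{B(x,2s)}|\le 2^{n}\|b\|_\ast$ for every $s>0$ and every $x$.

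Given $0<2r<t$, I would set $k=\lceil\log_2(t/r)\rceil$, which is a positive integer satisfying $2^{k-1}r<t\le 2^{k}r$. Considering the chain $B(x,r)\subseteq B(x,2r)\subseteq\cdots\subseteq B(x,2^{k}r)$ and applying the triangle inequality together with the previous step to each consecutive pair gives
\[
\left|b_{B(x,r)}-b_{B(x,2^{k}r)}\right|\le\sum_{j=0}^{k-1}\left|b_{B(x,2^{j}r)}-b_{B(x,2^{j+1}r)}\right|\le 2^{n}k\,\|b\|_\ast .
\]
Since $B(x,t)\subseteq B(x,2^{k}r)$ and $2^{k}r<2t$, the basic comparison also yields $\left|b_{B(x,t)}-b_{B(x,2^{k}r)}\right|\le 2^{n}\|b\|_\ast$. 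Combining the two bounds produces $\left|b_{B(x,r)}-b_{B(x,t)}\right|\le 2^{n}(k+1)\|b\|_\ast$.

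It then remains to pass from $k$ back to $\ln(t/r)$. Because $t/r>2$ we have $\log_2(t/r)>1$, hence $k\le\log_2(t/r)+1\le 2\log_2(t/r)$, and likewise $k+1\le 3\log_2(t/r)=\tfrac{3}{\ln 2}\ln(t/r)$; this yields the claimed inequality with $C=\tfrac{3\cdot 2^{n}}{\ln 2}$, a constant depending only on the dimension. There is no genuine obstacle here: the only point requiring a little care is that the additive constants coming from the two comparisons of balls with comparable radii must be absorbed into $\ln(t/r)$, which is permissible precisely because the hypothesis $t>2r$ bounds $\ln(t/r)$ away from $0$ by $\ln 2$. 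The uniformity of $C$ in $b$, $x$, $r$, $t$ is then evident, since every estimate along the way involves only $\|b\|_\ast$ and powers of $2$.
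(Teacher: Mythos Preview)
Your argument is correct: the telescoping along a dyadic chain of concentric balls, together with the elementary comparison $|b_B-b_{\widetilde B}|\le(|\widetilde B|/|B|)\,\|b\|_\ast$ for nested balls, gives the claimed bound, and your bookkeeping of the constants (in particular absorbing the additive $+1$ using $\ln(t/r)\ge\ln 2$) is clean. The paper does not actually supply a proof of this lemma; it simply quotes the result from \cite{S.Janson}, so there is no in-paper argument to compare against. What you have written is precisely the standard proof of this well-known BMO estimate.
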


\begin{lem}\cite{KwokHirMJ}
Let $w\in A_{\infty}$, $b \in BMO$ and $\Phi$ be a Young function with $\Phi\in\Delta_2$. Then,
\begin{equation} \label{Bmo-Worlicz}
\sup_{x\in\Rn, r>0}\Phi^{-1}\big(w(B(x,r))^{-1}\big)
\left\|b-b_{B(x,r)}\right\|_{L^{\Phi}_{w}(B(x,r))}\lesssim \|b\|_\ast.
\end{equation}
\end{lem}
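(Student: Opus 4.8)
The plan is to reduce the claim, via the norm identity \eqref{charorlw}, to the uniform estimate $\|b-b_{B}\|_{L^{\Phi}_{w}(B)}\le C\,\|b\|_{\ast}\,\|\chi_{B}\|_{L^{\Phi}_{w}}=C\,\|b\|_{\ast}/\Phi^{-1}\big(w(B)^{-1}\big)$ over all balls $B\in\mathcal{B}$. Writing $A:=\Phi^{-1}\big(w(B)^{-1}\big)$, so that $\Phi(A)\,w(B)=1$, the definition of the Luxemburg norm shows that it is enough to find one absolute constant $C$ (depending only on $n$, on the $A_{\infty}$ constants of $w$, and on the $\Delta_{2}$ constant of $\Phi$) for which, with $\lambda:=C\|b\|_{\ast}/A$,
\[
\int_{B}\Phi\!\left(\frac{A\,|b(x)-b_{B}|}{C\,\|b\|_{\ast}}\right)w(x)\,dx\le\Phi(A)\,w(B).
\]

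The engine will be a weighted John--Nirenberg inequality. Since $w\in A_{\infty}$ there are $\delta>0$ and $C'\ge1$ with $w(E)\le C'(|E|/|B|)^{\delta}w(B)$ for $E\subset B$; feeding the classical John--Nirenberg bound $|\{x\in B:|b-b_{B}|>s\}|\le c_{1}e^{-c_{2}s/\|b\|_{\ast}}|B|$ into this yields constants $C_{0}\ge1$ and $\mu:=c_{2}\delta/\|b\|_{\ast}$ such that
\[
w\big(\{x\in B:|b(x)-b_{B}|>s\}\big)\le C_{0}\,e^{-\mu s}\,w(B),\qquad s>0.
\]
Next I would expand the modular on the left by the layer-cake formula,
\[
\int_{B}\Phi\!\left(\frac{A|b-b_{B}|}{C\|b\|_{\ast}}\right)w\,dx=\int_{0}^{\infty}w\Big(\Big\{x\in B:|b(x)-b_{B}|>\tfrac{C\|b\|_{\ast}}{A}\,t\Big\}\Big)\,d\Phi(t),
\]
and split the $t$-integral at $t=A/2$. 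On $[0,A/2)$ I bound the inner weighted measure crudely by $w(B)$, which contributes $\Phi(A/2)\,w(B)\le\tfrac12\Phi(A)\,w(B)$ by convexity of $\Phi$. On $[A/2,\infty)$ I use the weighted John--Nirenberg bound $C_{0}e^{-\beta t}w(B)$ with $\beta:=\mu C\|b\|_{\ast}/A=c_{2}\delta C/A$, decompose $[A/2,\infty)=\bigcup_{j\ge-1}[2^{j}A,2^{j+1}A)$, and on each annulus combine $e^{-\beta t}\le e^{-\beta2^{j}A}=e^{-(c_{2}\delta C)2^{j}}$ with the $\Delta_{2}$ estimate $\Phi(2^{j+1}A)-\Phi(2^{j}A)\le\Phi(2^{j+1}A)\le k^{\,j+1}\Phi(A)$, where $k$ is the $\Delta_{2}$ constant. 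This controls the second piece by $C_{0}\,\Phi(A)\,w(B)\sum_{j\ge-1}e^{-(c_{2}\delta C)2^{j}}k^{\,j+1}$.

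Finally I would choose the constant: the series $\sum_{j\ge-1}e^{-M2^{j}}k^{\,j+1}$ converges for every $M>0$ (the double exponential beats the geometric term) and tends to $0$ as $M\to\infty$, so taking $c_{2}\delta C=:M$ large enough in terms of $C_{0}$ and $k$ — which both fixes $C$ and guarantees $A/2\ge(\log C_{0})/\beta$, so the John--Nirenberg bound really is in its useful regime on $[A/2,\infty)$ — makes $C_{0}\sum_{j\ge-1}e^{-M2^{j}}k^{\,j+1}\le\tfrac12$. Adding the two pieces gives $\le\Phi(A)\,w(B)=1$, which is exactly the desired modular inequality. The step I expect to be the main obstacle is this last balancing act: the John--Nirenberg decay is double exponential while $\Phi\in\Delta_{2}$ only forbids faster-than-polynomial growth, and one must check that a single constant $C$, independent of the ball $B$ and hence of the possibly unbounded quantity $A=\Phi^{-1}(w(B)^{-1})$, works for every ball simultaneously. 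This is precisely where $\Phi\in\Delta_{2}$ is used; for a Young function outside $\Delta_{2}$ (for instance $\Phi(r)=e^{r}-r-1$) the series diverges once $A$ is large and the argument — as well as the statement itself for suitably scaled weights — breaks down.
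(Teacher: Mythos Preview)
The paper does not supply its own proof of this lemma; it is quoted from Ho \cite{KwokHirMJ} without argument, so there is no in-paper proof to compare your proposal against.

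Your argument is correct and is the natural one. The reduction via \eqref{charorlw} to the modular inequality $\int_{B}\Phi\big(A|b-b_{B}|/(C\|b\|_{\ast})\big)\,w\,dx\le\Phi(A)\,w(B)$, the derivation of the weighted John--Nirenberg bound from the unweighted one together with the $A_{\infty}$ condition $w(E)/w(B)\lesssim(|E|/|B|)^{\delta}$, the layer-cake representation, and the dyadic splitting on $[A/2,\infty)$ all work exactly as you describe. The key cancellation $\beta\cdot2^{j}A=c_{2}\delta C\cdot2^{j}=M\,2^{j}$, which makes the dyadic sum $\sum_{j\ge-1}e^{-M2^{j}}k^{\,j+1}$ independent of $A$ and hence of the ball, is precisely the point, and your diagnosis of the role of $\Delta_{2}$ is accurate: it is exactly what guarantees the polynomial growth $\Phi(2^{j+1}A)\le k^{\,j+1}\Phi(A)$ that the double-exponential decay can absorb. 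Two minor clean-ups: the parenthetical about $A/2\ge(\log C_{0})/\beta$ is unnecessary, since the weighted John--Nirenberg estimate $w(\{|b-b_{B}|>s\}\cap B)\le C_{0}e^{-\mu s}w(B)$ holds for \emph{all} $s>0$ (it is merely vacuous for small $s$) and the constant $C_{0}$ is already absorbed when you force $C_{0}\sum_{j}e^{-M2^{j}}k^{\,j+1}\le\tfrac12$; and you should note at the outset that one may assume $\|b\|_{\ast}>0$, the case $\|b\|_{\ast}=0$ being trivial.
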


The following result concerning the boundedness of the operator $[b,T]$ on weighted $L^p$ space is known.
\begin{thm}\label{thmwbcomlp}\cite{Alvarezetal}
Let $1<p<\infty$, $w\in A_p$ and $b\in BMO$. Then $[b,T]$ is bounded on $L^{p}_{w}(\Rn)$.
\end{thm}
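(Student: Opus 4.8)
The plan is to use the complex-variable method of Coifman--Rochberg--Weiss, exploiting the stability of the $A_p$ class under multiplication by $e^{\lambda b}$, $b\in BMO$, for small $|\lambda|$. For $z\in\mathbb{C}$ set $T_z f := e^{zb}\,T(e^{-zb}f)$. Differentiating in $z$ gives $[b,T]f=\frac{d}{dz}T_zf\big|_{z=0}$, so by the Cauchy integral formula, for any $\varepsilon>0$,
\begin{equation*}
[b,T]f(x)=\frac{1}{2\pi i}\int_{|z|=\varepsilon}\frac{T_z f(x)}{z^{2}}\,dz ,
\end{equation*}
and hence, by Minkowski's integral inequality,
\begin{equation*}
\|[b,T]f\|_{L^{p}_{w}}\le \frac{1}{\varepsilon}\,\sup_{|z|=\varepsilon}\|T_z f\|_{L^{p}_{w}} .
\end{equation*}
Since $\|T_z f\|_{L^{p}_{w}}^{p}=\int_{\Rn}|T(e^{-zb}f)(x)|^{p}\,e^{p\,\mathrm{Re}(z)\,b(x)}\,w(x)\,dx$, the whole matter reduces to showing that $v_z:=e^{p\,\mathrm{Re}(z)\,b}\,w$ lies in $A_p$ with $[v_z]_{A_p}$ bounded uniformly for $|z|=\varepsilon$, once $\varepsilon$ is chosen small depending only on $p$, $n$, $\|b\|_{\ast}$ and $[w]_{A_p}$. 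Indeed, the classical Coifman--Fefferman weighted $L^p$ bound for $T$ then gives, uniformly in $z$,
\begin{equation*}
\|T_zf\|_{L^p_w}^p\lesssim\int_{\Rn}|e^{-zb}f|^{p}\,v_z\,dx=\int_{\Rn}|f|^{p}\,w\,dx=\|f\|_{L^p_w}^p ,
\end{equation*}
and combining with the previous display completes the argument.

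The heart of the proof, and the step I expect to be the main obstacle, is the uniform $A_p$-estimate for $v_z$, which combines two self-improvement phenomena. First, the John--Nirenberg inequality yields $\lambda_0=\lambda_0(n,\|b\|_\ast)>0$ such that $e^{\mu b}\in A_p$ with $[e^{\mu b}]_{A_p}$ uniformly bounded for $|\mu|\le\lambda_0$. Second, the openness of $A_p$: applying the reverse H\"older inequality to $w$ and to $w^{-1/(p-1)}$ produces $\delta>0$ with $w^{1+\delta}\in A_p$ and $[w^{1+\delta}]_{A_p}\lesssim [w]_{A_p}^{1+\delta}$. Finally one uses the stability of $A_p$ under geometric means: if $u_0,u_1\in A_p$ and $\theta\in[0,1]$, then $u_0^{\theta}u_1^{1-\theta}\in A_p$ with $[u_0^{\theta}u_1^{1-\theta}]_{A_p}\le [u_0]_{A_p}^{\theta}[u_1]_{A_p}^{1-\theta}$ (immediate from H\"older's inequality in the definition of $A_p$). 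Writing $v_z=(w^{1+\delta})^{1/(1+\delta)}\big(e^{p\,\mathrm{Re}(z)(1+\delta)b/\delta}\big)^{\delta/(1+\delta)}$ and choosing $\varepsilon$ so small that $p\varepsilon(1+\delta)/\delta\le\lambda_0$, one obtains $v_z\in A_p$ with $[v_z]_{A_p}$ controlled solely in terms of $[w]_{A_p}$, $\|b\|_\ast$, $p$ and $n$, as needed.

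It remains to justify the Cauchy representation rigorously, i.e.\ the analyticity of $z\mapsto T_zf$ as an $L^p_w$-valued map near $\{|z|\le\varepsilon\}$. This is immediate when $b$ is bounded: $z\mapsto e^{-zb}f$ is then a norm-convergent power series in $L^p_w$, $T$ is bounded on $L^p_w$ (as $w\in A_p$), and multiplication by the bounded analytic function $e^{zb}$ preserves analyticity. The general case follows by truncation: replace $b$ by $b_N:=\max(-N,\min(N,b))$, which satisfies $\|b_N\|_{\ast}\le C\|b\|_{\ast}$ with $C$ independent of $N$, apply the above to get $\|[b_N,T]f\|_{L^p_w}\le C\|f\|_{L^p_w}$ with $C$ independent of $N$ for bounded compactly supported $f$, and let $N\to\infty$ using Fatou's lemma together with $b_N\to b$ a.e.; a density argument then removes the restriction on $f$. (An alternative route avoids complex analysis: prove the pointwise estimate $M^{\#}_{\delta}([b,T]f)\lesssim\|b\|_\ast\big(M_{\eta}(Tf)+M(Mf)\big)$ for $0<\delta<\eta\le 1$ via the decomposition $[b,T]f=(b-b_B)Tf-T((b-b_B)f\chi_{2B})-T((b-b_B)f\chi_{\Rn\setminus 2B})$, the Hörmander kernel estimate and John--Nirenberg, and then invoke the Fefferman--Stein inequality $\|g\|_{L^p_w}\lesssim\|M^{\#}_{\delta}g\|_{L^p_w}$ for $w\in A_\infty$, the weighted $L^p$ bound for $T$, and the $L^p_w$-boundedness of $M$ for $w\in A_p$.)
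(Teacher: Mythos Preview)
The paper does not give its own proof of this theorem; it merely quotes the result from \cite{Alvarezetal} and uses it as input for Theorem~\ref{thmwbcomorlc}. So there is nothing in the paper to compare your argument against.

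Your proof is correct and is, in fact, essentially the argument of the cited reference \cite{Alvarezetal}: the conjugation operators $T_z f=e^{zb}T(e^{-zb}f)$, the Cauchy formula yielding $[b,T]$ as a contour average of $T_z$, and the uniform $A_p$ bound on $e^{p\,\mathrm{Re}(z)\,b}w$ via John--Nirenberg, reverse H\"older self-improvement $w^{1+\delta}\in A_p$, and the H\"older-inequality stability of $A_p$ under geometric means. The factorization $v_z=(w^{1+\delta})^{1/(1+\delta)}\bigl(e^{p\,\mathrm{Re}(z)(1+\delta)b/\delta}\bigr)^{\delta/(1+\delta)}$ is exactly the device used there. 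Your care in justifying analyticity by first treating bounded $b$ and then truncating (with $\|b_N\|_\ast\lesssim\|b\|_\ast$) is appropriate, and the alternative sharp-maximal-function route you sketch is also standard and valid. In short: nothing to correct, and you have reproduced the original source rather than diverged from the paper, which offers no independent proof.
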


From this result and \cite[Theorem 2.7]{LiHuYa}, we have the following boundedness of $[b,T]$ on $L^{\Phi}_{w}(\Rn)$.

\begin{thm}\label{thmwbcomorlc}
Let $\Phi$ be a Young function which is of lower type $p_0$ and upper type $p_1$ with $1<p_0\le p_1<\i$, $w\in A_{p_0}$ and $b\in BMO$. Then $[b,T]$ is bounded on $L^{\Phi}_{w}(\Rn)$.
\end{thm}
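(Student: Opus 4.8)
The plan is to obtain the Orlicz estimate from the weighted $L^{p}$ estimate by a Rubio de Francia type extrapolation, with Theorem \ref{thmwbcomlp} supplying the input and \cite[Theorem 2.7]{LiHuYa} supplying the extrapolation step; this is exactly the route indicated just before the statement.

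First I would record the extrapolation hypothesis at the single exponent $p_0$. By Theorem \ref{thmwbcomlp}, for \emph{every} weight $v\in A_{p_0}$ the commutator $[b,T]$ is bounded on $L^{p_0}_{v}(\Rn)$, and the operator norm depends only on $n$, $p_0$, $\|b\|_{\ast}$ and the $A_{p_0}$-constant of $v$. Rewriting this in the pair form used by extrapolation: for all $v\in A_{p_0}$,
$$
\big\|\,|[b,T]f|\,\big\|_{L^{p_0}_{v}}\ \le\ C\,\big\|\,|f|\,\big\|_{L^{p_0}_{v}}\qquad\text{for every }f\in L^{\infty}_{\rm comp}(\Rn),
$$
the restriction to $L^{\infty}_{\rm comp}(\Rn)$ ensuring that the left-hand side is a priori finite.

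Next I would pass to the Orlicz scale. Since $\Phi$ is of lower type $p_0$ and upper type $p_1$ with $1<p_0\le p_1<\i$, Remark \ref{remlowup} gives $\Phi\in\Delta_2\cap\nabla_2$; this, together with $w\in A_{p_0}$, is precisely the set of hypotheses under which the weighted extrapolation to Orlicz spaces in \cite[Theorem 2.7]{LiHuYa} applies. Feeding the family of inequalities above into that theorem yields, for every $w\in A_{p_0}$,
$$
\|[b,T]f\|_{L^{\Phi}_{w}}\ \lesssim\ \|f\|_{L^{\Phi}_{w}}\qquad\text{for all }f\in L^{\infty}_{\rm comp}(\Rn),
$$
with implicit constant depending only on $n$, $p_0$, $p_1$, $\|b\|_{\ast}$, the type constants of $\Phi$ and the $A_{p_0}$-constant of $w$. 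Finally, since $\Phi\in\Delta_2$ the class $L^{\infty}_{\rm comp}(\Rn)$ is dense in $L^{\Phi}_{w}(\Rn)$, so $[b,T]$ extends uniquely to a bounded operator on $L^{\Phi}_{w}(\Rn)$, agreeing with the pointwise definition of $[b,T]$ wherever the latter makes sense.

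The only delicate point is bookkeeping rather than analysis: extrapolation is formulated for pairs of nonnegative measurable functions obeying an a priori finite inequality, so the argument must be run first on $L^{\infty}_{\rm comp}(\Rn)$ and extended by density only afterwards, and one must invoke the two quoted results with matching normalizations ($p_0$ as the fixed extrapolation exponent, $w\in A_{p_0}$, lower/upper type constants of $\Phi$). No new estimates on $T$, on the maximal operator, or involving $BMO$ are needed — the content is entirely carried by Theorem \ref{thmwbcomlp} and the extrapolation theorem \cite[Theorem 2.7]{LiHuYa}.
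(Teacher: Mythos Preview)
Your proposal is correct and follows exactly the route the paper itself indicates: it deduces the Orlicz bound from Theorem~\ref{thmwbcomlp} via the extrapolation theorem \cite[Theorem~2.7]{LiHuYa}, with the density argument supplying the extension from $L^{\infty}_{\rm comp}(\Rn)$ to all of $L^{\Phi}_{w}(\Rn)$. The paper gives no further details beyond that one-line deduction, so your write-up is, if anything, more careful than the original.
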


Additionally, we need the following lemma. For the proof of Lemma \ref{weghtbmo}, see \cite{grafakos} for example.
\begin{lem}\label{weghtbmo}
Let $0 < p < \infty$, $w \in A_{\infty}$ and $b \in BMO$. Then for any ball $B$, we have that
$$
\left(\frac{1}{w(B)} \int_{B}|b(y)-b_{B}|^p w(y)dy\right)^{\frac{1}{p}}\leq C \|b\|_\ast.
$$
\end{lem}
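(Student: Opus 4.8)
The plan is to combine the John--Nirenberg inequality for $BMO$ with the quantitative $A_\infty$ property of the weight and then collapse the whole estimate to an elementary one-dimensional integral. We may assume $\|b\|_\ast>0$, since otherwise $b$ is constant almost everywhere and both sides vanish, and then by homogeneity in $b$ we normalize $\|b\|_\ast=1$; fix a ball $B$ and set $E_\lambda:=\{y\in B:|b(y)-b_B|>\lambda\}$ for $\lambda>0$.

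First I would record the two classical ingredients. The John--Nirenberg inequality supplies constants $c_1,c_2>0$ depending only on $n$ with
\begin{equation*}
|E_\lambda|\le c_1\,e^{-c_2\lambda}\,|B|\qquad(\lambda>0).
\end{equation*}
Since $w\in A_\infty$, the weight obeys the quantitative $A_\infty$ estimate: there exist $C_0>0$ and $\delta\in(0,1]$, depending only on $w$, such that $w(E)/w(B)\le C_0(|E|/|B|)^{\delta}$ for every measurable $E\subset B$. Combining these two facts with $E=E_\lambda$ gives
\begin{equation*}
w(E_\lambda)\le C_0\,c_1^{\delta}\,w(B)\,e^{-c_2\delta\lambda}.
\end{equation*}

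Next I would express the left-hand side of the asserted inequality through the layer-cake (Cavalieri) formula for the measure $w\,dy$ on $B$,
\begin{equation*}
\frac{1}{w(B)}\int_B|b(y)-b_B|^p\,w(y)\,dy=\frac{p}{w(B)}\int_0^\infty\lambda^{p-1}\,w(E_\lambda)\,d\lambda,
\end{equation*}
and insert the bound for $w(E_\lambda)$; what remains is the elementary integral $p\int_0^\infty\lambda^{p-1}e^{-c_2\delta\lambda}\,d\lambda=\Gamma(p+1)(c_2\delta)^{-p}$. Hence the left-hand side is at most a constant $C$ depending only on $p$, $n$ and the $A_\infty$ constants of $w$; taking $p$-th roots and undoing the normalization yields the claim with a constant independent of $B$.

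There is no genuine obstacle here, only the bookkeeping of constants: one has to check that the John--Nirenberg constants are uniform over all balls and independent of $b$ (which is what makes the normalization harmless) and that $C_0,\delta$ depend only on $w$, so that the final $C$ is truly independent of $B$. The argument is insensitive to the value of $p$, so the range $0<p<1$ is handled verbatim; alternatively one could first reduce to $p\ge1$ by Jensen's inequality and then argue by H\"older's inequality, invoking the reverse H\"older inequality of $A_\infty$ weights together with the $L^s$-form of John--Nirenberg, $\big(|B|^{-1}\int_B|b-b_B|^s\,dy\big)^{1/s}\lesssim\|b\|_\ast$ for every $s<\infty$.
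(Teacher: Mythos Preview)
Your argument is correct: the combination of the John--Nirenberg exponential decay with the quantitative $A_\infty$ estimate $w(E)/w(B)\le C_0(|E|/|B|)^\delta$ and the layer-cake formula is exactly the standard route, and your bookkeeping of constants is accurate. The paper itself does not supply a proof of this lemma but simply refers the reader to Grafakos, \emph{Modern Fourier Analysis}; the proof there is essentially the one you have written, so there is nothing further to compare.
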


The main result of this section is as follows.
\begin{thm}\label{sufsiocom}
Let $\varphi_1,\varphi_2$ be positive measurable functions satisfying the condition \eqref{es1} and
\begin{equation}\label{wgtcondcom}
\int_{r}^{\infty}\big(1+\ln{\frac{t}{r}}\big)\varphi_1(x,t)\frac{dt}{t}\le C \varphi_2(x,r),
\end{equation}
where $C$ does not depend on $x$ and $r$.
Let also $\Phi$ be a Young function which is of lower type $p_0$ and upper type $p_1$ with $1<p_0\le p_1<\i$, $w\in A_{p_0}$ and $b\in BMO$. Then the commutator of Calder\'{o}n-Zygmund operator $[b,T]$ is bounded from $M^{\Phi,\varphi_1}_{w}({\mathbb R}^n)$ to $M^{\Phi,\varphi_2}_{w}({\mathbb R}^n)$.
\end{thm}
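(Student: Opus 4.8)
The plan is to mirror the proof of Theorem \ref{sufsio}. Fix $f\in M^{\Phi,\varphi_1}_{w}(\Rn)$; for $x\in\Rn$ choose a ball $B=B(x_0,r)\ni x$, write $f=f_1+f_2$ with $f_1=f\chi_{2B}$, and set
\[
[b,T]f(x):=[b,T]_0 f_1(x)+\int_{\Rn}\big(b(x)-b(y)\big)K(x,y)f_2(y)\,dy,
\]
where $[b,T]_0$ denotes the commutator acting on $L^{\Phi}_{w}(\Rn)$, which is bounded by Theorem \ref{thmwbcomorlc}. First I would check, as in Theorem \ref{sufsio}, that the tail integral converges absolutely for a.e.\ $x$ and that the value is independent of the choice of $B$, using the boundedness and linearity of $[b,T]_0$ on $L^{\Phi}_{w}(\Rn)$; the absolute convergence will be a by-product of the tail estimates below.

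For the local term, Theorem \ref{thmwbcomorlc} gives $\|[b,T]_0 f_1\|_{L^{\Phi}_{w}(B)}\le\|[b,T]_0 f_1\|_{L^{\Phi}_{w}(\Rn)}\lesssim\|f\|_{L^{\Phi}_{w}(2B)}$, and then \eqref{es1} together with \eqref{es3} yields, exactly as in \eqref{es4ftg}, that $\varphi_2(B)^{-1}\Phi^{-1}\big(w(B)^{-1}\big)\|[b,T]_0 f_1\|_{L^{\Phi}_{w}(B)}\lesssim\|f\|_{M^{\Phi,\varphi_1}_{w}}$. For the tail, writing $b(x)-b(y)=\big(b(x)-b_B\big)-\big(b(y)-b_B\big)$ and using $\tfrac12|x_0-y|\le|x-y|\le\tfrac32|x_0-y|$ for $x\in B$, $y\in\dual{(2B)}$, I would split the tail into
\[
J_1(x)=|b(x)-b_B|\int_{\dual{(2B)}}\frac{|f(y)|}{|x_0-y|^n}\,dy,\qquad
J_2(x)=\int_{\dual{(2B)}}\frac{|b(y)-b_B|\,|f(y)|}{|x_0-y|^n}\,dy.
\]
The term $J_1$ is handled precisely by \eqref{sal00} followed by \eqref{wgtcond} (which is weaker than \eqref{wgtcondcom}): the integral is $\lesssim\|f\|_{M^{\Phi,\varphi_1}_{w}}\varphi_2(B)$, so $J_1(x)\lesssim|b(x)-b_B|\,\|f\|_{M^{\Phi,\varphi_1}_{w}}\,\varphi_2(B)$.

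For $J_2$, Fubini's theorem as in the proof of Theorem \ref{sufsio} reduces matters to controlling $\int_{B(x_0,t)}|b(y)-b_B|\,|f(y)|\,dy$ for $t\ge 2r$. Bounding $|b(y)-b_B|\le|b(y)-b_{B(x_0,t)}|+|b_{B(x_0,t)}-b_B|$, estimating $|b_{B(x_0,t)}-b_B|\lesssim\|b\|_\ast\big(1+\ln\tfrac{t}{r}\big)$ by \eqref{propBMO}, applying Lemma \ref{lemHold} to $\int_{B(x_0,t)}|f|$, and applying the weighted local estimate
\[
\int_{B'}|b(y)-b_{B'}|\,|f(y)|\,dy\lesssim\|b\|_\ast\,|B'|\,\Phi^{-1}\big(w(B')^{-1}\big)\,\|f\|_{L^{\Phi}_{w}(B')}
\]
to the first summand, one obtains $\int_{B(x_0,t)}|b-b_B|\,|f|\,dy\lesssim\|b\|_\ast\big(1+\ln\tfrac{t}{r}\big)|B(x_0,t)|\,\Phi^{-1}\big(w(B(x_0,t))^{-1}\big)\|f\|_{L^{\Phi}_{w}(B(x_0,t))}$. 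Integrating against $t^{-n-1}\,dt$, using $|B(x_0,t)|\approx t^n$, the Morrey norm of $f$, and finally \eqref{wgtcondcom}, gives
\[
J_2(x)\lesssim\|b\|_\ast\,\|f\|_{M^{\Phi,\varphi_1}_{w}}\int_{2r}^{\i}\big(1+\ln\tfrac{t}{r}\big)\varphi_1(x_0,t)\frac{dt}{t}\lesssim\|b\|_\ast\,\|f\|_{M^{\Phi,\varphi_1}_{w}}\,\varphi_2(B).
\]

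Collecting the estimates, for $x\in B$ we get $|[b,T]f(x)|\lesssim|[b,T]_0 f_1(x)|+\big(|b(x)-b_B|+1\big)\|f\|_{M^{\Phi,\varphi_1}_{w}}\varphi_2(B)$; applying $\|\cdot\|_{L^{\Phi}_{w}(B)}$, invoking \eqref{charorlw} for $\|\chi_B\|_{L^{\Phi}_{w}}$ and \eqref{Bmo-Worlicz} for $\|b-b_B\|_{L^{\Phi}_{w}(B)}$, multiplying by $\varphi_2(B)^{-1}\Phi^{-1}\big(w(B)^{-1}\big)$ and taking the supremum over $B\in\mathcal{B}$ completes the argument. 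The only ingredient that is not a direct transcription of the proof of Theorem \ref{sufsio} is the weighted local estimate displayed above, and I expect establishing it to be the main obstacle. I would prove it by Hölder's inequality against Lebesgue measure with exponents $(p_0,p_0')$ (after inserting $w^{1/p_0}w^{-1/p_0}$), bounding the factor involving $f$ by the embedding $\|f\|_{L^{p_0}_{w}(B')}\lesssim w(B')^{1/p_0}\Phi^{-1}\big(w(B')^{-1}\big)\|f\|_{L^{\Phi}_{w}(B')}$, which follows from the lower type $p_0$ of $\Phi$; bounding the factor involving $b-b_{B'}$ by Lemma \ref{weghtbmo} applied to the dual weight $\sigma=w^{1-p_0'}\in A_{p_0'}$; and absorbing the resulting quantity $w(B')^{1/p_0}\sigma(B')^{1/p_0'}$ into $|B'|$ by the $A_{p_0}$ condition on $w$.
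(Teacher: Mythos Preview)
Your proposal is correct and follows the paper's proof almost verbatim: the same decomposition $f=f_1+f_2$, the same local estimate via Theorem \ref{thmwbcomorlc}, the same tail split $b(x)-b(y)=(b(x)-b_B)-(b(y)-b_B)$, the same Fubini/log-BMO treatment, and the same endgame via \eqref{charorlw} and \eqref{Bmo-Worlicz}.

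The one point of divergence is exactly the step you flag as the main obstacle, namely the weighted local estimate $\int_{B'}|b-b_{B'}|\,|f|\lesssim\|b\|_\ast\,|B'|\,\Phi^{-1}(w(B')^{-1})\,\|f\|_{L^{\Phi}_{w}(B')}$. The paper obtains this by Orlicz--H\"older \eqref{Hldw} (writing the integrand as $|f|\cdot|b-b_{B'}|w^{-1}\cdot w$) together with the dual-norm bound $\||b-b_{B'}|w^{-1}\|_{L^{\widetilde\Phi}_w(B')}\lesssim|B'|\,\Phi^{-1}(w(B')^{-1})$, whose proof uses \emph{both} the lower and upper types of $\Phi$ and Lemma \ref{weghtbmo} for the two dual weights $w^{1-p_i'}$. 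Your route---$L^{p_0}$--H\"older, the local embedding $\|f\|_{L^{p_0}_w(B')}\lesssim w(B')^{1/p_0}\Phi^{-1}(w(B')^{-1})\|f\|_{L^{\Phi}_{w}(B')}$ from lower type $p_0$, Lemma \ref{weghtbmo} for $\sigma=w^{1-p_0'}\in A_{p_0'}$, and $w(B')^{1/p_0}\sigma(B')^{1/p_0'}\lesssim|B'|$ from the $A_{p_0}$ condition---is valid and slightly more economical, since it only uses the lower type. Either way the resulting inequality is the same, so the remainder of the argument is identical to the paper's.
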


\begin{proof}

For the moment, we denote the commutator on $L^{\Phi}_{w}(\Rn)$ by $[b,T]_{0}$ to avoid confusion.
For $f\in M^{\Phi,\varphi_1}_{w}({\mathbb R}^n)$ and $x\in\Rn$ we choose a ball $B=B(x_0,r)\in \mathcal{B}$ such that $x\in B$, and let
$$
[b,T]f(x):=[b,T]_{0}f_{1}(x)+\int_{\Rn}K(x,y)(b(y)-b(x))f_{2}(y)dy,\, f=f_1+f_2,~f_1=f\chi _{2B}.
$$

First we show that $[b,T]f(x)$ is well-defined $a.e.$ $x$ and independent of the choice $B$ containing $x$.

As $[b,T]_{0}$ is bounded on $L^{\Phi}_{w}(\Rn)$ provided by Theorem \ref{thmwbcomorlc} and $f_1\in L^{\Phi}_{w}(\Rn)$, $[b,T]_{0}f_{1}$ is well-defined.

Next, we show that the second-term of the right-hand side defining $[b,T]f(x)$ converges absolutely for any $f\in M^{\Phi,\varphi_1}_{w}({\mathbb R}^n)$ and almost every $x\in\Rn$.

Observe  that the inclusions   $x\in B$, $y\in \dual{(2B)}$ imply
$\frac{1}{2}|x_0-y|\le |x-y|\le\frac{3}{2}|x_0-y|$.
Then for all $x\in B$
\begin{align*}
&\int_{\Rn}|K(x,y)(b(y)-b(x))f_{2}(y)|dy \lesssim \int_{\dual (2B)} \frac{|b(y)-b(x)|}{|x-y|^{n}}|f(y)| dy
\\
&\lesssim \int_{\dual (2B)} \frac{|b(y)-b(x)|}{|x_0-y|^{n}}|f(y)| dy
\\
&\lesssim \int_{\dual (2B)} \frac{|b(y)-b_{B}|}{|x_0-y|^{n}}|f(y)|dy
+\int_{\dual (2B)} \frac{|b(x)-b_{B}|}{|x_0-y|^{n}}|f(y)|dy
\\
&=I_1+I_2.
\end{align*}
By an argument similar to that used in the estimate (2.25) in \cite{LiNaYaZh}, we have
\begin{equation}\label{esbjm}
\left\||b(\cdot)-b_{B}|w^{-1}(\cdot)\right\|_{L^{\widetilde{\Phi}}_{w}(B)}\lesssim \Phi^{-1}\big(w(B)^{-1}\big)|B|.
\end{equation}
For the sake of completeness, we give the proof the estimate \eqref{esbjm}. Taking into account \eqref{2.3} and Remark \ref{uplowtyp}, it follows that
\begin{align*}
&  \int_{B} \widetilde{\Phi}\big(\frac{|b(x)-b_{B}|w^{-1}(x)}{\Phi^{-1}\big(w(B)^{-1}\big)|B|} \big)w(x)dx  \lesssim \int_{B} \widetilde{\Phi}\big(\frac{|b(x)-b_{B}|\widetilde{\Phi}^{-1}\big(w(B)^{-1}\big)w(B)}{w(x)|B|} \big)w(x)dx
\\
 & \lesssim \frac{1}{w(B)}\int_{B}\left\{\sum_{i=0}^{1}\left[\frac{|b(x)-b_{B}|}{w(x)}\right]^{p_{i}^{\prime}}\left[\frac{w(B)}{|B|}\right]^{p_{i}^{\prime}}\right\}w(x)dx.
\end{align*}
Since $w\in A_{p_0}\subset A_{p_1}$, we know that $w^{1-p_{i}^{\prime}}\in A_{p_{i}^{\prime}}$ for $i\in\{0,1\}$ (see, for example, \cite[p. 136]{Duoandik}). By this, the H\"{o}lder inequality and Lemma \ref{weghtbmo}, we conclude that, for $i\in\{0,1\}$,
\begin{align*}
&\frac{1}{w(B)}\int_{B}|b(x)-b_{B}|^{p_{i}^{\prime}}\left[\frac{w(B)}{|B|}\right]^{p_{i}^{\prime}}\frac{1}{w^{p_{i}^{\prime}}(x)}w(x)dx
\\
&\approx \left[\frac{1}{|B|}\int_{B}w(x)dx\right]^{p_{i}^{\prime}-1}\left[\frac{1}{|B|}\int_{B}w^{1-p_{i}^{\prime}}(x)dx\right]
\\
&~~\times \left\{\frac{1}{[w(B)]^{1-p_{i}^{\prime}}}\int_{B}|b(x)-b_{B}|^{p_{i}^{\prime}}w^{1-p_{i}^{\prime}}(x)dx\right\}\lesssim 1,
\end{align*}
which yields to \eqref{esbjm}.

Now, let us estimate $I_1$.
\begin{align*}
I_1&\thickapprox \int_{\dual
(2B)}|b(y)-b_{B}||f(y)|\int_{|x_0-y|}^{\infty}\frac{dt}{t^{n+1}}dy
\\
&\thickapprox  \int_{2r}^{\infty}\int_{2r\leq |x_0-y|\leq t}
|b(y)-b_{B}||f(y)|dy\frac{dt}{t^{n+1}}
\\
&\lesssim  \int_{2r}^{\infty}\int_{B(x_0,t)}
|b(y)-b_{B}||f(y)|dy\frac{dt}{t^{n+1}}.
\end{align*}
Applying H\"older's inequality, by \eqref{esbjm}, \eqref{propBMO}, \eqref{wgtcondcom} and Lemma \ref{lemHold}  we get
\allowdisplaybreaks
\begin{align}\label{comst1}
I_1 & \lesssim  \int_{2r}^{\infty}\int_{B(x_0,t)}
|b(y)-b_{B(x_0,t)}||f(y)|dy\frac{dt}{t^{n+1}}
\notag \\
&\quad +  \int_{2r}^{\infty}|b_{B(x_0,r)}-b_{B(x_0,t)}|
\int_{B(x_0,t)} |f(y)|dy\frac{dt}{t^{n+1}}
\notag \\
&\lesssim  \int_{2r}^{\infty}
\left\||b(\cdot)-b_{B(x_0,t)}|w^{-1}(\cdot)\right\|_{L^{\widetilde{\Phi}}_{w}(B(x_0,t))} \|f\|_{L^{\Phi}_{w}(B(x_0,t))}\frac{dt}{t^{n+1}}
\notag \\
& \quad +  \int_{2r}^{\infty}|b_{B(x_0,r)}-b_{B(x_0,t)}|
\|f\|_{L^{\Phi}_{w}(B(x_0,t))}\Phi^{-1}\big(w(B(x_0,t))^{-1}\big)\frac{dt}{t}
\notag \\
& \lesssim \|b\|_{*}
\int_{2r}^{\infty}\Big(1+\ln \frac{t}{r}\Big)
\|f\|_{L^{\Phi}_{w}(B(x_0,t))}\Phi^{-1}\big(w(B(x_0,t))^{-1}\big)\frac{dt}{t} \notag \\
&\lesssim \|b\|_{*} \|f\|_{M^{\Phi,\varphi_1}_{w}}
\int_{r}^{\i}\Big(1+\ln \frac{t}{r}\Big)\varphi_1(x_0,t)\frac{dt}{t}\notag \\
&\lesssim \|b\|_{*} \|f\|_{M^{\Phi,\varphi_1}_{w}}\varphi_2(x_0,r)<\infty.
\end{align}
In order to estimate $I_2$ note that $b\in BMO$ implies that $b(\cdot)-b_{B}$ is integrable on $B$, so $b(\cdot)-b_{B}$ is finite almost everywhere on $B$. From this fact, \eqref{sal00} and \eqref{wgtcondcom}, we get
\begin{align}\label{comst2}
I_2&\lesssim |b(x)-b_{B}|
\int_{2r}^{\infty}\|f\|_{L^{\Phi}_{w}(B(x_0,t))}\Phi^{-1}\big(w(B(x_0,t))^{-1}\big)\frac{dt}{t} \notag \\
&\lesssim \|f\|_{M^{\Phi,\varphi_1}_{w}} |b(x)-b_{B}|\int_{r}^{\infty}\varphi_1(x_0,t)\frac{dt}{t} \notag \\
&\lesssim \|f\|_{M^{\Phi,\varphi_1}_{w}} |b(x)-b_{B}|\varphi_2(x_0,r)<\infty.
\end{align}
Similar to the proof of Theorem \ref{sufsio} we can check that the definition is independent of the choice of $B$, the details being omitted.

Now, we show the boundedness.

Since $f_1\in L^{\Phi}_{w}(\Rn)$, by the boundedness of $[b,T]_{0}$ in $L^{\Phi}_{w}(\Rn)$ provided by Theorem \ref{thmwbcomorlc}, it follows that
\begin{equation}\label{es2com}
\|[b,T]f_1\|_{L^{\Phi}_{w}(B)}\leq \|[b,T]f_1\|_{L^{\Phi}_{w}(\Rn)}\lesssim
\|b\|_{*}\|f_1\|_{L^{\Phi}_{w}(\Rn)}=\|b\|_{*}\|f\|_{L^{\Phi}_{w}(2B)}.
\end{equation}
By combining \eqref{es1}, \eqref{es2com} and \eqref{es3}, we get the estimate
\begin{align}\label{es4ftgcom}
& \varphi_2(B)^{-1} \,
\Phi^{-1}\big(w(B)^{-1}\big) \, \|[b,T]_{0}f_1\|_{L^{\Phi}_{w}(B)}  \notag
\\
&\lesssim \|b\|_{*} \varphi_1(2B)^{-1} \,
\Phi^{-1}\big(w(2B)^{-1}\big) \, \|f\|_{L^{\Phi}_{w}(2B)}
\lesssim \|b\|_{*}\|f\|_{M^{\Phi,\varphi_1}_{w}}.
\end{align}

From \eqref{comst1} and \eqref{comst2} for all $x\in B$, we have
\begin{align}\label{gfvjzdhhscom}
  |[b,T]f(x)| &\leq |[b,T]_{0}f_{1}(x)|+\int_{\Rn}|K(x,y)(b(x)-b(y))f_{2}(y)dy| \notag \\
  & \lesssim |[b,T]_{0}f_{1}(x)| + \|b\|_{*} \|f\|_{M^{\Phi,\varphi_1}_{w}}\varphi_2(B)+ \|f\|_{M^{\Phi,\varphi_1}_{w}} |b(x)-b_{B}|\varphi_2(B).
\end{align}
Applying the norm $\|\cdot\|_{L^{\Phi}_{w}}$ on both sides of \eqref{gfvjzdhhscom}, then by \eqref{charorlw} and \eqref{Bmo-Worlicz} we get,
\begin{align*}
\|[b,T]f\|_{L^{\Phi}_{w}(B)}\lesssim& \|[b,T]_{0}f_1\|_{L^{\Phi}_{w}(B)} + \frac{\|b\|_{*} \|f\|_{M^{\Phi,\varphi_1}_{w}}\varphi_2(B)}{\Phi^{-1}\big(w(B)^{-1}\big)}
\\
& + \|f\|_{M^{\Phi,\varphi_1}_{w}} \|b(x)-b_{B}\|_{L^{\Phi}_{w}(B)}\varphi_2(B)
\\
&\lesssim\|[b,T]_{0}f_1\|_{L^{\Phi}_{w}(B)} + \frac{\|b\|_{*} \|f\|_{M^{\Phi,\varphi_1}_{w}}\varphi_2(B)}{\Phi^{-1}\big(w(B)^{-1}\big)} + \frac{\|b\|_{*} \|f\|_{M^{\Phi,\varphi_1}_{w}}\varphi_2(B)}{\Phi^{-1}\big(w(B)^{-1}\big)}
\\
&\approx \|[b,T]_{0}f_1\|_{L^{\Phi}_{w}(B)} + \frac{\|b\|_{*} \|f\|_{M^{\Phi,\varphi_1}_{w}}\varphi_2(B)}{\Phi^{-1}\big(w(B)^{-1}\big)}.
\end{align*}
Consequently, from \eqref{es4ftgcom} we have
\begin{align*}
&\varphi_2(B)^{-1} \,
\Phi^{-1}\big(w(B)^{-1}\big)\|[b,T]f\|_{L^{\Phi}_{w}(B)} \lesssim \varphi_2(B)^{-1} \,
\Phi^{-1}\big(w(B)^{-1}\big)\|[b,T]_{0}f_1\|_{L^{\Phi}_{w}(B)}
\\
& + \|b\|_{*}\|f\|_{M^{\Phi,\varphi_1}_{w}}  \lesssim \|b\|_{*}\|f\|_{M^{\Phi,\varphi_1}_{w}}.
\end{align*}
By taking supremum over $B\in\mathcal{B}$, we obtain the boundedness of $[b,T]$ from $M^{\Phi,\varphi_1}_{w}({\mathbb R}^n)$ to $M^{\Phi,\varphi_2}_{w}({\mathbb R}^n)$.
\end{proof}

If we take $\Phi(t)=t^p,~1\le p<\infty$ at Theorem \ref{sufsiocom} we get the following corollary for generalized weighted Morrey spaces which was proved in \cite{GulKarMustSer}.
\begin{cor}
Let $\varphi_1,\varphi_2$ be positive measurable functions satisfying the condition \eqref{es1} and \eqref{wgtcondcom}. Let also $1<p<\infty$, $w\in A_{p}$ and $b\in BMO$. Then the commutator of Calder\'{o}n-Zygmund operator $[b,T]$ is bounded from $M^{p,\varphi_1}_{w}({\mathbb R}^n)$ to $M^{p,\varphi_2}_{w}({\mathbb R}^n)$.
\end{cor}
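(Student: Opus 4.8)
The plan is to derive this corollary simply by specializing Theorem~\ref{sufsiocom} to the power Young function $\Phi(t)=t^p$, so that the whole argument reduces to checking that, for $1<p<\infty$ and $w\in A_p$, all hypotheses of Theorem~\ref{sufsiocom} are met and that the two spaces involved collapse to the expected weighted Morrey spaces.

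First I would fix $\Phi(t)=t^p$ and record that it is a Young function for $1<p<\infty$. For the type condition note the exact scaling $\Phi(st)=(st)^p=t^p\Phi(s)$ for all $s,t\ge 0$; comparing with \eqref{uplowtyp} (with constant $C=1$) shows that $\Phi$ is simultaneously of lower type $p$ and of upper type $p$. Hence I may take $p_0=p_1=p$ in Theorem~\ref{sufsiocom}, and the structural requirement $1<p_0\le p_1<\infty$ becomes exactly $1<p<\infty$, which is our hypothesis. (Equivalently, by Remark~\ref{remlowup}, the power function $\Phi(t)=t^p$ lies in $\Delta_2\cap\nabla_2$ precisely when $1<p<\infty$.) With this choice of $p_0$, the assumption $w\in A_{p_0}$ of Theorem~\ref{sufsiocom} is just $w\in A_p$, which is given, while $b\in BMO$ and the two conditions \eqref{es1} and \eqref{wgtcondcom} are assumed verbatim in the corollary.

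Next I would identify the function spaces. For $\Phi(t)=t^p$ one has $\Phi^{-1}(s)=s^{1/p}$, so the weighted Orlicz norm $\|\cdot\|_{L^\Phi_w}$ coincides with the weighted Lebesgue norm $\|\cdot\|_{L^p_w}$ and the weighting factor $\Phi^{-1}\big(w(B)^{-1}\big)$ becomes $w(B)^{-1/p}$; consequently the generalized weighted Orlicz--Morrey space $M^{\Phi,\varphi}_w$ reduces to the generalized weighted Morrey space $M^{p,\varphi}_w$, as already recorded in the Example following Definition~\ref{weigenOrlMor}. Applying this identification to both $\varphi_1$ and $\varphi_2$, Theorem~\ref{sufsiocom} yields the boundedness of $[b,T]$ from $M^{p,\varphi_1}_w$ to $M^{p,\varphi_2}_w$, which is the claim.

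There is essentially no genuine obstacle here; the only point deserving a moment of care is the admissibility of taking $p_0=p_1=p$. One must observe that the defining inequality \eqref{uplowtyp} for types holds for the power function with a single constant on both ranges $t\in[0,1]$ and $t\in[1,\infty)$, so that the gap $p_0\le p_1$ is realized as an equality, and that the strict lower bound $p_0>1$ forces the restriction $p>1$, matching the hypothesis $1<p<\infty$ (and explaining why, in contrast to the weak-type corollaries, the endpoint $p=1$ is excluded).
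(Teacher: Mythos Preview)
Your proposal is correct and follows exactly the paper's own route: the corollary is obtained directly by specializing Theorem~\ref{sufsiocom} to the Young function $\Phi(t)=t^{p}$, so that $p_0=p_1=p$, $w\in A_{p_0}$ becomes $w\in A_p$, and $M^{\Phi,\varphi_i}_{w}=M^{p,\varphi_i}_{w}$. Your verification that $\Phi(t)=t^{p}$ is of lower and upper type $p$ (forcing $p>1$) is the only detail the paper leaves implicit.
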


\

\section{Vector-valued inequality for Calder\'{o}n-Zygmund operators}
In this section we shall consider the vector-valued inequality for Calder\'{o}n-Zygmund operators on generalized weighted Orlicz-Morrey spaces.

\begin{defn}
Let $\varphi$ be a positive measurable function
on $\Rn\times (0,\i)$, $w$ be a non-negative measurable function on $\Rn$, $\Phi$ be any Young function and $1\le q \le \infty$. The generalized vector-valued weighted Orlicz-Morrey spaces $M^{\Phi,\varphi}_{w}(l_{q})=M^{\Phi,\varphi}_{w}(l_{q},\Rn)$ is defined as the set of all sequences
$F=\{ f_j \}_{j=1}^{\infty}$ of Lebesgue measurable functions on $\Rn$ such that
$$
\left\| F \right\|_{M^{\Phi,\varphi}_{w}(l_{q})}=\left\| \{ f_j \}_{j=1}^{\infty} \right\|_{M^{\Phi,\varphi}_{w}(l_{q})} : =
\left\| \left\| \{ f_j(\cdot) \}_{j=1}^{\infty} \right\|_{l_{q}}\right\|_{M^{\Phi,\varphi}_{w}}<\infty.
$$
\end{defn}

The proof of the following vector-valued modular inequality for Calder\'{o}n-Zygmund operator $T$ in weighted Orlicz spaces can be found in \cite[Theorem 5.1]{GogKrb1995}.
\begin{prop}\label{VecMaxweigOrlcold}
Let $1<q<\infty$, $\Phi$ be a Young function with $\Phi\in\Delta_2\cap\nabla_2$.
Assume in addition $w \in A_{i_{\Phi}}$. Then, for any family of locally integrable functions $F=\{ f_j \}_{j=1}^{\infty}$,
\begin{equation*}
\int_{\Rn}\Phi\left(\|TF(x)\|_{l_q}\right)w(x)dx\leq C \int_{\Rn}\Phi\left(\|F(x)\|_{l_q}\right)w(x)dx
\end{equation*}
for some $C>0$ independent of $F$, where $TF=\{ T f_j \}_{j=1}^{\infty}$.
\end{prop}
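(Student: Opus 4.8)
The plan is to obtain the vector-valued modular inequality from three ingredients, all either already available in the excerpt or entirely classical: the scalar modular inequality of Theorem~\ref{SIOweigOrlcold}, the classical weighted $L^p$ theory for Calder\'{o}n--Zygmund operators, and the Rubio de Francia extrapolation theorem, which automatically transfers scalar weighted estimates into $l_q$-valued ones. It is worth recalling at the outset that, by Remark~\ref{remlowup}, the hypothesis $\Phi\in\Delta_2\cap\nabla_2$ means exactly that $\Phi$ is of lower type $p_0$ and upper type $p_1$ for some $1<p_0\le p_1<\i$, with $\widetilde{\Phi}$ enjoying the analogous two-sided type control; these type properties are precisely the structural input that the modular arguments exploit.

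First I would establish the vector-valued weighted estimates: for every $1<q<\i$, every $1<p<\i$ and every $w\in A_p$,
$$
\big\|\,\|TF\|_{l_q}\,\big\|_{L^p_w(\Rn)}\le C\,\big\|\,\|F\|_{l_q}\,\big\|_{L^p_w(\Rn)},
$$
together with the endpoint $\big\|\,\|TF\|_{l_q}\,\big\|_{WL^1_w}\lesssim\big\|\,\|F\|_{l_q}\,\big\|_{L^1_w}$ for $w\in A_1$. The starting point is the classical fact that $T$ is bounded on $L^p_w(\Rn)$ for every $w\in A_p$; feeding this family of scalar weighted estimates into the vector-valued form of Rubio de Francia extrapolation produces the $l_q$-valued inequalities above for all $p,q\in(1,\i)$ and all $w\in A_p$, as well as the $A_1$ weak-type endpoint. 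Equivalently, one may view $T\otimes\mathrm{Id}_{l_q}$ as a Calder\'{o}n--Zygmund operator with values in the space $l_q$ and invoke the Benedek--Calder\'{o}n--Panzone theory together with weighted extrapolation; the weak $(1,1)$ endpoint then comes from an $l_q$-valued Calder\'{o}n--Zygmund decomposition.

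With these vector-valued weighted bounds in hand, I would run the proof of Theorem~\ref{SIOweigOrlcold} (cf. \cite[Theorem~3.2]{GogKrb1995}) verbatim, with the scalar operator $f\mapsto Tf$ replaced by the sublinear operator $G\mapsto\|TG\|_{l_q}$ and with $\|G\|_{l_q}$ in the role of $|f|$ on the right-hand side. That argument uses no property of $T$ beyond its weighted weak $(1,1)$ bound and its weighted $L^p$ bounds for $p$ in a range governed by the types of $\Phi$, together with the $\Delta_2\cap\nabla_2$ (equivalently, two-sided type) properties of $\Phi$ and the $A_\infty$ properties of $w$; by the previous paragraph and Remark~\ref{remlowup} all of these are available for $G\mapsto\|TG\|_{l_q}$ as well. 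This yields $\int_{\Rn}\Phi\big(\|TF\|_{l_q}\big)\,w(x)\,dx\le C\int_{\Rn}\Phi\big(\|F\|_{l_q}\big)\,w(x)\,dx$ with $C$ independent of $F$.

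The main obstacle is conceptual rather than computational: there is no pointwise domination of $\|TF\|_{l_q}$ by a maximal expression in $\|F\|_{l_q}$, so the vector-valued inequality is genuinely an $l_q$-valued statement and cannot be read off from Theorem~\ref{SIOweigOrlcold} by a mere pointwise comparison. The resolution is that Rubio de Francia extrapolation makes the passage from the scalar weighted theory of $T$ to its $l_q$-valued weighted theory essentially free, after which the modular machinery of \cite{GogKrb1995} is indifferent to whether the underlying operator is scalar- or vector-valued. (Alternatively, one may simply cite \cite[Theorem~5.1]{GogKrb1995}, where exactly this vector-valued modular inequality is proved.)
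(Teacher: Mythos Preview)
The paper does not give its own proof of this proposition; it simply records that the result is \cite[Theorem~5.1]{GogKrb1995}. Your proposal is correct and, in its final parenthetical remark, lands on exactly the same citation. The sketch you give---pass from the scalar weighted $L^p$ theory of $T$ to the $l_q$-valued weighted $L^p$ theory via Rubio de Francia extrapolation (or Benedek--Calder\'on--Panzone), then feed those bounds into the modular argument of \cite{GogKrb1995}---is a faithful outline of how Theorem~5.1 there is obtained, so there is no substantive divergence from the paper.
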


The proof of the following lemma can be found in \cite[Lemma 6]{HakNakSaw}.
\begin{lem}\label{lem:131205-199}
For any ball $B$,
we have
$$
\int_{\mathbb{R}^n\backslash 2B}|K(x,y)f(y)|dy
\lesssim
\sum\limits_{k=1}^{\infty}
\frac{1}{|2^{k+1} B|} \int_{2^{k+1} B}
|f(y)| dy
$$
for all $x\in B$.
\end{lem}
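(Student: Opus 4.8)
The plan is to reduce the integral over $\mathbb{R}^n\setminus 2B$ to a sum over dyadic annuli centered at the center $x_0$ of $B=B(x_0,r)$, using only the size estimate (i) on the kernel $K$. The first step is the elementary geometric observation that for $x\in B$ and $y\in\mathbb{R}^n\setminus 2B$ one has $|x_0-y|\ge 2r>|x-x_0|$, so that $|x-y|\ge|x_0-y|-|x-x_0|\ge\tfrac12|x_0-y|$; combined with $|K(x,y)|\le C|x-y|^{-n}$ this yields the center-based bound $|K(x,y)|\lesssim|x_0-y|^{-n}$ uniformly in $x\in B$ and $y\notin 2B$.

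Next I would split $\mathbb{R}^n\setminus 2B=\bigcup_{k\ge1}\bigl(2^{k+1}B\setminus 2^kB\bigr)$ and estimate the integral over each annulus separately. On the $k$-th annulus one has $|x_0-y|\ge 2^kr$, hence $|x_0-y|^{-n}\le(2^kr)^{-n}\approx|2^{k+1}B|^{-1}$, the last equivalence because $|2^{k+1}B|=v_n(2^{k+1}r)^n$. Summing the resulting bounds,
\[
\int_{\mathbb{R}^n\setminus 2B}|K(x,y)f(y)|\,dy\lesssim\sum_{k=1}^{\infty}\int_{2^{k+1}B\setminus 2^kB}\frac{|f(y)|}{|x_0-y|^{n}}\,dy\lesssim\sum_{k=1}^{\infty}\frac{1}{|2^{k+1}B|}\int_{2^{k+1}B}|f(y)|\,dy,
\]
where in the last step each annulus is enlarged to the full ball $2^{k+1}B$; this is exactly the asserted inequality.

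I do not anticipate any genuine obstacle, since the argument relies on nothing beyond the kernel size condition and elementary geometry. The two points that need care are: justifying the comparison $|x-y|\approx|x_0-y|$ \emph{before} replacing $x$ by $x_0$ in the kernel bound, and checking that the implied constants in $(2^kr)^{-n}\approx|2^{k+1}B|^{-1}$ are independent of $k$, $x$, $B$ and $f$. As an alternative, one could avoid the annular decomposition entirely by the layer-cake/Fubini manipulation $|x_0-y|^{-n}\approx\int_{|x_0-y|}^{\infty}t^{-n-1}\,dt$ that is used elsewhere in the paper, but the dyadic splitting is the most transparent route to the precise form of the bound stated in the lemma.
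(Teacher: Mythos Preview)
Your argument is correct: the triangle inequality gives $|x-y|\ge\tfrac12|x_0-y|$ for $x\in B$ and $y\notin 2B$, and the dyadic annular decomposition together with $(2^kr)^{-n}\approx|2^{k+1}B|^{-1}$ (with constants depending only on $n$) yields exactly the stated bound.

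Note, however, that the paper does not supply its own proof of this lemma; it simply cites \cite[Lemma~6]{HakNakSaw}. Your dyadic splitting is the standard way to establish the statement and is undoubtedly what appears in the cited reference. The alternative you mention---the Fubini/layer-cake identity $|x_0-y|^{-n}\approx\int_{|x_0-y|}^{\infty}t^{-n-1}\,dt$---is precisely the device the present paper uses in the proof of Theorem~\ref{sufsio} (see \eqref{sal00}) to arrive at a continuous-parameter version of the same estimate. The two approaches are essentially equivalent; the dyadic one lands directly on the discrete sum in the lemma, while the integral version is more convenient when one wants to feed the result into a condition such as \eqref{wgtcond}.
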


\begin{thm} \label{3.4.VectV}
Let $1<q<\infty$, $\Phi$ be a Young function with $\Phi\in\Delta_2\cap\nabla_2$, $w \in A_{i_{\Phi}}$ and $(\varphi_1,\varphi_2)$ satisfy the conditions \eqref{es1} and
\eqref{wgtcond}. Then the operator $T$ is bounded from $M^{\Phi,\varphi_1}_{w}(l_{q})$ to $M^{\Phi,\varphi_2}_{w}(l_{q})$,
i.e., there is a constant $C>0$ such that
\begin{equation}\label{vvinqinf}
\|T F\|_{M^{\Phi,\varphi_2}_{w}(l_{q})} \le C \|F\|_{M^{\Phi,\varphi_1}_{w}(l_{q})}
\end{equation}
holds for all $F=\{ f_j\}_{j=1}^{\infty} \in M^{\Phi,\varphi_1}_{w}(l_{q})$.
\end{thm}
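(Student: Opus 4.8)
The plan is to mimic the scalar-valued proof of Theorem~\ref{sufsio}, replacing absolute values by $l_q$-norms throughout and using Proposition~\ref{VecMaxweigOrlcold} in place of Theorem~\ref{SIOweigOrlcold}. First I would fix $F=\{f_j\}_{j=1}^\infty\in M^{\Phi,\varphi_1}_w(l_q)$, a point $x\in\Rn$, and a ball $B=B(x_0,r)$ containing $x$, and split each $f_j=f_j\chi_{2B}+f_j\chi_{\Rn\setminus 2B}=:f_{j,1}+f_{j,2}$, writing $F_1=\{f_{j,1}\}$, $F_2=\{f_{j,2}\}$. As in \eqref{defsinmor} one defines $Tf_j(x):=T_0f_{j,1}(x)+\int_{\Rn}K(x,y)f_{j,2}(y)\,dy$; the well-definedness of the first term follows from Proposition~\ref{VecMaxweigOrlcold} (which gives boundedness of $T$ on $L^\Phi_w(l_q)$ after the usual passage from the modular inequality to the norm inequality, cf.\ Remark~\ref{remwmi}), and the almost-everywhere absolute convergence and the independence of the choice of $B$ are checked exactly as in the scalar case, term by term in $j$.

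The heart of the matter is the tail estimate. For $x\in B$ one has $\|\{f_{j,2}(y)\}\|_{l_q}=\|F(y)\|_{l_q}\chi_{\Rn\setminus 2B}(y)$, and by the kernel size bound (i) together with $\tfrac12|x_0-y|\le|x-y|\le\tfrac32|x_0-y|$,
\begin{equation*}
\Big\|\Big\{\int_{\Rn}K(x,y)f_{j,2}(y)\,dy\Big\}_{j}\Big\|_{l_q}
\le \int_{\Rn}|K(x,y)|\,\|F(y)\|_{l_q}\chi_{\Rn\setminus 2B}(y)\,dy
\lesssim \int_{\dual{(2B)}}\frac{\|F(y)\|_{l_q}}{|x_0-y|^n}\,dy,
\end{equation*}
where I have used Minkowski's integral inequality for $l_q$ to pull the $l_q$-norm inside the integral. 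Now the scalar function $g(y):=\|F(y)\|_{l_q}$ belongs to $L^{\Phi}_{w,{\rm loc}}(\Rn)$ with $\|g\|_{L^\Phi_w(B(x_0,t))}=\|F\|_{L^\Phi_w(l_q)(B(x_0,t))}$, so the Fubini argument and Lemma~\ref{lemHold} apply verbatim and give, as in \eqref{sal00}--\eqref{sal00xyz},
\begin{equation*}
\int_{\dual{(2B)}}\frac{\|F(y)\|_{l_q}}{|x_0-y|^n}\,dy
\lesssim \int_{2r}^{\infty}\|F\|_{M^{\Phi,\varphi_1}_w(l_q)}\,\varphi_1(x_0,t)\frac{dt}{t}\cdot\frac{1}{\text{(normalization)}}
\lesssim \|F\|_{M^{\Phi,\varphi_1}_w(l_q)}\,\varphi_2(x_0,r),
\end{equation*}
using condition \eqref{wgtcond}.

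Assembling the pieces, from the tail bound one gets the pointwise estimate $\|TF(x)\|_{l_q}\lesssim \|TF_1(x)\|_{l_q}+\|F\|_{M^{\Phi,\varphi_1}_w(l_q)}\,\varphi_2(B)$ for $x\in B$, the analogue of \eqref{gfvjzdhhs}. Taking the $L^\Phi_w(B)$-norm and using \eqref{charorlw}, then the $L^\Phi_w(l_q)$-boundedness of $T$ (so that $\|TF_1\|_{L^\Phi_w(l_q)}\lesssim\|F_1\|_{L^\Phi_w(l_q)}=\|F\|_{L^\Phi_w(l_q)(2B)}$), together with \eqref{es1} and \eqref{es3}, yields
\begin{equation*}
\varphi_2(B)^{-1}\Phi^{-1}\big(w(B)^{-1}\big)\|\,\|TF(\cdot)\|_{l_q}\,\|_{L^\Phi_w(B)}\lesssim \|F\|_{M^{\Phi,\varphi_1}_w(l_q)},
\end{equation*}
and taking the supremum over $B\in\mathcal{B}$ gives \eqref{vvinqinf}. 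I expect the only genuine obstacle to be the passage from the vector-valued modular inequality of Proposition~\ref{VecMaxweigOrlcold} to the corresponding norm inequality on $L^\Phi_w(l_q)$ and the justification that $TF_1\in L^\Phi_w(l_q)$; this is handled by the same homogeneity/scaling argument as in Remark~\ref{remwmi} applied to the sublinear map $F\mapsto\|TF(\cdot)\|_{l_q}$, using $\Phi\in\Delta_2\cap\nabla_2$. Everything else is a routine transcription of the scalar proof with $|\cdot|$ replaced by $\|\cdot\|_{l_q}$ and one application of Minkowski's integral inequality.
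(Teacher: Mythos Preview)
Your proposal is correct and follows the same overall architecture as the paper's proof: the identical local/tail splitting, Proposition~\ref{VecMaxweigOrlcold} for $F_1$, the kernel size bound plus Lemma~\ref{lemHold} and \eqref{wgtcond} for $F_2$, and the final assembly via \eqref{es1}, \eqref{es3}, and \eqref{charorlw}. The only noteworthy technical difference is in the tail step: you invoke Minkowski's integral inequality to pull the $l_q$-norm inside the kernel integral and then run the scalar Fubini argument on $g=\|F(\cdot)\|_{l_q}$, whereas the paper first applies the dyadic Lemma~\ref{lem:131205-199} componentwise and then uses $\ell^{q}$--$\ell^{q'}$ duality together with H\"older to arrive at the same integral of $\|F(\cdot)\|_{l_q}$; your route is slightly more direct and avoids the extra lemma, but the two are equivalent in substance.
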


\begin{proof}
We split $F=\{f_j\}_{j=1}^{\i}$ with
\begin{align*}
&F=F_1+F_2, ~~~~
F_1=\{ f_{j,1}\}_{j=1}^{\infty}, ~~~~
F_2=\{ f_{j,2}\}_{j=1}^{\i},
\\
&f_{j,1}(y)=f_j(y)\chi_{B(z,2r)}(y),\quad
 f_{j,2}(y)=f_j(y)\chi_{\Rn\backslash B(z,2r)}(y),
\quad r>0. \notag
\end{align*}

Let $x \in B=B(z,r)$ be fixed. Inspired by the ideas of \cite{HakNakSaw}, from Lemma \ref{lem:131205-199} and $\ell^{q}-\ell^{q^{\prime}}$ duality, we have
\begin{equation*}
\begin{split}
\|TF_2(x)\|_{l_q} & \lesssim \left(\sum\limits_{j=1}^{\infty}\left(\sum\limits_{k=1}^{\infty}
\frac{1}{|2^{k+1} B|}\int_{2^{k+1} B}|f_j(y)|dy\right)^q\right)^{1/q}\\
&=\sum\limits_{j=1}^{\infty}\sum\limits_{k=1}^{\infty}
\frac{a_j}{|2^{k+1} B|}\int_{2^{k+1} B}|f_j(y)|dy,
\end{split}
\end{equation*}
where $\{a_j\}_{j=1}^{\infty}$ is an $\ell^{q^{\prime}}$ sequence with $\|a_j\|_{l^{q^{\prime}}}=1$.

We use H\"{o}lder's inequality to obtain
\begin{equation*}
\begin{split}
\|TF_2(x)\|_{l_q} & \lesssim \sum\limits_{k=1}^{\infty}
\frac{1}{|2^{k+1} B|}\int_{2^{k+1} B}\sum\limits_{j=1}^{\infty} a_j|f_j(y)|dy\\
&\lesssim \sum\limits_{k=1}^{\infty}
\frac{1}{|2^{k+1} B|}\int_{2^{k+1} B}\|f_j(y)\|_{l^q}dy\\
&\lesssim \int_{2r}^\infty \frac{\|\;\|F(\cdot)\|_{l_q}\|_{L^1(B(z,t))}}{t^{n+1}}\,dt.
\end{split}
\end{equation*}
Hence by Lemma \ref{lemHold} and \eqref{wgtcond}, we get for all $x\in B$
\begin{align}\label{vingrep2}
\|TF_2(x)\|_{l_q} & \lesssim \int_{2r}^\infty \Phi^{-1}(w(B(z,t))^{-1}) \|\;\|F\|_{l_q}\|_{L^{\Phi}_{w}(B(z,t))}\frac{dt}{t}\notag\\
&\lesssim \|F\|_{M^{\Phi,\varphi_1}_{w}(l_{q})}\int_{r}^\infty \varphi_{1}(z,t)\frac{dt}{t}\notag\\
&\lesssim \|F\|_{M^{\Phi,\varphi_1}_{w}(l_{q})}\varphi_{2}(z,r).
\end{align}

By Proposition \ref{VecMaxweigOrlcold} we have
\begin{align} \label{GAzel1}
\|\;\|T F_1\|_{l_q}\|_{L^{\Phi}_{w}(B)} & \le
\|\;\|T F_1\|_{l_q}\|_{L^{\Phi}_{w}} \notag
\\
& \le C \|\;\|F_1\|_{l_q}\|_{L^{\Phi}_{w}} = C \|\;\|F\|_{l_q}\|_{L^{\Phi}_{w}(2B)},
\end{align}
where $C>0$ is independent of the vector-valued function $F$.

By combining \eqref{es1}, \eqref{es3} and \eqref{GAzel1}, we get the estimate
\begin{align}\label{es4ftgvv}
& \varphi_2(B)^{-1} \,
\Phi^{-1}\big(w(B)^{-1}\big) \, \|\;\|T F_1\|_{l_q}\|_{L^{\Phi}_{w}(B)} \notag
\\
& \lesssim \varphi_1(2B)^{-1} \,
\Phi^{-1}\big(w(2B)^{-1}\big) \, \|\;\|F\|_{l_q}\|_{L^{\Phi}_{w}(2B)}
\lesssim\|F\|_{M^{\Phi,\varphi_1}_{w}(l_{q})}.
\end{align}

From \eqref{vingrep2} for all $x\in B$, we have
\begin{align}\label{gfvjzdhhsvv}
  \|T F(x)\|_{l_q} &\leq \|T F_1(x)\|_{l_q}+\|TF_2(x)\|_{l_q} \notag \\
  & \lesssim \|T F_1(x)\|_{l_q} + \|F\|_{M^{\Phi,\varphi_1}_{w}(l_{q})}\,\varphi_2(B).
\end{align}
Applying the norm $\|\cdot\|_{L^{\Phi}_{w}}$ on both sides of \eqref{gfvjzdhhsvv}, then by \eqref{charorlw} we get,
\begin{equation*}
\|\;\|T F\|_{l_q}\|_{L^{\Phi}_{w}(B)}\lesssim \|\;\|T F_1\|_{l_q}\|_{L^{\Phi}_{w}(B)} + \frac{\|F\|_{M^{\Phi,\varphi_1}_{w}(l_{q})}\,\varphi_2(B)}{\Phi^{-1}\big(w(B)^{-1}\big)}.
\end{equation*}
Consequently, by \eqref{es4ftgvv} we have
\begin{align*}
&\varphi_2(B)^{-1} \,
\Phi^{-1}\big(w(B)^{-1}\big)\|\;\|T F\|_{l_q}\|_{L^{\Phi}_{w}(B)} \notag \\
&~~~~\lesssim \varphi_2(B)^{-1} \,
\Phi^{-1}\big(w(B)^{-1}\big)\|\;\|T F_1\|_{l_q}\|_{L^{\Phi}_{w}(B)}+ \|F\|_{M^{\Phi,\varphi_1}_{w}(l_{q})}  \lesssim \|F\|_{M^{\Phi,\varphi_1}_{w}(l_{q})}.
\end{align*}
By taking supremum over $B\in\mathcal{B}$, we obtain the boundedness of $T$ from $M^{\Phi,\varphi_1}_{w}(l_{q})$ to $M^{\Phi,\varphi_2}_{w}(l_{q})$.
\end{proof}

\

\end{document}